\pgfplotsset{compat=newest}
\newtheorem{teo}{Theorem}[section]
\newtheorem{prop}[teo]{Proposition}
\newtheorem{defin}[teo]{Definition}
\newtheorem{remark}[teo]{Remark}
\newtheorem{lemma}[teo]{Lemma}
\newcommand{\dual}{L^{2}(0,T,W^{-1,2}(\Omega))}
\newcommand{\pspacegen}{L^p\big(0,T,X\big)}
\newcommand{\pspacegeninfty}{L^\infty\left(0,T,X\right)}
\newcommand{\parspgencont}{C^0\left([0,T],X\right)}
\newcommand{\LBoch}[5]{L^{#1}\left(#2,#3 ; L^{#4} (#5)\right)}
\newcommand{\Boch}[6]{L^{#1}\left(#2,#3 ; W_0^{#4,#5} (#6)\right)}
\newcommand{\dx}{\,\mathrm d }
\newcommand{\Om}{\Omega}
\DeclareMathOperator*{\esssup}{ess\sup}
\def\elle#1{L^{#1}(\Omega)}
\def\div{{\rm div}}
\def\elle#1{L^{#1}(\Omega)}
\def\io{\int_{\Omega}}
\def\iqT{\int_{\Omega_{T}}}
\def\norma#1#2{\|#1\|_{\lower 4pt \hbox{$\scriptstyle #2$}}}
\def\un{u_n}
\def\finedim
\def\gw{G_{\tilde{k}}(w_n)}
\def\elle#1{L^{#1}(\Omega)}
\def\eps{\varepsilon}
\def\be{\begin{equation}}
\def\ee{\end{equation}}
 \numberwithin{equation}{section}
\title[Well-posedness results for superlinear Fokker-Planck equations]{Well-posedness results for superlinear Fokker-Planck equations}
	\author[Stefano Buccheri]{Stefano Buccheri}
\thanks{S.B. -  Dipartimento di Matematica e Applicazioni ``R.
Caccioppoli",  Universit\`a  degli Studi di Napoli Federico II,
Via Cintia, 80126, Napoli,
Italy, stefano.buccheri@unina.it}
\author{ Fernando Farroni}
\thanks{F.F. -  Dipartimento di Matematica e Applicazioni ``R.
Caccioppoli",  Universit\`a  degli Studi di Napoli Federico II,
Via Cintia, 80126, Napoli,
Italy, fernando.farroni@unina.it}
\author{ Gabriella Zecca}
\thanks{G.Z. - Dipartimento di Matematica e Applicazioni ``R.
Caccioppoli",  Universit\`a  degli Studi di Napoli Federico II,
Via Cintia, 80126, Napoli,
Italy, g.zecca@unina.it}
\begin{document}

\maketitle

\begin{abstract}
In this manuscript we deal with a class of nonlinear Fokker-Planck equations with the following structure
\[
\partial_t u - \div\big(M\nabla u+ E h(u)\big)=0,
\]
with $M$ a bounded elliptic matrix,  $E$ a vector field in a suitable Lebesgue space, and $h(u)$ featuring a superlinear growth for $u$ large. We provide existence results of $C([0,T),L^1)$ distributional solutions to initial-boundary value problems related to the equation above together with some qualitative properties of solutions.
\end{abstract}

\medskip

\tableofcontents

\noindent {\emph{Keywords}}:  Nonlinear parabolic equations, superlinear drift, noncoercive problems.
 
  \noindent{\emph{Mathematics Subject Classification 2000:} } 35K20, 35K55, 35Q84. 
  
\section{Introduction}
In this manuscript we deal with a class of nonlinear Fokker-Planck equations with the following structure
\begin{equation}\label{KQ}
\partial_t u - \div\big(M\nabla u+ E h(u)\big)=0,
\end{equation}
with $M$ a bounded elliptic matrix,  $E$ a vector field in a suitable Lebesgue space, and $h(u)$ featuring a superlinear growth for $u$ large. 

Taking $M$ the identity matrix, $E(x)=x$, and $h(u)=u(1+u)$, we recover the equation proposed in \cite{KQ1,KQ2} to describe the dynamic of Bose-Einstein particles, where the quadratic drift term accounts for an extra concentration phenomenon due to quantum effects. Under this specific set of assumptions, equation \eqref{KQ} has been addressed using approaches based on moments estimates or exploiting its variational structure; see for instance \cite{CCLR,CdFT,Hop,toscani} and references therein. Among these results, let us stress that Toscani showed in \cite{toscani} that, if the $L^1$ norm of the initial datum is large enough, finite time blow up occurs and measured valued solutions appear. Therefore, existence of global in time smooth solutions is not expected in general for \eqref{KQ}. 
However, the previously mentioned results hold either for specific choices for the vector field $E$ and the matrix $M$ or under strong structural assumptions on it (for instance $E=\nabla \phi$) and does not seem that they can be immediately adapted to more general cases. Let us also quote \cite{oned2,oned1}, where a careful analysis of a initial-boundary value problem associated to \eqref{KQ} is carried out in $\mathbb{R}$.\\

Our contribution here is to set up a well posedness theory for $C([0,T),L^1)$ solutions to initial-boundary values problem related to \eqref{KQ} under fairly general assumptions on the coefficients. The approach we follow is entirely not variational, it does not need specific assumptions on the structure of the vector field $E$, and it does not make use of any representation formula. Let us stress that here we will not deal with measured valued solutions (for more details on the topic see for instance \cite{Hop}). We will rather keep our analysis either in cases in which global existence holds, or proposing local in time results, namely before that the possible \emph{condensation} of the solution to a measure may occur (see again \cite{toscani}).\\

For us $\Omega$ is an open bounded subset of $\mathbb{R}^N$, with $N\ge3$, and  $\Omega_T$ denotes the space-time cylinder $(0,T)\times\Om$ for any given $T>0$. The measurable matrix $M(t,x)$ satisfies, for almost every $(t,x)\in \Omega_T$ and for every $\xi\in\mathbb R^N $, the following conditions
\begin{equation}\label{mcon}
\alpha |\xi|^2\leq 
  M(t,x)\,\xi \xi, \quad |M(t,x)| \leq\beta,\\  
\end{equation}
for two positive constants $0<\alpha\le\beta$, while the vector field $E:\Omega\to \mathbb{R}^N$ and the function $u_0:\Omega\to \mathbb{R}$ belong to suitable Lebesgue spaces. We focus on the following problem
\begin{equation}\label{introintro}
\begin{cases}
\partial_t u-\div\left(M(t,x)\nabla u +E(t,x) |u|^{\theta} u \right)=0 & \quad  \mbox{in } \Omega _T,\\
u = 0 & \quad \mbox{on }  (0,T) \times \partial \Omega ,\\
u(x,0)=u_0(x) & \quad \mbox{in } \Omega.
\end{cases}
\end{equation}
 The main assumption here is that $\theta >0$, namely the convection term is superlinear. Our approach also works for nonlinearities that satisfy $|h(u)|\le C(1+|u|^{\theta+1})$, we have chosen $h(u)=|u|^{\theta}u$ for clarity of exposition. The notion of weak solution that we consider is given in Definition \ref{defsol1} below.\\

For $\theta =0$ existence and regularity properties of the differential operator 
\[
u\to -\div\big(M(t,x)\nabla u +E(t,x)  u \big)
\]
have been studied both in the stationary \cite{BOMbis,lucio, buccheri,cirmibis,  delvecchio,FGMZcv, Mos} and in the evolutionary framework \cite{bop,BOP,nodea,anona,FAMO18,por}. For superlinear drift  we refer to \cite{BBC} (see also \cite{fessel,daSilva}). The leitmotiv of these works is that the (super)linear drift makes the differential operator not coercive and therefore the usual strategies to get a priori estimate fail. Depending on the assumptions, the effect of the lower order term may be balanced by the second order elliptic operator (see for instance \cite{lucio}) or may represent an obstruction to existence/regularity of solutions (see for instance  \cite{BBC}).

Let also mention that if the drift term is on the form $-$div$(\Phi(u))$ it is possible to obtain existence of \emph{renormalized} solution for any continuous $\Phi:\Omega\to \mathbb{R}^N$ with no growth conditions at infinity, see \cite{BGDM,porr1999}. However these results do no apply in our contest of space dependent drifts. \\

At first we provide global in time existence and uniqueness of solutions for $\theta$ below a given threshold, namely,
\begin{equation}\label{thetasmall}
0<\theta\le \frac{1}{N}.
\end{equation}
\begin{teo}\label{thetasmalf=0}
Let us assume \eqref{mcon}, \eqref{thetasmall}, and take $r\in(N,\infty]$ such that
\be \label{thetasmallbis}
\frac1r+\theta=\frac{1}{N}.
\ee
Then, for any given $T>0$, $u_0\in L^\mu(\Omega)$ with $\mu\ge1$, and $|E|\in L^{\infty}((0,\infty);L^r(\Omega))$, there exists a unique weak solution  $u\in C([0,T];L^1(\Omega))\cap L^2_{loc}((0,T); W^{1,2}_0(\Omega))$ to \eqref{introintro} such that 
\be\label{intro09:36bis}
\|u(t)\|_{\elle{\mu}}\le C_1 \|u_0\|_{\elle{\mu}}  \qquad \mbox{ for }t\in (0,T),
\ee
with $C_1=C_1(\alpha, N, \theta, \mu, r, T, E, \|u_0\|_{\elle 1})$.
Moreover for any $m\ge\mu$ there exists $ C_2= C_2(\alpha, N, \theta, \mu, m, r,T,E,\|u_0\|_{\elle1})$  such that 
\be\label{contractiveintro1}
\|u(t)\|_{L^{m} (\Omega )}\leq \|u_0\|_{L^{\mu}(\Omega)} \frac{ C_2}{t^{\frac N2(\frac{1}{\mu}-\frac 1m) } },\qquad \mbox{ for }t\in (0,T).
\ee
\end{teo}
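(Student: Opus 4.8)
The plan is to build the solution by an approximation–compactness scheme, obtaining the $L^\mu$ bound \eqref{intro09:36bis} and the smoothing estimate \eqref{contractiveintro1} uniformly on the approximating sequence and passing to the limit. First I would regularize the problem: truncate the nonlinearity and the vector field, replacing $|u|^\theta u$ by $T_n(|u|^\theta u)$ (or using $h_n$ bounded) and $E$ by a bounded field $E_n$ with $E_n \to E$ in $\LBoch{\infty}{0}{\infty}{r}{\Om}$, and smooth the datum $u_0$ by $u_{0,n} \in L^\infty(\Om)$ with $u_{0,n}\to u_0$ in $L^\mu(\Om)$. For each fixed $n$ the regularized operator is coercive with bounded drift, so classical parabolic theory (e.g.\ Galerkin or the results for $\theta=0$ quoted in the excerpt) yields a bounded weak solution $u_n \in C([0,T];L^2(\Om))\cap L^2(0,T;W^{1,2}_0(\Om))$.

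The heart of the matter is the uniform a priori estimates, and here the key device is testing with powers of $u_n$. To obtain the $L^\mu$ bound I would test the equation with $|u_n|^{\mu-2}u_n$ (suitably truncated to make it admissible, then remove the truncation). This produces on the left a term controlling $\frac{d}{dt}\|u_n(t)\|_{L^\mu}^\mu$ plus a coercive gradient term of the form $c\int |\nabla |u_n|^{\mu/2}|^2$, while the drift contributes a term
\[
\int_{\Om} E_n \,|u_n|^\theta u_n \cdot \nabla\big(|u_n|^{\mu-2}u_n\big)\dx,
\]
which after rearranging behaves like $\int |E_n|\,|u_n|^{\theta+\mu/2}\,|\nabla |u_n|^{\mu/2}|$. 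The crucial point is that the condition $\frac1r+\theta=\frac1N$ is exactly what lets me absorb this drift term: by Hölder in space with exponents $r$, $2$, and the critical Sobolev exponent $2^*=\frac{2N}{N-2}$ applied to $|u_n|^{\mu/2}$, the factor $|u_n|^{\theta+\mu/2}$ is controlled by $\||u_n|^{\mu/2}\|_{L^{2^*}}^{1+2\theta/\mu}\|u_n\|_{L^\mu}^{\cdots}$ in such a way that the Sobolev/gradient piece can be absorbed into the coercive term, leaving a Gronwall-type differential inequality for $\|u_n(t)\|_{L^\mu}^\mu$. Integrating gives \eqref{intro09:36bis} with a constant independent of $n$ (but depending on $T$, $E$, $\|u_0\|_{L^1}$ as stated). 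For the smoothing estimate \eqref{contractiveintro1} I would iterate this $L^\mu$–$L^m$ argument along a Moser/Nash scheme: running the same test with a sequence of increasing exponents and tracking the time weights produces the ultracontractive decay $t^{-\frac N2(1/\mu-1/m)}$; alternatively one invokes an abstract $L^\mu$–$L^m$ regularizing principle once the family of differential inequalities is established.

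With uniform bounds in hand, the remaining steps are compactness and passage to the limit. The estimates give $u_n$ bounded in $L^2_{loc}((0,T);W^{1,2}_0)$ and, through the equation, $\partial_t u_n$ bounded in a dual space, so Aubin–Lions yields strong convergence $u_n \to u$ in $L^1(\Om_T)$ and a.e., which suffices to pass to the limit in the nonlinear drift $E_n|u_n|^\theta u_n$ (the uniform higher integrability controls the drift in $L^1$ and upgrades convergence to the needed space). The $C([0,T];L^1)$ continuity follows from the uniform equicontinuity in time furnished by the energy estimate together with the $L^1$ contraction/comparison structure. Finally, uniqueness I would prove by testing the difference of two solutions against a truncation of their difference (an $L^1$-type or Kruzhkov doubling argument), where again the superlinearity $\theta>0$ combined with $\frac1r+\theta=\frac1N$ is what makes the drift difference controllable.

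I expect the main obstacle to be the uniform absorption of the drift term in the energy estimate: making the Hölder–Sobolev interpolation close \emph{exactly} at the borderline exponent $\frac1r+\theta=\frac1N$ is delicate, since one must ensure the constant in front of the gradient term is strictly less than the coercivity constant $\alpha$ (possibly after splitting $E_n$ into a small-$L^r$ part plus a bounded part, or localizing where $|u_n|$ is large), and one must keep every constant independent of the regularization index $n$. The time-weighted iteration for \eqref{contractiveintro1} is technically the most laborious part, but conceptually routine once the basic estimate is secured.
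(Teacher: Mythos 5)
Your proposal follows essentially the same route as the paper: approximate, test with (regularized) powers $|u_n|^{m-2}u_n$ to get a differential inequality for $\|u_n(t)\|_{L^m}^m$, absorb the borderline drift term via H\"older--Sobolev at the exponent $\frac1r+\theta=\frac1N$ together with the uniform $L^1$ bound, deduce the decay from an ODE lemma, and conclude by Aubin--Lions compactness and the $L^1$ comparison principle for uniqueness. The one obstacle you flag --- making the constant in front of the gradient term small at the borderline --- is resolved in the paper exactly by the device you suggest in passing, namely restricting to the level set $\{|E||u_n|^{\theta}>\rho\}$, whose measure is small uniformly in $n$ and $t$ by the $L^1$ estimate, while on the complement the drift contributes only a Gronwall term $\rho^2\int|u_n|^m$.
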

On the other hand, if we drop assumption \eqref{thetasmall}, the result reads as follows.
\begin{teo}\label{thetabigf=0}
Given $\theta>0$, take $\mu\in(1,\infty)$ and $r\in(N,+\infty]$ such that
\be\label{thetabig}
\frac1r+ \frac{\theta}{\mu}<\frac{1}{N}.
\ee
For any $u_0\in \elle{\mu}$ and $|E|\in \big(L^{\infty}((0,\infty);\elle{r})\big)^N$ there exists $T^{*}>0$ depending on $\alpha, N,\mu, r, \theta, \|E\|_{L^{\infty}(0,T,L^{ r}(\Omega))}$, and $\|u_0\|_{\elle{\mu}}$, such that, for $T= T^*$, problem \eqref{introintro} admits a unique weak solution $u$ in  $C([0,T^*);L^1(\Omega))\cap L^2_{loc}((0,T^*); W^{1,2}_0(\Omega))$ that satisfies
\be\label{intro09:36}
\|u(t)\|_{\elle{\mu}}\le \|u_0\|_{\elle{\mu}}\left(\frac{T^*}{T^*-t }\right)^{\gamma} \qquad \mbox{ for } t\in (0,T^*).
\ee
Moreover, the following contractive type estimate holds true: for any $m>\mu$ there exists $C(t)=C(\alpha, N, \theta, m, \mu, r, \|E\|_{L^{\infty}(0,T^*,L^{ r}(\Omega))},\|u_0\|_{L^{\mu}(\Omega)},t)$ such that
\be\label{17:38}
\|u(t)\|_{L^m (\Omega )}\leq  \frac{C(t)}{t^{\frac N2 (\frac{1}{\mu}-\frac 1m)}} \qquad \mbox{ for } t\in (0,T^*).
\ee
with $C(t)\to+\infty$ as $t\to T^*$
\end{teo}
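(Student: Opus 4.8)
The plan is to follow an entirely non-variational approximation scheme, to derive the sharp $L^\mu$ a priori bound that forces the maximal time $T^*$, and then to recover the smoothing estimate and uniqueness from quantitative integrability gains. First I would regularize the problem by truncating the drift, replacing $|u|^\theta u$ with a bounded nonlinearity $h_n$ (for instance $h_n(s)=|T_n(s)|^\theta T_n(s)$) and approximating $u_0$ by bounded data; since the resulting operator has a bounded lower order term, classical parabolic theory (a Galerkin scheme or a Schauder fixed point argument) provides approximate solutions $u_n$ on any $[0,T]$, on which all the computations below are licit.

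The core is the $L^\mu$ estimate. Testing the equation for $u_n$ with $|u_n|^{\mu-2}u_n$ and setting $v_n=|u_n|^{\mu/2}$, the ellipticity in \eqref{mcon} produces the good term $c\int_\Omega|\nabla v_n|^2$, while the drift contributes a term of the size $C\int_\Omega|E|\,|u_n|^\theta\,v_n\,|\nabla v_n|$. By H\"older's inequality (using $|E|\in L^r$), the Gagliardo--Nirenberg inequality $\|v_n\|_{L^s}\le C\|\nabla v_n\|_{L^2}^{a}\|v_n\|_{L^2}^{1-a}$ with $a=N(\tfrac12-\tfrac1s)$, and Young's inequality, this term is bounded by $\epsilon\|\nabla v_n\|_{L^2}^2+C_\epsilon\|v_n\|_{L^2}^{2\kappa}$, where the power of $\|\nabla v_n\|_{L^2}$ that arises equals exactly $2N(\tfrac{\theta}{\mu}+\tfrac1r)$. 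Thus assumption \eqref{thetabig} is precisely what makes this power strictly smaller than $2$, so the term can be absorbed into the good term. Writing $Y(t)=\|u_n(t)\|_{L^\mu}^\mu=\|v_n\|_{L^2}^2$, one is left with the Bernoulli-type differential inequality $Y'\le C\,Y^{\kappa}$, and the exponent bookkeeping gives $\kappa=\frac{(1-a)p}{2-ap}>1$ with $p=2+\tfrac{4\theta}{\mu}>2$. Integrating it yields a bound that is finite only up to $T^*=\big((\kappa-1)C\big)^{-1}\|u_0\|_{L^\mu}^{-\mu(\kappa-1)}$, which depends only on the asserted quantities and reproduces \eqref{intro09:36} with $\gamma=\frac{1}{\mu(\kappa-1)}$.

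These bounds are uniform on every compact $[0,\tau]\subset[0,T^*)$ and, through the good term, give a uniform bound for $v_n$ in $L^2(0,\tau;W^{1,2}_0(\Omega))$, hence integrability for $u_n$; together with a bound for $\partial_t u_n$ read off from the equation, the Aubin--Lions lemma yields strong convergence of $u_n$ and enough equi-integrability of $|u_n|^\theta u_n$ to pass to the limit in the distributional formulation of Definition \ref{defsol1} and to recover the $C([0,T^*);L^1(\Omega))$ regularity. The smoothing estimate \eqref{17:38} I would obtain by a Moser-type iteration: testing with $|u|^{q-2}u$ along an increasing sequence of exponents $q$ and combining the resulting inequalities with the Sobolev embedding produces, at each step, a gain of integrability accompanied by a negative power of $t$; summing the geometric series of exponents gives the sharp rate $t^{-\frac N2(\frac1\mu-\frac1m)}$, while the multiplicative constant inherits the blow-up of \eqref{intro09:36} and hence satisfies $C(t)\to+\infty$ as $t\to T^*$.

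For uniqueness I would subtract two solutions $u_1,u_2$ and exploit that $s\mapsto|s|^\theta s$ is locally Lipschitz and monotone, writing $|u_1|^\theta u_1-|u_2|^\theta u_2=g(t,x)\,(u_1-u_2)$ with $0\le g\le C(|u_1|^\theta+|u_2|^\theta)$; the $L^m$ smoothing bounds just obtained control $g$ in a Lebesgue space strong enough that, after testing the difference equation with $u_1-u_2$ (or a suitable truncation), the drift term is again absorbable and a Gronwall argument forces $u_1=u_2$ on $[0,T^*)$. The main obstacle throughout is that the passage to the limit in the superlinear term must be carried out on an interval where the a priori bounds themselves degenerate at $T^*$: all compactness has to be localized on compact subintervals $[0,\tau]$ and the $t$-dependence of every constant tracked carefully, and it is precisely the quantitative $L^m$ estimate \eqref{17:38} that makes both the limit passage and the uniqueness argument go through.
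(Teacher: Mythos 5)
Your derivation of the $L^\mu$ bound and of $T^*$ is essentially the paper's argument: test with (a regularization of) $|u_n|^{\mu-2}u_n$, control the drift by H\"older--interpolation--Young using exactly \eqref{thetabig} to make the bad power of the gradient term strictly less than $2$, and integrate the resulting Bernoulli inequality. Two remarks on technique: the paper does not run a Moser iteration for \eqref{17:38}; it keeps the Sobolev term on the left, interpolates $\|u_n\|_{\elle m}$ between $\elle\mu$ and $\elle{m2^*/2}$, and invokes a single ODE lemma (Lemma \ref{odeode}) for $y'+Ky^{1+a}\le C y^{1+d}$ with $a>d$, which yields the rate $t^{-\frac N2(\frac1\mu-\frac1m)}$ in one step; your iteration would reach the same rate but is more laborious and the two routes are genuinely interchangeable here. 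Also, since $\mu$ may lie in $(1,2)$, the test function $|u_n|^{\mu-2}u_n$ is singular at $u_n=0$ and must be regularized as in Lemma \ref{19:03}; this is minor because $u_n$ is bounded, but it is not free.

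There are, however, two genuine gaps. First, uniqueness: you propose to subtract two solutions, write the drift difference as $g\,(u_1-u_2)$, test with $u_1-u_2$ and run Gronwall. In $L^2$ this can only be started at a time $\delta>0$ (the smoothing bounds controlling $g$ degenerate at $t=0$), and the conclusion requires $\|u_1(\delta)-u_2(\delta)\|_{\elle2}\to0$ as $\delta\to0$; but the two solutions share the initial datum only in the $C([0,T^*);\elle1)$ topology, and interpolating the $L^1$ smallness against the $L^m$ smoothing bound (which blows up like $\delta^{-\frac N2(\frac1\mu-\frac1m)}$) does not produce this without a rate on the $L^1$ convergence that you do not have. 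The paper closes this by proving an $L^1$-contraction (Theorem \ref{comparison}), obtained by testing with $\epsilon^{-1}T_\epsilon((u_1-u_2)_+)$: the factor $\chi_{\{0<u_1-u_2<\epsilon\}}$ kills the quadratic drift remainder as $\epsilon\to0$, and the limit $\delta\to0$ then uses only the $C([0,T^*);\elle1)$ continuity. Second, attainment of the initial datum when $\mu\in(1,2)$: your compactness is localized on $[\delta,\tau]\subset(0,T^*)$, and since for $\mu<2$ there is no energy estimate up to $t=0$, Aubin--Lions on compact subintervals does not give continuity in $\elle1$ at $t=0$. The paper again resorts to the comparison principle, applied to two approximations $v_n,v_m$ with truncated data, to show the sequence is Cauchy in $C([0,T^*);\elle1)$; this is what produces a limit in that space with $u(0)=u_0$. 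In short, the $L^1$-contraction estimate is not an optional refinement but the tool that makes both the well-posedness in $C([0,T^*);\elle1)$ and the uniqueness go through, and your proposal does not supply a substitute for it.
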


While Theorem \ref{thetasmalf=0} is valid for any given $T>0$, Theorem \ref{thetabigf=0} gives existence only on the finite time interval $(0,T^*)$, with estimate \eqref{intro09:36} blowing up as $t\to T^*$.
Even if the two proofs follow slightly different procedures, the main idea behind these theorems is the same: to obtain a differential inequality satisfied by a suitable norm of the solution $u$ (see for instance \cite{BOP,BZ,MP,Porr2001}). A formal computation shows that, if $u$ is the solution to \eqref{introintro} with initial datum in $\elle{\mu}$, the quantity $y(t)=\|u(t)\|_{\elle \mu}^{\mu}$ solves
\be\label{eqintro}
y'\le Q y^{1+d},
\ee
for suitable constant $Q$ and exponent $d$. This differential inequality is obtained multiplying the equation in \eqref{introintro} by a suitable test function, integrating in space, and coupling Sobolev embedding (in space) with H\"older and interpolation inequalities. Assumptions \eqref{thetasmallbis} and \eqref{thetabig} naturally arise by the dimensional analysis associated to this process. If \eqref{thetasmallbis} is valid, it follows that $d=0$ (despite the fact that $\theta>0$) and we have an estimate for all times. On the other hand, if $\theta$ is large, we have $d>0$, and the dynamic associated to \eqref{eqintro} gives rise to finite time blow-up of the estimate. All the details to make this argument rigorous and the proof of Theorems \ref{thetasmalf=0} and \ref{thetabigf=0} can be found in Section \ref{differentialineq}.

For the sake of clarity, let us stress that the information given by \eqref{intro09:36} is only a bound from above for $\|u(t)\|_{\elle{\mu}}$, that does not imply blow-up for the solution. However, we recall the possibility of measured valued solution appearing in finite time is in accordance with \cite{toscani}.

It is moreover interesting to note that, while the nonlinear drift may exhibit a singular behavior for sufficiently large times, for small values of $t$ the regularizing effect of second order operator (the diffusion) dominates the drift for any $\theta$. Indeed the exponent $\frac{N}{2}\left({1}/{\mu}-1/m\right)$ in \eqref{17:38} is exactly the same as in the classical heat equation (see \cite{porzio1} and reference therein).\\

We also provide existence and $L^p$-regularity results for a nonzero right hand side. Precisely, we consider the following problem
\begin{equation} \label{problemintro}
\begin{cases}
\partial_t u-\div\left(M(t,x)\nabla u +E(t,x) |u|^{\theta} u \right)=f(t,x) & \quad  \mbox{in } \Omega _T,\\
u = 0 & \quad \mbox{on }  (0,T) \times \partial \Omega,\\
u(x,0)=u_0(x) & \quad \mbox{in } \Omega.
\end{cases}
\end{equation}
In order to state our results, we introduce the following quantities:
\begin{equation}\label{qstardef}
q^{\star}=\frac{(N+2)q}{N+2-q} \ \ \ \mbox{and} \ \ \ q^{\star\star}=\frac{(N+2)q}{N+2-2q}.
\end{equation}
Observe that $q^{\star}$ and $q^{\star\star}$ are the natural parabolic counterparts of the Sobolev coniugate exponents of $q$ in the elliptic settings.
\begin{teo}\label{exist2}
Assume $\sigma=2\frac{N+2} N$,
$\theta\geq 0$, $f\in L^q(\Omega_T)$, $u_0\in L^{q^{\star\star}\frac{N}{N+2}}(\Omega)$, $E\in L^r(\Omega_T)$ with
\begin{equation}\label{closelin}
\frac1r+\theta=\frac1{N+2}.
\end{equation}
$(i)$ If $q=1$, problem \eqref{problemintro} admits a weak solution $u\in L^{m}(0,T,  W^{1,m}_0(\Omega))\cap L^{m^{\star}}(\Omega_T)$ for all $m\in[1,1^{\star})$.\\ 
$(ii)$ If $q\in (1, \sigma^\prime)$, problem \eqref{problemintro} admits a weak solution $u\in L^{q^{\star}}(0,T,  W^{1,q^{\star}}_0(\Omega))\cap L^{q^{\star\star}}(\Omega_T)$.\\ 
$(iii)$ If $q\in [\sigma^\prime,\frac{N+2}{2})$, problem \eqref{problemintro} admits a unique weak solution $u$ in $L^{2}(0,T, W^{1,2}_0(\Omega))\cap L^{q^{\star\star}}(\Omega_T)$.
\end{teo}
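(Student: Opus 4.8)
The plan is to prove all three cases through a single approximation scheme whose a priori estimates are of different strength according to the integrability of $f$, and then to pass to the limit; the exponents $q^{\star}$, $q^{\star\star}$ and the threshold $\sigma'$ reflect precisely the parabolic Sobolev embedding $L^2(0,T;W^{1,2}_0(\Omega))\cap L^\infty(0,T;L^2(\Omega))\hookrightarrow L^\sigma(\Omega_T)$. First I would regularize the data: set $f_n=T_n(f)$, choose $u_{0,n}\in L^\infty(\Omega)$ converging to $u_0$ in the prescribed space, and truncate the field $E_n=T_n(E)$ together with the nonlinearity, so that the drift coefficient becomes bounded and globally Lipschitz. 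For these regularized problems a solution $u_n$ in the energy space $L^2(0,T;W^{1,2}_0(\Omega))\cap C([0,T];L^2(\Omega))$ exists by the $\theta=0$ theory quoted in the introduction (or by a standard Galerkin/monotone-operator argument). The whole content of the theorem then lies in obtaining bounds uniform in $n$.

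For the a priori estimates I would distinguish the three regimes. In case $(iii)$, where $q\ge\sigma'$, I would test the equation with $u_n$ itself: the diffusion produces $\alpha\int_{\Omega_T}|\nabla u_n|^2$, while the drift is estimated by $\int_{\Omega_T}|E_n|\,|u_n|^{\theta+1}|\nabla u_n|$; applying Young's inequality to split off $\tfrac{\alpha}{2}\int_{\Omega_T}|\nabla u_n|^2$ and then H\"older together with the embedding into $L^\sigma(\Omega_T)$, the balance condition $\tfrac1r+\theta=\tfrac1{N+2}$ is exactly what makes the remaining factor subcritical and absorbable into the energy. The datum contributes $\int_{\Omega_T} f_n u_n$, controlled by $\|f\|_{L^{\sigma'}}\|u_n\|_{L^\sigma}$ since $q\ge\sigma'$ on the bounded cylinder. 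A Gronwall argument yields a uniform energy bound, and the embedding gives $u_n$ bounded in $L^\sigma(\Omega_T)$; for $q>\sigma'$ one upgrades this to the stated $L^{q^{\star\star}}$ bound by a Moser-type iteration with the power test functions $|u_n|^{\gamma-1}u_n$. In cases $(i)$--$(ii)$, where $q<\sigma'$, the datum is too rough for energy solutions, and I would instead follow the Boccardo--Gallou\"et truncation method: testing with $T_k(u_n)$ one bounds $\int_{\{|u_n|<k\}}|\nabla u_n|^2$ linearly in $k$, estimates the distribution function of $u_n$, and interpolates through the parabolic embedding; tracking the exponents produces the gradient bound in $L^{q^\star}(0,T;W^{1,q^\star}_0(\Omega))$ together with $u_n\in L^{q^{\star\star}}(\Omega_T)$ (respectively the scale $L^{m^\star}$, $m<1^\star$, at the endpoint $q=1$).

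With uniform bounds in hand I would extract a limit. The gradient estimates give weak compactness of $\nabla u_n$ in the relevant $L^p$ space, while bounding $\partial_t u_n$ in a dual space through the equation allows an Aubin--Lions--Simon argument to produce strong $L^1(\Omega_T)$, hence a.e., convergence of a subsequence $u_n\to u$. Linearity of the principal part passes $M\nabla u_n\rightharpoonup M\nabla u$ to the limit; for the nonlinear drift I would combine the a.e. convergence with the equi-integrability furnished by the $L^{q^{\star\star}}$ bound and $E\in L^r$, via Vitali's theorem, to obtain $E_n|u_n|^\theta u_n\to E|u|^\theta u$ in $L^1(\Omega_T)$, which suffices to identify $u$ as a weak solution in the sense of Definition \ref{defsol1}. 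Finally, in case $(iii)$ I would prove uniqueness by subtracting two energy solutions, testing the difference with $T_k(u_1-u_2)$ and letting $k\to\infty$, and closing a Gronwall estimate in which the drift difference is controlled through the local Lipschitz behavior of $|s|^\theta s$ and again the balance condition; no uniqueness is asserted in the rough-data cases $(i)$--$(ii)$, as is standard for merely distributional solutions.

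I expect the a priori estimates to be the main obstacle, and within them the control of the superlinear drift at the critical balance $\tfrac1r+\theta=\tfrac1{N+2}$. Because this relation makes the drift term scale exactly like the diffusion, its absorption into $\alpha\int_{\Omega_T}|\nabla u_n|^2$ is borderline and forces one to track the constants in Young's and the parabolic Sobolev inequalities sharply; this same borderline behavior is what dictates the passage from the energy regime $q\ge\sigma'$ to the Boccardo--Gallou\"et regime $q<\sigma'$, and what renders the endpoint $q=1$ accessible only through the open scale $m<1^\star$.
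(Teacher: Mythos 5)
Your overall architecture (approximation, power/truncation test functions, Gagliardo--Nirenberg, Aubin--Lions--Simon plus Vitali for the limit) matches the paper's, but there is a genuine gap at the heart of the a priori estimates: the absorption of the drift term. After Young's inequality the drift contributes, in the energy case, a term of the form
\[
\frac{1}{2\alpha}\int_{\Omega_T}|E|^2|u_n|^{2\theta+2}
\le \|E\|_{L^r(\Omega_T)}^2\,\|u_n\|_{L^1(\Omega_T)}^{2\theta}\,\|u_n\|_{L^{\sigma}(\Omega_T)}^{2},
\]
where the H\"older exponents close precisely because $\tfrac2r+2\theta+\tfrac2\sigma=1$ under \eqref{closelin}. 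This term is \emph{exactly critical}, not subcritical: via Gagliardo--Nirenberg, $\|u_n\|_{L^\sigma(\Omega_T)}^2$ is of the same order as the energy on the left-hand side, and its coefficient $\|E\|_{L^r(\Omega_T)}^2\|u_n\|_{L^1(\Omega_T)}^{2\theta}$ has no reason to be small. A Gronwall argument does not apply, because the term contains the gradient over the whole cylinder rather than a quantity of the form $\int_0^t y(s)\,ds$. The paper's resolution — flagged in the introduction as the non-standard step — is a slicing procedure in time: on a slice of length $h$ the uniform bound $\|u_n(t)\|_{L^1(\Omega)}\le M_0$ of Proposition \ref{stimaL1} gives $\|u_n\|_{L^1(\Omega_h)}\le hM_0$, so the coefficient acquires a factor $h^{2\theta}$ and can be made smaller than the absorption threshold by choosing $h$ small; the global estimate then follows by finite induction over the slices (see \eqref{sceltah}--\eqref{induzione} and \eqref{smallcon}). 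The same issue arises in your Boccardo--Gallou\"et regime $q<\sigma'$: the linear-in-$k$ bound on $\int_{\{|u_n|<k\}}|\nabla u_n|^2$ already requires absorbing the drift, hence the same slicing (Lemma \ref{stimamarcin}). Without this ingredient your estimates do not close.

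A secondary divergence: for uniqueness in case $(iii)$ you test the difference with $T_k(u_1-u_2)$ and invoke Gronwall via the local Lipschitz behaviour of $|s|^\theta s$; this again produces a critical term $\int|E|^2(|u_1|^{2\theta}+|u_2|^{2\theta})|u_1-u_2|^2$ that is not of Gronwall type without further integrability. The paper instead proves an $L^1$-contraction (Theorem \ref{comparison}) by testing with $\epsilon^{-1}T_\epsilon((u_1-u_2)_+)$ and letting $\epsilon\to0$: since $T_\epsilon((u_1-u_2)_+)$ is supported where $0<u_1-u_2<\epsilon$, the drift difference contributes $O(\epsilon)$ and vanishes in the limit, with no Gronwall needed. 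This comparison principle is also what the paper uses to build the Cauchy sequence in $C([0,T],L^1)$ in case $(i)$.
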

Again, we see that there exists a range of values of $\theta$ for which the influence of the drift, that is still superlinear, is completely absorbed by the principal operator. Indeed, we have existence for all times and we recover the same type of regularity as in the case $E\equiv0$.\\
Here we use the test-function based approach developed in \cite{BG,BDGO2}.
However, let us mention that even for such small values of $\theta$, the achievement of the a priori estimates is not standard and we had to adopt a slicing procedure in time to deal with the non coercive drift term.\\

As before, we also consider cases where the exponent $\theta$ is large.
\begin{teo}\label{thmSte} For any $\theta>0$, take $q\in [2\frac{N+2}{N+4},\frac{N+2}{2})$ and $r>N+2$ such that
\begin{equation}\label{farfromlin}
\frac1r+\frac{\theta}{q^{\star\star}}=\frac{1}{N+2},
\end{equation}
with $q^{\star\star}$ defined in \eqref{qstardef}, and assume  that
\[
f\in L^q(\Omega_T),  \ \ \ E\in (L^r(\Omega_T))^N, \ \ \ u_0\in \elle{q^{\star\star}\frac{N}{N+2}}.
\]
Then there exists a constant $C=C(\alpha, q)$ such that, whenever
\be\label{smallness}
\|E\|_{L^{r}(\Om_T)}^{\frac1{\theta}}\left[\|f\|_{L^{q}(\Om_T)} + \|u_0\|_{L^{q^{\star\star}\frac{N}{N+2}}(\Omega)}\right]\le \frac{\theta}{(C(\theta+1))^{\frac{\theta+1}{\theta}}},
\ee
problem \eqref{problemintro} admits a unique weak solution $u$ in $L^{q^{\star\star}}(\Omega_T)\cap L^2(0,T,W^{1,2}_0(\Omega)) $.
\end{teo}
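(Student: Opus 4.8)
The plan is to run the test-function scheme of \cite{BG,BDGO2} to produce a priori estimates, and to exploit the smallness hypothesis \eqref{smallness} — which is exactly the threshold making a scalar superlinear inequality solvable — to close those estimates globally in time and obtain existence and uniqueness in $L^{q^{\star\star}}(\Omega_T)\cap L^2(0,T,W^{1,2}_0(\Omega))$. First I would set up an approximation scheme: truncate $E$, $f$ and $u_0$ into bounded data $E_n,f_n,u_{0,n}$ and replace the convective nonlinearity $|u|^\theta u$ by a bounded Lipschitz truncation. Each approximate problem is then a coercive parabolic equation with a bounded lower-order term, hence has a weak solution $u_n\in L^2(0,T,W^{1,2}_0(\Omega))\cap C([0,T];L^2(\Omega))$ by standard theory. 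Everything then reduces to bounds on $u_n$ that are uniform in $n$ in the two target spaces and strong enough to pass to the limit in the nonlinear term.

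The core is the a priori estimate. On the time-slab $\Omega_t=(0,t)\times\Omega$ I would test with the power $|u_n|^{\lambda-2}u_n$, choosing $\lambda=q^{\star\star}\tfrac{N}{N+2}$ (exactly the exponent for which $u_0\in L^\lambda$ and for which the parabolic Sobolev embedding of $L^2(W^{1,2}_0)\cap L^\infty(L^2)$ places $|u_n|^{\lambda/2}$ in $L^{2(N+2)/N}$, i.e. $u_n\in L^{q^{\star\star}}$). The ellipticity \eqref{mcon} retains a positive multiple of $\int_{\Omega_t}\big|\nabla\big(|u_n|^{\lambda/2}\big)\big|^2$, and since $q\ge 2\tfrac{N+2}{N+4}=\sigma'$ the datum $f_n$ is handled by H\"older. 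The convective contribution $\int_{\Omega_t}|E_n||u_n|^{\theta+\lambda-1}|\nabla u_n|$ is split by Young's inequality, its gradient factor absorbed into the diffusion, and the leftover $\int_{\Omega_t}|E_n|^2|u_n|^{2\theta+\lambda}$ is estimated by H\"older in the exponents dictated by \eqref{farfromlin}, namely $\frac1r+\frac{\theta}{q^{\star\star}}=\frac1{N+2}$, which is precisely the dimensional balance producing the superlinear feedback $\|u_n\|_{L^{q^{\star\star}}(\Omega_t)}^{\theta+1}$. Collecting terms gives a constant $C=C(\alpha,q)$ and, uniformly in $n$,
\[
X(t)\le C\big(\|f\|_{L^q(\Omega_T)}+\|u_0\|_{L^{q^{\star\star}N/(N+2)}(\Omega)}\big)+C\,\|E\|_{L^r(\Omega_T)}\,X(t)^{\theta+1},\qquad X(t):=\|u_n\|_{L^{q^{\star\star}}(\Omega_t)}.
\]

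Writing this as $X\le A+BX^{\theta+1}$ with $A=C(\|f\|_{L^q}+\|u_0\|)$ and $B=C\|E\|_{L^r(\Omega_T)}$, condition \eqref{smallness} is exactly the sharp requirement $A\le\tfrac{\theta}{\theta+1}\big(B(\theta+1)\big)^{-1/\theta}$ under which the admissible set $\{X\ge0:\ X\le A+BX^{\theta+1}\}$ splits into a low branch and a high branch with a gap. Since $t\mapsto X(t)$ is continuous and $X(0)=0$ lies below the small root, a continuity (bootstrap) argument in $t$ forces $X(t)$ to stay on the low branch up to $t=T$; this plays, in the present superlinear regime, the role of the time-slicing used for Theorem \ref{exist2}. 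The uniform bound on $\|u_n\|_{L^{q^{\star\star}}(\Omega_T)}$, reinserted into the energy identity obtained by testing with $u_n$, yields a uniform bound on $\|\nabla u_n\|_{L^2(\Omega_T)}$ and hence on $\partial_t u_n$ in a dual space. Aubin--Lions compactness then gives strong and a.e. convergence of $u_n$, and the uniform $L^{q^{\star\star}}$ bound makes $|u_n|^\theta u_n$ equi-integrable (Vitali), so the limit $u$ is a weak solution in the stated class. Uniqueness follows by testing the equation for $w=u_1-u_2$ with $w$: the diffusion controls $\|\nabla w\|_{L^2}^2$, while the difference of the convective terms, estimated via $\big||u_1|^\theta u_1-|u_2|^\theta u_2\big|\le C(|u_1|^\theta+|u_2|^\theta)|w|$ and the same H\"older--Sobolev chain, has a coefficient that \eqref{smallness} keeps below the diffusion constant, forcing $w\equiv0$.

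The step I expect to be genuinely delicate is the a priori estimate, precisely because the superlinear drift is noncoercive: unlike the $\theta$-small regime of Theorem \ref{exist2}, where the convective term is absorbed on short time slabs, here it must be absorbed globally, and this is possible only because the small coefficient $\|E\|_{L^r(\Omega_T)}$ confines the feedback $\|u_n\|_{L^{q^{\star\star}}}^{\theta+1}$ to the low branch of the algebraic inequality. Keeping the constant $C=C(\alpha,q)$ independent of both $n$ and $t$, so that the continuity-in-time bootstrap closes uniformly in the approximation, is the quantitative heart of the argument.
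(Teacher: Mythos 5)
Your existence argument is correct in substance but follows a genuinely different route from the paper. You run the a priori estimate directly on the approximating \emph{nonlinear} problems, obtain the self-consistent inequality $X(t)\le A+B\,X(t)^{\theta+1}$ for $X(t)=\|u_n\|_{L^{q^{\star\star}}(\Omega_t)}$, and close it by a continuity-in-time bootstrap (the admissible set splits into a low and a high branch, $X(0)=0$, so $X$ stays low). The paper instead \emph{freezes} the nonlinearity: it studies the linear problem $\partial_t u-\div(M\nabla u)=-\div(E|v|^{\theta}v)+f$, proves in Lemma \ref{stimafrozen} the estimate $\|u\|_{L^{q^{\star\star}}}\le \delta\|v\|_{L^{q^{\star\star}}}^{\theta+1}+K$, uses the elementary Lemma \ref{auxiliaryresult} (whose threshold is exactly \eqref{smallness}) to produce an invariant ball $B_R$ for the map $v\mapsto\mathscr F(v)$, and concludes by Schauder, checking continuity and compactness of $\mathscr F$ (the latter via the $G_l$-tail estimate and Vitali's theorem). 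Both approaches hinge on the same Hölder balance \eqref{farfromlin} and the same two-root structure of $s\mapsto \delta s^{\theta+1}+K$; your version avoids verifying continuity/compactness of a solution map but must instead justify the power test function on the nonlinear approximations and the continuity of $t\mapsto X(t)$ (both fine, since $u_n\in L^\infty(\Omega_T)$ and $\gamma\ge0$ for $q\ge 2\tfrac{N+2}{N+4}$). Also note that after Gagliardo--Nirenberg the inequality first appears at the level of $X^{2\gamma+2}$; extracting the clean form $X\le A+BX^{\theta+1}$ requires the extra Young step with exponents $\gamma+1$ and $\tfrac{\gamma+1}{\gamma}$ that the paper performs in \eqref{cip}, not just a naive root extraction.

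The one step I would not accept as written is uniqueness. Testing the difference $w=u_1-u_2$ with $w$ produces a coefficient proportional to $\|E\|_{L^r}\big(\|u_1\|_{L^{q^{\star\star}}}^{\theta}+\|u_2\|_{L^{q^{\star\star}}}^{\theta}\big)$, and \eqref{smallness} is a condition on the \emph{data}, not on an arbitrary competitor $u_2$ in the class $L^{q^{\star\star}}(\Omega_T)\cap L^2(0,T,W^{1,2}_0(\Omega))$; as stated your argument only yields uniqueness among solutions already known to be small. To repair it you must either first show that \emph{every} solution in the class lies on the low branch (which again needs the power test function to be justified for a merely $L^{q^{\star\star}}\cap L^2(W^{1,2}_0)$ solution, via a truncation argument as in Lemma \ref{19:03}), or slice the time interval so that $\|E\|_{L^r}$ and $\|u_i\|_{L^{q^{\star\star}}}$ are small on each slab. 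The paper sidesteps this entirely by invoking the $L^1$-contraction/comparison principle of Theorem \ref{comparison}, which gives unconditional uniqueness in the stated class.
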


To prove this result, we apply Schauder fixed point Theorem to the map $\mathscr F: L^{q^{\star\star}}(\Omega_T)\to  L^{q^{\star\star}}(\Omega_T)$, that associates to each $v\in  L^{q^{\star\star}}(\Omega_T)$ the unique solution $u\in L^{q^{\star\star}}(\Omega_T)$ to
\[
\partial_t u-\div\left(M(t,x)\nabla u\right)= \div\left(E(t,x) |v|^{\theta} u \right)+f(t,x)  \quad  \mbox{in } \Omega _T, 
\]
with $u=0$ on $(0,T)\times\partial\Omega$ and $u(x,0)=u_0(x)$ in $\Omega$ (see \cite{BDGO2} for the existence of such a solution). So, the smallness assumption \eqref{smallness} on the size of the data assures existence of an invariant ball for $\mathscr F$. Notice that this existence for small data is somehow connected to the local in time existence of Theorem \ref{thetabigf=0}. Indeed, taking $f\equiv0$ for simplicity, \eqref{smallness} is always verified for $T$ sufficiently small.\\
As before, assumptions \eqref{closelin} and \eqref{farfromlin} appear naturally from a dimensional analysis of the equation and allow to close the a priori estimates in Theorem \ref{exist2} and to have the map $\mathscr F$ well defined in Theorem \ref{thmSte}. Roughly speaking, the difference with \eqref{thetasmallbis} and \eqref{thetabig} is due to the use of the Gagliardo-Nirenberg inequality on the parabolic cylinder instead of Sobolev inequality in space.\\

We conclude this introduction by pointing out that in Section \ref{preliminari} (see Theorem \ref{comparison} below) we prove an $L^1$ contraction estimate (see the for instance \cite{otto} and reference therein) of the following type
\[
\int_{\Omega}(v(\tau)-w(\tau))_+\le\int_0^{\tau}\int_{\Omega}(f-g)\chi_{v>w}+\int_{\Omega}(v_0-w_0)_+, \quad \tau \in[0,T],
\]
where $v$ is the solution associated to source $f$ and initial datum $v_0$ and $w$ is the solution associated to source $g$ and initial datum $w_0$. This kind of comparison principle represents a key tool to derive the uniqueness and to obtain the well posedness in  $C([0,T), L^1)$ of our initial boundary value problem.

\section{Notation and functional framework}

In order to introduce our functional framework, take $T>0$, $1\leq p \leq \infty$, and let $X$ be a separable Banach space endowed with a norm $\|\cdot\|_X$. The 
space 
$
\pspacegen
$  
is given by all measurable
functions $u\colon [0,T] \rightarrow X$ that are $L^p$ integrable. Such a space is a  Banach space equipped with the norm
\[
\|u\|_{\pspacegen}
:=\left(
\int_0^T \|u(t)\|^p_X \dx t
\right)^{1/p}
\]
for $1\leq p <\infty$ and
\[
\|u\|_{\pspacegeninfty}
:=\esssup_{0<t<T }  
 \|u(t)\| _X
\]
for $p=\infty$. As usual we also define
\[
L_{\mathrm{loc}}^p(0,T, X)
:=
\left\{
u : [0,T] \to X \; \text{measurable} \; : \;
u \in L^p(t_1,t_2; X) \text{ for every } [t_1,t_2] \subset (0,T)
\right\}.
\]
In other words, a function $u$ belongs to $L_{\mathrm{loc}}^p(0,T, X)$ if it is $L^p$ integrable
with values in $X$ on every compact subinterval of $(0,T)$. Analogously,  the space 
$
\parspgencont
$ consists in the set of continuous functions $u\colon [0,T] \rightarrow X$, 
equipped the norm  
\[
\|u\|_{\parspgencont}
:=\max_{0\leq t \leq T}
 \|u(t)\|_X.
\]
We consider now the  Banach space  
\[
W^2(0,T):=
\left\{
u \in L^2(0,T,W_0^{1,2}(\Om))\colon\, u_t \in {\dual}
\right\},
\]
endowed with the norm
\[
\|
u
\|_{W^2(0,T)}
 := 
\|
u
\|_{L^2(0,T,W^{1,2}(\Om))}
+
\|
u_t
\|_{\dual},
\]
and its local counterpart
\[
W_{\mathrm{loc}}^2(0,T)
:=
\left\{
u \in L_{\mathrm{loc}}^2(0,T, W_0^{1,2}(\Omega))
:\;
u_t \in L_{\mathrm{loc}}^{2}(0,T, W^{-1,2}(\Omega))
\right\}.
\]

%
%
%
%
%
Given $1<p<\infty$, the Marcinkiewicz space $\mathcal M^p(\Omega_T)$ consists of all measurable functions $f$ defined on $\Omega_T$ such that
\begin{equation*}\label{2.1}
\|f\|^p_{\mathcal M^p}:=\sup_{k>0}k^{ p} \lambda_f(k)
\end{equation*}
is finite, with $\lambda_f(k):=  \left| \left\{ (t,x)\in \Om_T: |f(t,x)|>k \right\}  \right|$. It is well known that $L^p (\Om) \subset \mathcal M^{p}(\Om)\subset L^q(\Om)$, whenever $1\leq  q<p.$ 
Let us also recall the following consequence of the well known Gagliardo-Nirenberg  inequality (see
\cite[Proposition 3.1]{who}),

\begin{lemma}\label{GN}
Let 
$
\varphi \in L^\infty(0,T, L^2(\Om))\cap L^2(0,T,W^{1,2}_0(\Om))
$ and let $\sigma:=2 \frac {N+2}{N}$.
Then
\[
\int_{\Om_T} |  \varphi |^\sigma \dx x\dx t
\leq
C(N)
\left(
\esssup_{t \in (0,T)} \int_\Om |\varphi(t)|^2\dx x
\right)^{2/N}
\int_{\Om_T} |\nabla \varphi |^2\dx x\dx t.
\]
\end{lemma}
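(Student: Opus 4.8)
The plan is to establish the Gagliardo-Nirenberg inequality of Lemma \ref{GN} by interpolating between the $L^\infty(0,T;L^2(\Om))$ bound and the spatial Sobolev embedding coming from the $L^2(0,T;W^{1,2}_0(\Om))$ control, exactly as in the cited \cite[Proposition 3.1]{who}. First I would fix $t\in(0,T)$ and work on the spatial slice $\Om$. Since $N\ge 3$, the Sobolev embedding $W^{1,2}_0(\Om)\hookrightarrow L^{2^*}(\Om)$ with $2^*=\tfrac{2N}{N-2}$ gives
\[
\left(\int_\Om |\varphi(t)|^{2^*}\dx x\right)^{1/2^*}\le C(N)\left(\int_\Om|\nabla\varphi(t)|^2\dx x\right)^{1/2}.
\]
The target exponent $\sigma=2\tfrac{N+2}{N}$ lies strictly between $2$ and $2^*$, so the key step is to split $|\varphi(t)|^\sigma=|\varphi(t)|^{2(1-\lambda)}\,|\varphi(t)|^{\sigma-2(1-\lambda)}$ and apply H\"older's inequality on the spatial integral with the pair $(2,2^*)$, choosing the interpolation parameter $\lambda$ so that the exponents match. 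A direct computation of the dimensional balance forces $\sigma-2=\tfrac{4}{N}$, which is precisely the reason the factor $2/N$ appears in the statement; this pins down the split as $\sigma=2\cdot\tfrac{2}{N}\cdot\tfrac{N}{2}$ distributed so that the low-integrability factor contributes the power $2/N$.

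More concretely, I would write, for almost every $t$,
\[
\int_\Om|\varphi(t)|^\sigma\dx x=\int_\Om |\varphi(t)|^{\frac 4N}\,|\varphi(t)|^2\dx x
\le\left(\int_\Om|\varphi(t)|^2\dx x\right)^{2/N}\left(\int_\Om|\varphi(t)|^{2^*}\dx x\right)^{2/2^*},
\]
where the H\"older exponents are $\tfrac N2$ and its conjugate $\tfrac{N}{N-2}$, and one checks that $\tfrac 4N\cdot\tfrac N2=2$ on the first factor while $2\cdot\tfrac{N}{N-2}=2^*$ on the second, so the split is consistent. Combining this with the Sobolev inequality above turns the second factor into $C(N)\int_\Om|\nabla\varphi(t)|^2\dx x$. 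Bounding the first factor by its essential supremum, $\left(\int_\Om|\varphi(t)|^2\right)^{2/N}\le\left(\esssup_{s}\int_\Om|\varphi(s)|^2\right)^{2/N}$, pulls a constant (in $t$) out of the time integral.

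Finally I would integrate the resulting pointwise-in-time inequality over $(0,T)$. The supremum factor is independent of $t$ and factors out, leaving $\int_0^T\!\int_\Om|\nabla\varphi|^2\dx x\dx t$ as the remaining time integral, which yields exactly the claimed estimate with a constant $C(N)$. The main obstacle is purely bookkeeping: verifying that the chosen H\"older and interpolation exponents are admissible (that $\tfrac N2>1$, i.e.\ $N>2$, which holds since $N\ge3$, and that $2<\sigma<2^*$) and that the algebra of exponents closes. There is no genuine analytic difficulty beyond the standard Sobolev embedding and H\"older's inequality; the content of the lemma is the precise power $2/N$, which emerges from the interpolation parameter and is forced by the scaling, so care with the arithmetic of exponents is the only real point to get right.
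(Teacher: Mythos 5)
Your argument is correct: the pointwise-in-time split $|\varphi|^\sigma=|\varphi|^{4/N}|\varphi|^2$, H\"older with exponents $\tfrac N2$ and $\tfrac{N}{N-2}$, the Sobolev embedding $W^{1,2}_0(\Om)\hookrightarrow L^{2^*}(\Om)$ (valid since $N\ge3$), and integration in time close the exponent arithmetic exactly as claimed. The paper does not reproduce a proof but simply cites \cite[Proposition 3.1]{who}, and your argument is precisely the standard one behind that reference.
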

For our purposes it is also useful to recall, for every  $s\in \mathbb R$ and $k>0$, the definition of the following truncation functions
\[
T_k(s)=\max\{-k,\min\{s,k\}\} \quad \mbox{and} \quad G_k(s)=s-T_k(s).
\] 
We shall also use the following property related to differential inequalities.
\begin{lemma}\label{odeode}
Take $K,C\in(0,\infty)$ and assume that $y(t)$ is a positive function in $W^{1,1}_{loc}(0,T)$ such that
\[
y'+Ky^{1+a}\le Cy \quad \mbox{a.e. in } (0,T).
\]
Therefore
\[
y(t)\le \left(\frac{1}{aK}\right)^{1/a}\frac{e^{Ct}}{t^{1/a}}
\]
\end{lemma}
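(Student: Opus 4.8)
The plan is to linearize the Bernoulli-type inequality by passing to the auxiliary function $\zeta:=y^{-a}$, and then to integrate upward from the left endpoint using only the positivity of $y$, so that no initial value of $y$ is needed (which is essential, since the asserted bound blows up as $t\to0^+$). First I would record the needed regularity: because $y\in W^{1,1}_{loc}(0,T)$, it is locally absolutely continuous, hence continuous, on $(0,T)$; being strictly positive, it admits a strictly positive lower bound on every compact subinterval $[s,t]\subset(0,T)$. Consequently $\zeta=y^{-a}$ is again locally absolutely continuous on $(0,T)$ and the chain rule yields $\zeta'=-a\,y^{-a-1}y'$ almost everywhere.

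Next I would transform the differential inequality. Multiplying the hypothesis $y'\le Cy-Ky^{1+a}$ by the (negative) factor $-a\,y^{-a-1}$ reverses the inequality and gives, a.e.\ in $(0,T)$,
\[
\zeta'=-a\,y^{-a-1}y'\ \ge\ -a\,y^{-a-1}\big(Cy-Ky^{1+a}\big)=-aC\,\zeta+aK,
\]
that is, $\zeta'+aC\,\zeta\ge aK$. Multiplying by the integrating factor $e^{aCt}$ and using $e^{aCt}\ge1$ for $t\ge0$, this becomes
\[
\big(e^{aCt}\zeta\big)'\ge aK\,e^{aCt}\ge aK\qquad\text{a.e. in }(0,T).
\]

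Finally I would integrate this over an interval $[s,t]\subset(0,T)$, obtaining $e^{aCt}\zeta(t)-e^{aCs}\zeta(s)\ge aK(t-s)$. Here the positivity of $y$ pays off: since $\zeta(s)=y(s)^{-a}>0$, the boundary term $e^{aCs}\zeta(s)$ is nonnegative and may be discarded, leaving $e^{aCt}\zeta(t)\ge aK(t-s)$; letting $s\to0^+$ gives $e^{aCt}\zeta(t)\ge aKt$. Unwinding $\zeta=y^{-a}$ then yields $y(t)^{-a}\ge aKt\,e^{-aCt}$, i.e.
\[
y(t)\le\Big(\frac{1}{aK}\Big)^{1/a}\frac{e^{Ct}}{t^{1/a}},
\]
which is the claim. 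I expect the only genuine subtlety to be the treatment of the left endpoint: the inequality carries no initial condition and $y$ may be singular at $t=0$, but this is precisely handled by the nonnegativity of $\zeta$, since dropping the boundary term at $s$ only weakens the estimate in the favorable direction and the limit $s\to0^+$ is harmless. The accompanying technical point—that $\zeta=y^{-a}$ inherits local absolute continuity and that the chain rule applies—is secured by the local positive lower bound on $y$ coming from its continuity.
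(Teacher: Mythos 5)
Your proof is correct. The paper does not actually prove this lemma --- it simply cites Lemma 2.6 of Porretta (2001) and formula (2.18) of Boccardo--Orsina--Porzio --- and your Bernoulli substitution $\zeta=y^{-a}$, with the boundary term at $s$ discarded by positivity before letting $s\to0^+$, is precisely the standard argument behind those references; the regularity points you flag (local absolute continuity of $y$, the positive lower bound on compact subintervals, and the validity of the chain rule for $\zeta$) are handled correctly.
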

\begin{proof}
See for instance Lemma 2.6 of \cite{Porr2001} or formula (2.18) of \cite{BOP}.
\end{proof}
We conclude this section specifying the notion of solution we shall use through the work.

\begin{defin}\label{defsol1}
If $u_0\in L^{1}(\Omega)$ and $f\in L^{1}(\Omega_T)$, we say that $u\in C([0,T);\elle1)\cap L^1_{loc}(0,T, W^{1,1}_0(\Omega))$ is a weak solution to \eqref{problemintro} if $u(x,0)=u_0(x)$, $|E||u|^{\theta+1}\in L^1(\Omega_T)$, and 
\begin{eqnarray*} 
\io \big(u(t_2)\varphi(t_2)-u(t_1)\varphi(t_1)\big)\dx x-
 \int_{t_1}^{t_2}\int_{\Omega} u\partial_t\varphi\\
+\int_{t_1}^{t_2}\int_{\Omega} \big(M\nabla u+E|u|^{\theta}u \big)\nabla\varphi\dx x \dx t=\int_{t_1}^{t_2}\int_{\Omega}  f  \varphi  \dx x \dx t\\
\end{eqnarray*} 
for every $0<t_1<t_2<T$ and every  $\varphi\in  C^{\infty}_c(\Omega_T)$.
\end{defin}

\begin{remark}\label{helpingremark}
If a solution enjoys the following additional regularity
\[
u\in  W^2_{loc}(0,T) \quad |E||u|^{\theta+1}\in L^2_{loc}((0,T);\elle2),
\]
by the density of $C_c^{\infty}(\Omega_T)$ in $W^2(0,T)$ and a truncation argument, we deduce that 
\[
\int_{t_1}^{t_2}\langle \partial_t u,\varphi \rangle\dx t+\int_{t_1}^{t_2}\int_{\Omega} \big(M\nabla u+E|u|^{\theta}u \big)\nabla\varphi\dx x \dx t=\int_{t_1}^{t_2}\int_{\Omega}  f  \varphi  \dx x \dx t
\]
for every $0<t_1<t_2<T$ and all $\varphi\in W^{2}(0,T)\cap L^{\infty}(\Omega_T)$. Here $\langle \cdot , \cdot \rangle$ is the duality pairing between $W^{-1,2}(\Omega)$ and $W^{1,2}_0(\Omega)$. In the case the additional regularity is global on the time interval $(0,T)$ we can also take $t_1=0$ and $t_2=T$.
\end{remark}

****

\section{Preliminary results}\label{preliminari}

Let us introduce a suitable sequence of approximating solutions to our problem. For any $T>0$ and $n\in \mathbb N$, classical results (see for instance \cite{Li}) imply the existence of $\un\in W^2(0,T)$ solution to
\begin{equation}\label{pn}
\begin{cases}
\partial_t u_n - {\rm div} \,( M(t,x)\nabla u_n+E(t,x)
g_n(u_n))=   f_n(t,x) & \quad  \mbox{in } \Omega_T,\\
u_n= 0 & \quad \mbox{on }  (0,T) \times \partial \Omega ,\\
u_n(0,x)=u_{0,n}(x) & \quad \mbox{in } \Omega.
\end{cases}
\end{equation}
where $f_n=T_n(f)$, $u_{0,n} =T_n(u_0)$, and $g_n(s)=|s|^{\theta}s/(1+\frac 1n|s|^{\theta+1})$. 
 Notice that, thanks to our assumption on $E$, there exist $r>N$ and $s>1$ such that
\[
\frac{N}{2r}+\frac{1}{s}<\frac12 \quad \mbox{and} \quad |E|\in L^s((0,T),\elle r).
\] 
Therefore we also know that $\un\in L^{\infty}(\Omega_T)$ (see \cite{AS}).\\

Our first result states that the $L^1$-norm of $\un$  is uniformly bounded in time.
\begin{prop}\label{stimaL1} Let  $u_0\in L^1(\Omega)$ and $f\in L^1(\Omega_T)$. Then, every $u_n$ solution to \eqref{pn} satisfies
\begin{equation}\label{elleuno}
\io|\un|(t)dx \le M_0:=  \|  u_0 \| _{L^1(\Omega)}+\| f \| _{L^1(\Omega_T)} \quad \mbox{for all } t\in(0,T).
\end{equation}

\end{prop}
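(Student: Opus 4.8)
The plan is to derive \eqref{elleuno} by testing the approximate equation \eqref{pn} against a bounded, Lipschitz regularization of the sign function and then sending the regularization parameter to zero. For $\delta>0$ let $S_\delta$ be the odd, nondecreasing function with $S_\delta(s)=\sign(s)$ for $|s|\ge\delta$ and $S_\delta(s)=s/\delta$ for $|s|<\delta$, so that $|S_\delta|\le1$ and $S_\delta'(s)=\frac1\delta\chi_{\{|s|<\delta\}}$. First I note that the integrability condition recalled after \eqref{pn} forces $\frac1s<\frac12-\frac{N}{2r}<\frac12$, hence $s>2$, and also $r>N>2$; therefore $|E|\in L^2(\Omega_T)$ and, $g_n$ being bounded, $E\,g_n(\un)\in L^2(\Omega_T)$. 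Since $\un\in W^2(0,T)\cap L^\infty(\Omega_T)$, the function $S_\delta(\un)\in L^2(0,T;W^{1,2}_0(\Omega))\cap L^\infty(\Omega_T)$; arguing as in Remark \ref{helpingremark} (chain rule in $W^2(0,T)$ plus density) it is an admissible test function, and testing \eqref{pn} on the slab $(0,t)$ yields
\[
\int_0^t\langle\partial_t\un,S_\delta(\un)\rangle\dx t+\int_0^t\io\big(M\nabla\un+E g_n(\un)\big)\cdot\nabla S_\delta(\un)\dx x\dx t=\int_0^t\io f_n\,S_\delta(\un)\dx x\dx t.
\]

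The three ``easy'' contributions are handled as usual. For the parabolic term the chain rule gives $\int_0^t\langle\partial_t\un,S_\delta(\un)\rangle\dx t=\io\Theta_\delta(\un(t))-\io\Theta_\delta(u_{0,n})$, where $\Theta_\delta(s)=\int_0^sS_\delta(\sigma)\dx\sigma$ satisfies $\Theta_\delta(s)\ge0$ and $\Theta_\delta(s)\to|s|$ monotonically as $\delta\to0$. The diffusion term is nonnegative, since $\int_0^t\io M\nabla\un\cdot\nabla S_\delta(\un)=\int_0^t\io S_\delta'(\un)\,M\nabla\un\cdot\nabla\un\ge0$ by the ellipticity in \eqref{mcon}, so it may be discarded. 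Finally, using $|S_\delta|\le1$ and $|f_n|\le|f|$, the right-hand side is bounded by $\int_0^t\io|f|$.

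The crux is to show that the non-coercive drift contribution is negligible as $\delta\to0$. Writing $\nabla S_\delta(\un)=S_\delta'(\un)\nabla\un$ and using the elementary inequality $|g_n(s)|\le|s|^{\theta+1}$ together with $|\un|\le\delta$ on the support of $S_\delta'(\un)$, I estimate
\[
\left|\int_0^t\io E g_n(\un)\cdot\nabla S_\delta(\un)\dx x\dx t\right|\le\frac1\delta\int_0^t\int_{\{|\un|<\delta\}}|E|\,|\un|^{\theta+1}|\nabla\un|\le\delta^{\theta}\int_0^t\io|E|\,|\nabla\un|.
\]
For fixed $n$ the last integral is finite, because $|E|\in L^2(\Omega_T)$ and $\nabla\un\in L^2(\Omega_T)$, so the whole drift term is bounded by $\delta^\theta$ times a finite constant and tends to $0$. (Equivalently, even for $\theta=0$ one may invoke dominated convergence, since $|E|\,|\nabla\un|\chi_{\{|\un|<\delta\}}\to0$ a.e. as $\nabla\un=0$ a.e. on $\{\un=0\}$.)

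Collecting these facts, discarding the diffusion term, and letting $\delta\to0$ (monotone convergence for the parabolic term, vanishing of the drift, and the uniform bound on the source) gives
\[
\io|\un(t)|\dx x\le\io|u_{0,n}|\dx x+\int_0^t\io|f|\dx x\dx t\le\|u_0\|_{L^1(\Omega)}+\|f\|_{L^1(\Omega_T)},
\]
where I used $|u_{0,n}|=|T_n(u_0)|\le|u_0|$, which is exactly \eqref{elleuno}. I expect the only delicate step to be the treatment of the non-coercive drift: it is here that the superlinear structure enters, the gain $\delta^{\theta}$ (or, in the borderline case, dominated convergence) compensating the singular factor $1/\delta$ produced by testing with an approximate sign; all remaining ingredients are the standard sign-testing estimates and hold uniformly in $n$, yielding the claimed bound $M_0$.
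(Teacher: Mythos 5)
Your proof is correct and is essentially the paper's own argument: the test function $S_\delta(\un)=\delta^{-1}T_\delta(\un)$ is exactly the normalized truncation the paper uses, and the key point in both is that on the support of $\nabla T_\delta(\un)$ one has $|\un|^{\theta+1}\le\delta^{\theta+1}$, so the drift contribution carries a net positive power of $\delta$ and vanishes since $\theta>0$. The only cosmetic difference is that the paper absorbs the gradient factor via Young's inequality into the retained ellipticity term (leaving $\delta^{2\theta}\|E\|^2_{L^2}/\alpha$), whereas you bound $\delta^{\theta}\int|E||\nabla\un|$ directly using the fixed-$n$ finiteness of that integral; both are valid.
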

\begin{proof}
  Let $t \in (0,T)$ and let $u_n$ be  solution of $(\mathcal P_n)$. We test  \eqref{pn} by $\epsilon^{-1}T_\epsilon(u_n)\chi_{(0,t)}$.  Denoting
\be\label{tunnel}
\Phi_{\epsilon}(u):=\frac{1}{\epsilon}\int_0^u T_{\epsilon}(z)dz
\ee
we have
\begin{equation}\label{14:53bis}
\begin{split}
\int_\Omega  & \Phi_{\epsilon}\left(u_n(t)\right)dx
-
\int_\Omega \Phi_{\epsilon}\left(u_0\right)dx
+\frac{\alpha}{{\epsilon}} \int_{\Omega_t}|\nabla T_{\epsilon}(u_n)|^2dxds
\\
\le&\frac{1}{\epsilon} \int_{\Omega_t}|\un|^{\theta+1} |E||\nabla T_{\epsilon}(u_n)|dxds+\int_{\Omega_t}  |f|   dxds\\
\le&\frac{\alpha}{2{\epsilon}} \int_{\Omega_t}|\nabla T_{\epsilon}(u_n)|^2dxds+ \frac {\epsilon^{2\theta} }\alpha   \int_{\Omega_t}|E|^2dxds+\int_{\Omega_t}  |f|   dxds,
\end{split}
\end{equation}
where we used Young inequality. Dropping the (positive) energy term, we get
\[
\int_\Omega   \Phi_{\epsilon}\left(u_n(t)\right)dx\le \int_\Omega \Phi_{\epsilon}\left(u_0\right)dx+ \frac {\epsilon^{2\theta} }\alpha \int_{\Omega_t}|E|^2dxds+\int_{\Omega_t}  |f|   dxds.
\]
Observing that
\[
\Phi_{\epsilon} (u_n)=\frac{\min\{|u_n|,\epsilon\}^2}{2\epsilon}+\max\{|u_n|,\epsilon\}-\epsilon,
\] 
we can pass to the limit as $\epsilon\to 0$ and conclude that, for a.e. $t\in (0,T)$,
\begin{equation}\label{5}
\begin{split}
\|
u_n(t)
\|_{L^1(\Omega)}
\le \|  u_0 \| _{L^1(\Omega)}+\| f \| _{L^1(\Omega_T)}
\end{split}
\end{equation}
so that \eqref{elleuno} is proved.

\end{proof}

\begin{remark}
We explicitly observe that Proposition \ref{stimaL1} implies that, for any $h\in (0,T)$,
\begin{equation}\label{8}
\|u_n\|_{L^1(\Omega_h)}=\int_0^{ h }\int_{\Omega}|\un(t)|\le  h  M_0.
\end{equation}
which in turn implies the following uniform estimate of the superlevel sets of  $u_n$:
\begin{equation}\label{6}
\begin{split}
|
\{
(t,x)\in\Omega_T\colon |u_n(t,x)|>k
\}
|
\le \frac{T M_0} k
\quad \text{for all $k>0$.}
\end{split}
\end{equation}
\end{remark}

Now we provide a comparison principle for reasonably well behaved solutions. Notice that the only global property we need to assume is $C([0,T]; \elle1)$.
\begin{teo}\label{comparison}
Take $\theta>0$, $f,g\in L^1(\Omega_T)$, and $E\in L^2(\Omega_T)$.
Let $v,w \in C([0,T]; \elle1)\cap  W^2_{loc}(0,T)$ be such that $|E|(|v|^{\theta+1}+|w|^{\theta+1})\in L^2_{loc}((0,T);\elle2)$ and 
\begin{equation*}
\int_{t_1}^{t_2}\langle \partial_t v,\varphi \rangle+\int_{t_1}^{t_2}\int_{\Omega} \big(M\nabla v\nabla\varphi+E|v|^{\theta}v\nabla\varphi \big)\le \int_{t_1}^{t_2}\int_{\Omega}f\varphi ,
\end{equation*}
\begin{equation*}
\int_{t_1}^{t_2}\langle \partial_t w,\varphi \rangle+\int_{t_1}^{t_2}\int_{\Omega} \big(M\nabla w\nabla\varphi + E|w|^{\theta}w\nabla\varphi\big) \ge \int_{t_1}^{t_2}\int_{\Omega}g\varphi,
\end{equation*}
for every $0<t_1<t_2<T$ and every $\varphi\in W^2(0,T)\cap L^{\infty}(\Omega_T) $, $\varphi\geq 0$ in $\Om_T$.
Assume moreover that 
\[
 (|v|^{\theta}+|w|^{\theta})|E|\in L^2(\Omega_T).
\]
Therefore, it follows that for any $\tau \in[0,T]$
\[
\int_{\Omega}(v(\tau)-w(\tau))_+\le\int_0^{\tau}\int_{\Omega}(f-g)\chi_{v>w}+\int_{\Omega}(v(0)-w(0))_+
\]
\end{teo}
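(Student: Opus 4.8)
The plan is to prove the $L^1$-contraction estimate by testing the (subtracted) weak formulations with a regularized version of the Heaviside function of $v-w$, namely $\varphi = H_\delta(v-w)$ where $H_\delta$ is a smooth, nondecreasing approximation of $\chi_{\{s>0\}}$ (for instance $H_\delta(s)=\min\{s_+/\delta,1\}$), and then letting $\delta\to 0$. First I would subtract the inequality for $w$ from the inequality for $v$; since the test function $H_\delta(v-w)$ is nonnegative, bounded, and lies in $W^2(0,T)\cap L^\infty(\Omega_T)$ (here one uses that $v,w\in W^2_{loc}(0,T)$ so that $v-w$ has the requisite regularity and $H_\delta$ is Lipschitz), the combined inequality reads
\[
\int_{t_1}^{t_2}\langle \partial_t(v-w),H_\delta(v-w)\rangle + \int_{t_1}^{t_2}\int_\Omega M\nabla(v-w)\nabla H_\delta(v-w) + \int_{t_1}^{t_2}\int_\Omega E\big(|v|^\theta v-|w|^\theta w\big)\nabla H_\delta(v-w) \le \int_{t_1}^{t_2}\int_\Omega (f-g)H_\delta(v-w).
\]

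I would treat the three terms on the left separately. For the parabolic term, the standard integration-by-parts identity for the duality pairing gives $\int_{t_1}^{t_2}\langle\partial_t(v-w),H_\delta(v-w)\rangle = \int_\Omega \mathcal H_\delta((v-w)(t_2)) - \int_\Omega \mathcal H_\delta((v-w)(t_1))$, where $\mathcal H_\delta$ is the primitive of $H_\delta$; as $\delta\to 0$ this converges to $\int_\Omega (v-w)_+(t_2) - \int_\Omega (v-w)_+(t_1)$, using the $C([0,T];L^1)$ regularity to evaluate at the endpoints and eventually send $t_1\to 0$, $t_2\to\tau$. For the diffusion term, since $H_\delta$ is nondecreasing we have $\nabla H_\delta(v-w)=H_\delta'(v-w)\nabla(v-w)$ and hence $M\nabla(v-w)\nabla H_\delta(v-w)=H_\delta'(v-w)\,M\nabla(v-w)\nabla(v-w)\ge 0$ by ellipticity \eqref{mcon}; thus this term is nonnegative and can simply be dropped. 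For the right-hand side, dominated convergence yields $\int (f-g)H_\delta(v-w)\to\int(f-g)\chi_{\{v>w\}}$.

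The main obstacle is the drift term $\int E(|v|^\theta v-|w|^\theta w)\nabla H_\delta(v-w)=\int H_\delta'(v-w)\,E\big(|v|^\theta v-|w|^\theta w\big)\nabla(v-w)$, and the goal is to show it vanishes as $\delta\to 0$. The factor $H_\delta'(v-w)=\tfrac1\delta\chi_{\{0<v-w<\delta\}}$ is supported on the thin set where $0<v-w<\delta$. On this set the monotonicity of $s\mapsto |s|^\theta s$ controls $|v|^\theta v-|w|^\theta w$ by a quantity that is $O(\delta\cdot\text{(local data)})$ for $\theta\le 1$, or more carefully one writes $|v|^\theta v-|w|^\theta w=(v-w)\int_0^1 (\theta+1)|w+t(v-w)|^\theta\,dt$ so that the integrand becomes $(\theta+1)H_\delta'(v-w)(v-w)\big(\int_0^1|\cdots|^\theta dt\big)E\nabla(v-w)$, and $H_\delta'(v-w)(v-w)\le 1$ is uniformly bounded while being supported on $\{0<v-w<\delta\}$, whose measure tends to zero. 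I would then estimate the drift term by Cauchy–Schwarz, splitting off $\big(\int_{\{0<v-w<\delta\}}|E|^2(|v|^\theta+|w|^\theta)^2\big)^{1/2}\big(\int_{\{0<v-w<\delta\}}|\nabla(v-w)|^2\big)^{1/2}$; the first factor tends to $0$ by absolute continuity of the integral, using precisely the global hypothesis $(|v|^\theta+|w|^\theta)|E|\in L^2(\Omega_T)$, while the second is bounded on $(t_1,t_2)$ thanks to $v,w\in W^2_{loc}(0,T)$. This is exactly where the assumption $(|v|^\theta+|w|^\theta)|E|\in L^2(\Omega_T)$ is used to absorb the superlinear drift, and it is the delicate point of the argument; the remaining passages to the limit are routine. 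Collecting the limits gives $\int_\Omega(v-w)_+(\tau)\le \int_0^\tau\int_\Omega(f-g)\chi_{\{v>w\}}+\int_\Omega(v(0)-w(0))_+$, as claimed.
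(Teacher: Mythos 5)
Your proof is correct and follows essentially the same route as the paper: your test function $H_\delta(v-w)=\min\{(v-w)_+/\delta,1\}$ is exactly the paper's $\epsilon^{-1}T_\epsilon\big((v-w)_+\big)$, and your treatment of the time-derivative term (via the primitive), of the right-hand side, and of the limits $\delta\to0$, $t_1\to0$ coincides with theirs. The only (harmless) variation is in the drift term, where the paper keeps half of the diffusion energy via Young's inequality and then uses $|v-w|\le\epsilon$ on the support to obtain an explicit $O(\epsilon)$ bound, whereas you drop the diffusion term and conclude by Cauchy--Schwarz together with absolute continuity of $\int|E|^2(|v|^\theta+|w|^\theta)^2$ over the shrinking set $\{0<v-w<\delta\}$; both arguments use the same global hypothesis $(|v|^\theta+|w|^\theta)|E|\in L^2(\Omega_T)$ and both close.
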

\begin{proof}
We fix $0<\delta<\tau<T$, $z=(v-w)_+$ and use 
 $\epsilon^{-1}T_\varepsilon (z)$ as a test function in the inequalities satisfied by $v$ and $w$. Recalling \eqref{tunnel}, we get
\begin{equation*}
\begin{split}
&\int_{\Omega}\Phi_{\epsilon}(z)(\tau)dx
+\frac{\alpha}{\epsilon}\int_{\delta}^{\tau}\int_{\Omega} |\nabla T_\varepsilon (z)|^2dxdt \\ 
&\le \frac{1}{\epsilon} \int_{\delta}^{\tau}\int_{\Omega} |E| \left||v|^{\theta}v -|w|^{\theta}w\right| |\nabla T_\varepsilon (z)|dxdt
+ \frac1{\epsilon}\int_{\delta}^{\tau}\int_{\Omega}(f-g)T_\varepsilon (z)dxdt+\int_{\Omega}\Phi_{\epsilon}(z)(\delta)dx
\end{split}
\end{equation*}
Since $T_{\epsilon}z=T_\varepsilon(v-w)_+$ is supported in $\{0<z<\epsilon\}$, it follows that
\begin{equation*}
\begin{split}
\int_{\Omega}\Phi_{\epsilon}(z)(\tau)dx
&+\frac{\alpha}{2\epsilon}\int_{\delta}^{\tau}\int_{\Omega} |\nabla T_\varepsilon (z)|^2dxdt \\ 
&\le \frac{1}{2\alpha\epsilon}\int_{\delta}^{\tau}\int_{\Omega} |E|^2 \left||v|^{\theta}v -|w|^{\theta}w)\right|^2 \chi_{0<z<\epsilon}dxdt\\
&+ \frac1{\epsilon}\int_{\delta}^{\tau}\int_{\Omega}(f-g)T_\varepsilon (z)dxdt+\int_{\Omega}\Phi_{\epsilon}(z)(\delta)dx
\end{split}
\end{equation*}
As $\left||v|^{\theta}v -|w|^{\theta}w)\right|\le c(|v|^{\theta}+|w|^{\theta})|v-w|$,
the above inequality becomes
\begin{equation*}
\begin{split}
\int_{\Omega}\Phi_{\epsilon}(z)(\tau)dx
\le& \tilde c \epsilon \int_{\delta}^{\tau}\int_{\Omega} |E|^2 \left(|v|^{2 \theta }+|w|^{2 \theta }\right)\chi_{0<z<\epsilon} dxdt\\
+& \frac1{\epsilon} \int_{\delta}^{\tau}\int_{\Omega}(f-g)T_\varepsilon (z)dxdt+\int_{\Omega}\Phi_{\epsilon}(z)(\delta)dx
\end{split}
\end{equation*}
Therefore, taking at first the limit as $\epsilon\to 0$, it follows that
\[
\int_{\Omega}(v(\tau)-w(\tau))_+\le\int_{\delta}^{\tau}\int_{\Omega}(f-g)\chi_{v>w}+\int_{\Omega}(v(\delta)-w(\delta))_+.
\]
Finally, thanks to the continuity assumption $C([0,T]; \elle1)$, we let $\delta\to0$ and obtain the desired result. 
\end{proof}

\section{Proof of Theorems \ref{thetasmalf=0} and \ref{thetabigf=0}} \label{differentialineq}

In the present section we provide a proof of Theorem \ref{thetasmalf=0} and Theorem \ref{thetabigf=0}. 
As already said in the introduction, we shall obtain a differential inequality satisfied by $y(t)=\io |u_n(t)|^m$. The first step to deduce such an inequality formally consists in testing \eqref{pn} with $|u_n|^{m-2}u_n$ with $m>1$. However, such a choice of test function is not allowed if $m\in(1,2)$. Moreover, we also need to now that $y(t)$ admits (weak) derivative. The next Lemma addresses these technical issues (here we adapt to our framework Lemma 2.5 of \cite{Porr2001}).

\begin{lemma}\label{19:03} Take a function $v\in W^{2}_{loc}(0,T)$ such that $|E||v|^{\theta+1}\in L^2_{loc}((0,T);\elle2)$ and that
\[
\int_{t_1}^{t_2}\langle \partial_t v,\varphi \rangle\dx t+\int_{t_1}^{t_2}\int_{\Omega} \big(M\nabla v+E\tilde g(v) \big)\nabla\varphi\dx x \dx t=0,
\]
for every $0<t_1<t_2<T$ and all $\varphi\in L^{2}_{loc}(0,T, W^{1,2}_0(\Omega))\cap L^{\infty}(\Omega_T)$, where $|\tilde g(v)|\le |v|^{\theta+1
}$. If, for a given $m>1$,
\be\label{19:03}
v\in L^{\infty}_{loc}((0,T);\elle{m}) \quad \mbox{ and } \quad |E|^2|v|^{2 \theta +m}\in L^1_{loc}((0,T),\elle1),
\ee
it follows that $y(t)=\io |v(t)|^m$ belongs to $W^{1,1}_{loc}(0,T)$ and verifies
\be\label{testcontr}
\frac{d}{dt}\io  | v(t)|^{ m}+\alpha\frac{2\mathcal{S}^2 }{m'}\left( \io  | v(t)|^{m\frac{2^*}2}\right)^{\frac2{2^*}}\le m\frac{m-1}{2\alpha}\io |E|^2 | v(t)|^{2 \theta +m}.
\ee

\end{lemma}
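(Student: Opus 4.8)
The plan is to test the weak formulation with (a regularization of) $\varphi=|v|^{m-2}v$, whose primitive is $\frac1m|v|^m$, so that the time term reproduces $\frac1m\frac{d}{dt}y(t)$. This choice is only formal: for $1<m<2$ the map $s\mapsto|s|^{m-2}s$ is singular at the origin, and in no case is it bounded or a priori in $L^2_{loc}(0,T;W^{1,2}_0(\Omega))$, as admissibility requires. I would therefore introduce a regularized family $\psi_{k,\delta}$ of $s\mapsto|s|^{m-2}s$ that is globally Lipschitz and bounded, obtained by truncating at level $k$ (for boundedness at large values) and, when $1<m<2$, by replacing $|s|^{m-2}s$ with its linear extension $\delta^{m-2}s$ on $\{|s|<\delta\}$ (to kill the singularity at $0$). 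For each such $\psi_{k,\delta}$ one has $\psi_{k,\delta}(v)\in L^2_{loc}(0,T;W^{1,2}_0(\Omega))\cap L^\infty(\Omega_T)$, since $\psi_{k,\delta}$ is bounded and $\nabla\psi_{k,\delta}(v)=\psi_{k,\delta}'(v)\nabla v\in L^2_{loc}$; hence it is an admissible test function.

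Denoting by $\Psi_{k,\delta}$ the primitive of $\psi_{k,\delta}$ with $\Psi_{k,\delta}(0)=0$, the chain rule for the Gelfand triple $W^{1,2}_0(\Omega)\subset L^2(\Omega)\subset W^{-1,2}(\Omega)$ gives
\[
\int_{t_1}^{t_2}\langle\partial_t v,\psi_{k,\delta}(v)\rangle\,dt=\io\Psi_{k,\delta}(v(t_2))-\io\Psi_{k,\delta}(v(t_1)).
\]
For the second-order term I use ellipticity \eqref{mcon} and $\psi_{k,\delta}'\ge0$ to bound it below by $\alpha\int_{t_1}^{t_2}\io\psi_{k,\delta}'(v)|\nabla v|^2$. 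For the drift term I use $|\tilde g(v)|\le|v|^{\theta+1}$ and Young's inequality in the form
\[
|E||v|^{\theta+1}\psi_{k,\delta}'(v)|\nabla v|\le\frac{1}{2\alpha}|E|^2|v|^{2\theta+2}\psi_{k,\delta}'(v)+\frac{\alpha}{2}\psi_{k,\delta}'(v)|\nabla v|^2,
\]
so that exactly one half of the diffusion energy is absorbed on the left. After multiplying through by $m$ this leaves $\frac{\alpha m}{2}\int_{t_1}^{t_2}\io\psi_{k,\delta}'(v)|\nabla v|^2$ on the left and produces, in the limit, the source term $m\frac{m-1}{2\alpha}\io|E|^2|v|^{2\theta+m}$ of \eqref{testcontr}.

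I would then let $\delta\to0$ and $k\to\infty$. Since $v(t)\in L^m(\Omega)$ for a.e.\ $t$, monotone convergence gives $\io\Psi_{k,\delta}(v(t))\to\frac1m\io|v(t)|^m$, while $\psi_{k,\delta}'(v)\to(m-1)|v|^{m-2}$ a.e.; the source term converges by dominated convergence using the hypothesis $|E|^2|v|^{2\theta+m}\in L^1_{loc}$, and Fatou's lemma controls the diffusion energy from below. The resulting finiteness of $\int\int|v|^{m-2}|\nabla v|^2$ legitimizes applying the Sobolev inequality to $|v|^{m/2}\in W^{1,2}_0(\Omega)$: using $\nabla|v|^{m/2}=\frac m2|v|^{m/2-1}\nabla v$ one finds $\int|v|^{m-2}|\nabla v|^2=\frac{4}{m^2}\int|\nabla|v|^{m/2}|^2\ge\frac{4\mathcal S^2}{m^2}\big(\io|v|^{m\frac{2^*}2}\big)^{2/2^*}$, and since $\frac{m-1}{m}=\frac1{m'}$ the left-hand diffusion term becomes exactly $\alpha\frac{2\mathcal S^2}{m'}\big(\io|v|^{m\frac{2^*}2}\big)^{2/2^*}$. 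This yields the time-integrated form of \eqref{testcontr} on every interval $(t_1,t_2)\subset(0,T)$.

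Once the energy is known to be finite, the chain-rule identity also passes to the limit, showing that $t\mapsto\frac1m\io|v(t)|^m$ equals a fixed value plus the time integral of an $L^1_{loc}$ function; hence $y(t)=\io|v(t)|^m\in W^{1,1}_{loc}(0,T)$, and differentiating in $t_2$ gives \eqref{testcontr}. The step I expect to be most delicate is precisely this control of the limits: regularizing near the origin for $1<m<2$, guaranteeing that the Young-absorbed diffusion energy survives on the left (where Fatou, not weak convergence, is what one needs), and ensuring the a priori finiteness of $\int\int|v|^{m-2}|\nabla v|^2$ so that the Sobolev step and the passage to an absolutely continuous $y$ are both justified.
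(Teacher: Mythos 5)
Your proposal is correct and follows essentially the same route as the paper's proof: both regularize the formal test function $|v|^{m-2}v$ by truncating at large values and desingularizing near the origin (the paper uses the one-parameter family $\psi_\varepsilon(s)=\frac1{m-1}[(T_{1/\varepsilon}(|s|)+\varepsilon)^{m-1}-\varepsilon^{m-1}]\,\mathrm{sign}(s)$ rather than your two-parameter truncation-plus-linear-extension), absorb half the energy by Young, pass to the limit via Fatou and dominated convergence using hypothesis \eqref{19:03}, apply Sobolev to $|v|^{m/2}$, and obtain $y\in W^{1,1}_{loc}$ from the limiting integral identity. The only presentational difference is that the paper applies Sobolev at the $\varepsilon$-level (where the truncated power is manifestly in $W^{1,2}_0(\Omega)$) before passing to the limit, which sidesteps the small justification you would otherwise need that $|v|^{m/2}\in W^{1,2}_0(\Omega)$.
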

\begin{proof}
For $\eps>0$, we take the globally Lipschitz function $\psi_\varepsilon(s)$ and its integral function $\Psi_\varepsilon(s)$:
\[
\psi_\varepsilon(s):=\frac1{m-1}\left[ (T_{1/\eps}(|s|)+\varepsilon)^{m-1} - \varepsilon^{m-1} \right]{\rm sign}(s) \quad \mbox{and}\quad \Psi_\varepsilon(s):=\int_0^s \psi_\varepsilon(v)dv.
\]
Therefore we can chose $\psi_\varepsilon ( v) \chi_{(t_1,t_2)}$, with $0<t_1<t_2<T$, as a test function in \eqref{pn}. Notice that
\be\label{proprigra}
\nabla\psi_\varepsilon ( v)=(T_{1/\eps}(|v|)+\varepsilon)^{m-2}\nabla v \, \chi_{|v|<1/\eps} =(|v|+\varepsilon)^{m-2}\nabla T_{1/\eps}( v).
\ee
Therefore, it follows that
\[
\begin{split}
&\quad \int_\Omega 	 \Psi_\varepsilon( v(t_2))dx + \alpha \int_{t_1}^{t_2} \int_{\Omega} |\nabla  T_{1/\eps}(v)|^2  (| v|+\varepsilon)^{m-2} dx dt
\\
& \le
 \int_{t_1}^{t_2} \int_{\Omega} |E|| v|^{\theta+1} |\nabla  T_{1/\eps}(v)|  (| v|+\varepsilon)^{m-2} dx dt
+ \int _ \Omega \Psi_\varepsilon( v(t_1))dx.
\end{split}
\]
Using Young inequality in the drift term and recalling \eqref{proprigra}, it follows that
\begin{equation}\label{stima2}
\begin{split}
& \quad \int_\Omega 	 \Psi_\varepsilon( v(t_2))dx + \frac{\alpha}{2}\int_{t_1}^{t_2} \int_{\Omega} |\nabla  T_{1/\eps}(v)|^2  (| v|+\varepsilon)^{m-2} dx dt\\
& \le \frac{1}{2\alpha} \int_{t_1}^{t_2} \int_{\Omega} |E|^2(T_{1/\eps}(|v|)+\epsilon)^{2 \theta +m}dx dt  + \int _ \Omega \Psi_\varepsilon(v(t_1))dx .
\end{split}
\end{equation}
Thanks to assumption \eqref{19:03} and since $|\Psi_\varepsilon(v)|\le C(m)(|v|^m +1)$, it follows that the right hand side above is bounded for almost all $t_1\in(0,t_2)$.
Therefore, taking the limsup in \eqref{stima2} as $\eps\to0$,  Fatou's Lemma implies that $|\nabla  v|^2  | v|^{m-2}\in L^1_{loc}((0,T),\elle1)$.\\
On the other hand, taking $\psi_{\eps}( v)\zeta$ as a test function in \eqref{pn}, with $\zeta\in C^{\infty}_c(0,T)$, one deduces that
\be\label{11:31}
\frac{d}{dt}\io \Psi_{\eps}( v)dx =-\io \big(M\nabla v+ E\tilde g(v)\big)\cdot\nabla T_{1/\eps}( v)(|v|+\varepsilon)^{m-2} dx 
\ee
in the sense of distributions. Since we know that $|\nabla  v|^2  | v|^{m-2}, |\nabla v|^2\in L^1_{loc}((0,T),\elle1)$ and thanks to assumptions \eqref{19:03}, we deduce that the right hand side of \eqref{11:31} (that is function of $t$) is controlled by an element in $L^1_{loc}(0,T)$ uniformly as $\eps\to0$. Therefore, the dominated converge theorem implies that the left hand side of \eqref{11:31} strongly converges in $L^1(0,T)$ as $\eps\to0$. Since $\Psi_{\eps}(s)\to \frac{|s|^m}{m(m-1)}$, this gives that $y(t)=\io| v(t)|^m$ belongs to $W^{1,1}_{loc}(0,T)$. Going back to \eqref{11:31} and using Young and Sobolev inequalities, we deduce that
\be\label{11:31bis}
\begin{split}
\frac{d}{dt}\io \Psi_{\eps}( v)dx +&\alpha\mathcal{S}^2\frac{2}{m^2}\left(\io|( T_{1/\eps}(|v|)+\eps)^{\frac m2}-\eps^{\frac m2}|^{2^*}dx\right)^{\frac{2}{2^*}}\\
\le& \frac{1}{2\alpha}\io|E|^2(T_{1/\eps}(|v|)+\epsilon)^{2 \theta +m}dx.
\end{split}
\ee
Taking the limit as $\eps\to0$ we get \eqref{testcontr}.
\end{proof}

Now we are ready to prove the following proposition.

\begin{prop}\label{propthetasmalf=0}
Let $\un$ be sequence of the approximating problems \eqref{pn}, assume that $f\equiv0$, $\theta\le 1/N$, $r\in(N,\infty]$, and that \eqref{thetasmallbis} holds true.
If $E\in L^{\infty}((0,T);L^r(\Omega))$, and $u_0\in \elle{\mu}$ with $\mu\in[1,\infty)$, it follows that
\be\label{pre17:03}
\|\un(t)\|_{\elle \mu}\le \|u_0\|_{\elle \mu} e^{C_1 t} \qquad \mbox{ for }t\in (0,T),
\ee
with $C_1=C_1(\alpha, N, \theta, \mu, r,|E|, \|u_0\|_{\elle 1})$. Moreover for any given $m>\mu$
\begin{equation}\label{17:03}
\|u_n(t)\|_{L^m (\Omega )}\leq  \|u_0\|_{L^{\mu}(\Omega)}\frac{ e^{C_2 (T+1)}}{t^{\frac N2 (\frac{1}{\mu}-\frac 1m)} },\qquad \mbox{ for }t\in (0,T),
\end{equation}
with $C_2=C_2(\alpha, N, \theta, m, \mu, r,|E|, \|u_0\|_{\elle1})$.
\end{prop}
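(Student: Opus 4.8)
The plan is to derive, for each fixed $n$, a differential inequality for $y(t):=\io|\un(t)|^\mu$ and then integrate it; since $f\equiv0$ the case $\mu=1$ is already contained in Proposition \ref{stimaL1}, so I would assume $\mu>1$. First I would apply Lemma \ref{19:03} to $v=\un$ with $m=\mu$: because $\un\in L^\infty(\Omega_T)$ and $|E|\in L^\infty((0,T);\elle r)$ the hypotheses of Lemma \ref{19:03} hold, and $\tilde g=g_n$ satisfies $|g_n(s)|\le|s|^{\theta+1}$, so \eqref{testcontr} yields
\[
y'(t)+c_0\Big(\io|\un(t)|^{\frac{\mu N}{N-2}}\Big)^{\frac{N-2}{N}}\le c_1\io|E|^2|\un(t)|^{2\theta+\mu},
\]
with $c_0,c_1$ depending on $\alpha,N,\mu$. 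Writing $b:=\frac{\mu N}{N-2}$, the Sobolev term on the left is exactly $c_0\|\un(t)\|_{\elle b}^\mu$.

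The core of the argument is to absorb the drift term on the right into $c_0\|\un\|_{\elle b}^\mu$ up to a multiple of $y$. After extracting $\|E\|_{\elle r}^2$ by H\"older (the remaining integrand being $|\un|^{q}$ with $q=(2\theta+\mu)\tfrac{r}{r-2}$, and one checks $\mu<q<b$), I would split the integral over $\{|\un|\le k\}$ and $\{|\un|>k\}$. On the low set I bound $|\un|^{2\theta}\le k^{2\theta}$, leaving $k^{2\theta}\io|E|^2|\un|^\mu$; since $r>N$ this piece is strictly \emph{subcritical}, so interpolating $\elle{\mu r/(r-2)}$ between $\elle\mu$ and $\elle b$ and using Young gives $\varepsilon\|\un\|_{\elle b}^\mu+C(k,\varepsilon)\,y$. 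On the high set I use precisely the balance \eqref{thetasmallbis}: interpolating $\|\un\chi_{\{|\un|>k\}}\|_{\elle q}$ between $L^1$ and $\elle b$, the interpolation exponent $\lambda$ satisfies $\lambda(2\theta+\mu)=\mu$ (an identity equivalent to $\frac1r+\theta=\frac1N$), so the high part is controlled by $\|E\|_{\elle r}^2\big(\int_{\{|\un|>k\}}|\un|\big)^{2\theta}\|\un\|_{\elle b}^\mu$. Choosing $k$ large so that the coefficient of $\|\un\|_{\elle b}^\mu$ drops below $c_0/2$ then produces
\[
y'(t)+\tfrac{c_0}{2}\|\un(t)\|_{\elle b}^\mu\le C_1\,\mu\,y(t),
\]
and discarding the nonnegative Sobolev term and integrating (note $\io|u_{0,n}|^\mu\le\|u_0\|_{\elle\mu}^\mu$) gives \eqref{pre17:03}.

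For the smoothing bound \eqref{17:03} I would rerun the previous step with exponent $m>\mu$ in place of $\mu$ (the balance \eqref{thetasmallbis} is independent of the integration exponent, so the same splitting works), obtaining $Y'+\frac{c_0}{2}\|\un\|_{\elle{b_m}}^m\le C_2\,m\,Y$ for $Y:=\io|\un|^m$ and $b_m=\frac{mN}{N-2}$. Then I would turn the retained Sobolev term into a power of $Y$: interpolating $\|\un\|_{\elle m}$ between $\elle\mu$ and $\elle{b_m}$ and using the already proved bound $\|\un\|_{\elle\mu}\le\|u_0\|_{\elle\mu}e^{C_1 t}$ to control the lower norm, one gets $\|\un\|_{\elle{b_m}}^m\ge K(t)\,Y^{1+a}$ with $a=\frac{2\mu}{N(m-\mu)}$, hence $Y'+K\,Y^{1+a}\le C_2\,m\,Y$. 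Lemma \ref{odeode} then yields $Y(t)\le(aK)^{-1/a}t^{-1/a}e^{C_2 m t}$, and since $\frac{1}{am}=\frac N2\big(\frac1\mu-\frac1m\big)$ while $K^{-1/(am)}$ carries the factor $\|u_0\|_{\elle\mu}e^{C_1 T}$, taking the $m$-th root gives exactly \eqref{17:03}.

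The main obstacle is the absorption of the high-level part of the drift. Because \eqref{thetasmallbis} is the \emph{critical} balance, the naive estimate produces a term carrying the same power $\|\un\|_{\elle b}^\mu$ as the diffusion together with an a priori large constant, and this can only be beaten by making $\int_{\{|\un|>k\}}|\un|$ small \emph{uniformly in $n$ and $t$}. I therefore expect the real technical work to be a uniform equi-integrability estimate for $\{\un\}$: the data $u_{0,n}=T_n(u_0)$ are dominated by $u_0\in\elle\mu$, hence uniformly equi-integrable, and this must be propagated in time through the superlinear drift (for instance by testing \eqref{pn} with a truncation of $G_k(\un)$ and bounding the resulting drift contribution). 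Everything else—H\"older, interpolation, Young, and the two ODE lemmas—is routine once this uniform smallness is available.
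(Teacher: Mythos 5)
Your overall architecture (the differential inequality from Lemma \ref{19:03}, absorption of the drift into the Sobolev term, then Lemma \ref{odeode} after interpolating between $\mu$ and $m2^*/2$) coincides with the paper's, and your treatment of the low set and of the smoothing estimate \eqref{17:03} is sound. The problem is the high set. You split by the level of $u_n$ and arrive at a coefficient $\|E\|_{\elle r}^2\big(\int_{\{|u_n|>k\}}|u_n|\big)^{2\theta}$ multiplying $\|u_n\|_{\elle b}^{\mu}$, and you correctly observe that the absorption only works if $\int_{\{|u_n|>k\}}|u_n|$ can be made small \emph{uniformly in $n$ and $t$}. You then defer this to ``a uniform equi-integrability estimate for $\{u_n\}$'' to be ``propagated in time,'' but you never prove it, and it does not follow from the only uniform bound actually available at this stage, namely $\|u_n(t)\|_{\elle1}\le\|u_0\|_{\elle1}$ from Proposition \ref{stimaL1}. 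Using the $\elle\mu$ bound to control the tail would be circular (it is the estimate being proved), and a genuine propagation-in-time argument through the superlinear drift is nontrivial and is not sketched. So as written the proof has a real hole at its central step.

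The paper avoids this issue entirely by splitting over $\{|E||u_n(t)|^{\theta}\le\rho\}$ and $\{|E||u_n(t)|^{\theta}>\rho\}$ instead of over levels of $u_n$. On the low set one simply bounds $|E|^2|u_n|^{2\theta}\le\rho^2$, and on the high set H\"older with exponents $r/2$, $1/(2\theta)$, $2^*/2$ (legitimate exactly because of \eqref{thetasmallbis}) gives
\[
\Big(\int_{\{|E||u_n(t)|^{\theta}>\rho\}}|E|^{r}\Big)^{2/r}\,\|u_n(t)\|_{\elle1}^{2\theta}\,\|u_n^{m/2}\|_{\elle{2^*}}^{2},
\]
so the $u_n$ factor is the \emph{full} $L^1$ norm, controlled by $\|u_0\|_{\elle1}$, and the required smallness is carried instead by $\int_A|E|^r$ over a set $A$ whose measure is small: Chebyshev plus the $L^1$ bound give $|\{|E||u_n(t)|^{\theta}>\rho\}|\le\|E\|_{L^\infty(0,T,\elle2)}^2\rho^{-1}+\|u_0\|_{\elle1}\rho^{-1/(2\theta)}$ uniformly in $n$ and $t$. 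In other words, the tail-smallness burden is shifted from the unknown $u_n$ to the fixed datum $E$. If you adopt this decomposition your argument closes; otherwise you must actually supply the uniform equi-integrability of $u_n(t)$ in $L^1$, which is the missing idea.
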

\begin{remark}
Notice that if $\mu=1$, we already know from Proposition \ref{stimaL1} that $\|\un(t)\|_{\elle1}\le \|u_0\|_{\elle1}$, namely, estimate \eqref{pre17:03} holds true with $C_1=0$.
\end{remark}
\begin{proof}The first ingredient we need is a decay estimate on the measure of a suitable super level set. For any $\rho>0$, we have that
\begin{equation*}
\begin{split}
&|\{x\in \Omega \ : \ |E||\un(t)|^{\theta}>\rho\}|\\ = &|\{x\in \Omega \ : \ |E||\un(t)|^{\theta}>\rho, |E|\ge \rho^{ 1/2}\}|+ |\{x\in \Omega \ : \ |E||\un(t)|^{\theta}>\rho, |E|< \rho^{ 1/2}\}|\\
\le& |\{x\in \Omega \ :\ |E|\ge \rho^{ 1/2}\}|+ |\{x\in \Omega \ : \ |\un(t)|^{\theta}>\rho^{1/2}\}|\\
\le & \|E\|_{L^{\infty}(0,T,L^{ 2}(\Omega))}^{2} \frac{1}{\rho}+\|u_0\|_{L^{1}(\Omega)}\frac{1}{\rho^{\frac{1}{2\theta}}},
\end{split}
\end{equation*}
with the right hand side above vanishing as $\rho$ tends to $+\infty$ uniformly with respect to $t$. Moreover, since $\un\in L^{\infty}(\Omega_T)$, we can apply Lemma \ref{19:03} with $v=\un$ and $\tilde g= g_n$ (the one used in \eqref{pn}). Therefore we have, for any $m>1$,
\[
\frac{d}{dt}\io  | \un(t)|^{ m}+\alpha\frac{2\mathcal{S}^2 }{m'}\left( \io  | \un(t)|^{m\frac{2^*}2}\right)^{\frac2{2^*}}\le m\frac{m-1}{2\alpha}\io |E|^2 | \un(t)|^{2 \theta +m}.
\]
Let us decompose the first integral on the right hand side above as follows
\be\label{29-4}
\begin{split}
& \quad \io |E|^2 |\un(t)|^{2 \theta } |\un(t)|^{ m}\\
 &\le \rho^2\int_{\{|E| |\un(t)|^{\theta}\le \rho\}} |\un(t)|^{ m}+\int_{\{|E| |\un(t)|^{\theta}> \rho\}}|E|^2 |\un(t)|^{2\theta} |\un(t)|^{ m}\\
& \le\rho^2\io |\un(t)|^{ m}+\left(\int_{\{|E| |\un(t)|^{\theta}> \rho\}}|E|^{ r}\right)^{\frac{2}{ r}}   \|u_0\|_{\elle{1}}^{2 \theta }\|\un^{\frac{ m}{2}}\|_{\elle{2^*}}^{2} 
\end{split}
\ee
Let us explicitly stress that in the last line we used $\theta\le 1/N$ and assumption \eqref{thetasmallbis} in order to apply  H\"older inequality. Thanks to the decay estimate on $|\{|E| |\un(t)|^{\theta}> \rho\}|$, we chose $\overline\rho=\overline{\rho}(\alpha, N, \theta, \mu, r,|E|, \|u_0\|_{\elle 1})$ large enough to conclude that
\be \label{19:05}
\frac{d}{dt}\io  |\un(t)|^{ m}+\alpha\frac{\mathcal{S}^2 }{m'}\left( \io  |\un(t)|^{m\frac{2^*}2}\right)^{\frac2{2^*}}\le m \frac{m-1}{2\alpha} \overline{\rho}^2  \io |\un(t)|^{ m}.
\ee

Let us now prove estimate \eqref{pre17:03}.  If $\mu=1$ the result follows by directly by Lemma \ref{stimaL1} (with $f\equiv0$) and with $C_1=0$. If $\mu>1$, we take $m\equiv \mu$ in \eqref{19:05}, dropp the positive term on the left hand side, and integrate. Therefore, we get
\[
\| u_n(t)\|_{\elle \mu}\le \| u_0\|_{\elle \mu} e^{C_1 t},
\]
with $C_1=\frac{(\mu-1)}{2\alpha} \overline{\rho}^2$. 

To prove estimate \eqref{17:03}, let us take $m>\mu$. Using the interpolation inequality between $\mu$ and $m2^*/2$, it follows that 
\[
\begin{split}
\left( \io  |\un(t)|^{m}\right)^{1+\frac{2\mu}{N(m-\mu)}}&\le\left( \io  |\un(t)|^{\mu}\right)^{\frac{2m}{N(m-\mu)}}\left( \io  |\un(t)|^{m\frac{2^*}2}\right)^{\frac2{2^*}}\\
&\le \left(  \| u_0\|_{\elle \mu}    e^{C_1 T}     \right)^{\frac{2m\mu}{N(m-\mu)}}\left( \io  |\un(t)|^{m\frac{2^*}2}\right)^{\frac2{2^*}}.
\end{split}
\]
Plugging such an estimate into \eqref{19:05}, we deduce that the differential inequality satisfied by $y(t)=\io |\un(t)|^m$ becomes
\[
y'+K y^{1+a}\le C_1 y
\]
with $a=\frac{2\mu}{N(m-\mu)}$, $K=\alpha\frac{\mathcal{S}^2 }{m'}\left(  \| u_0\|_{\elle \mu}    e^{C_1 T}     \right)^{-\frac{2m\mu}{N(m-\mu)}}$ and $C=  C_1$. We can now apply Lemma \ref{odeode} to conclude that
\[
y(t)\le\left(\frac{1}{Ka}\right)^{1/a}\frac{e^{C_1t}}{t^{1/a}}= \left(\frac{m'}{\alpha\mathcal{S}^2 a}\right)^{1/a} \| u_0\|_{\elle \mu}^m \frac{e^{C_1(1+m)T}}{t^{\frac{N(m-\mu)}{2\mu}}}.
\]

\end{proof}
Let us now provide the proof of Theorem  \ref{thetasmalf=0}.

\begin{proof}[Proof of Theorem \ref{thetasmalf=0}] \textbf{Step 1}. Let us assume at first $\mu\ge 2$. Therefore, estimate \eqref{pre17:03} implies that
\[
\|u_n(t)\|_{\elle{2}}\le  \|u_0\|_{\elle{2}}  e^{C_1T} \quad t\in(0,T).
\]
Taking $\un \chi_{(0,T)}$ as a test function in \eqref{pn}, it follows that
\[
\io \un^2(T)dx+\frac{\alpha}{2}\int_{0}^{T}\io|\nabla \un|^2 dxdt \le \int_{0}^{T}\io |E|^2|\un|^{2(\theta+1)}dxdt+\io u_0^2dx
\]
Using estimate \eqref{29-4} with $m=2$, it follow that
\[
\int_{\Omega}|\un(t)|^{2(\theta+1)}|E|^2dx\le\rho^2\io |\un(t)|^{2}+\left(\int_{\{|E| |\un(t)|^{\theta}> \rho\}}|E|^{ r}\right)^{\frac{2}{ r}}   \|u_0\|_{\elle{1}}^{2 \theta }\|\nabla \un\|_{\elle{2}}^{2}.
\]
Choosing, $\rho=\rho(E, u_0,\theta,\alpha)$ large enough, we have that
\be\label{infondo}
\int_{0}^{T}\io |\un|^{2(\theta+1)}|E|^2dxdt \le T \rho^2 \|u_0\|_{\elle{2}}e^{C_1T}+\frac{\alpha}{2}\int_{0}^{T}\io|\nabla \un|^2 dxdt.
\ee
Therefore, we conclude that the sequence $\un$ is bounded in $L^2((0,T), W^{1,2}_0(\Omega))$ and that the sequence $|\un|^{\theta+1}|E|$ is bounded in $L^2(\Omega_T)$. Going back to the equation solved by $\un$, we deduce that $\partial_t u_n$ is bounded in $L^2((0,T), W^{-1,2}(\Omega))$. This imply that there exists $u\in W^2(0,T)$ such that, up to subsequences, $\un\rightharpoonup u$ in $W^2(0,T)$. Standard compactness results also assure that $\un\to u$ strongly in $L^2(\Omega_T)$ and $a.e.$ and that $u\in C([0,T], L^2(\Omega))$.  Therefore we can pass to the limit in \eqref{pn} and in the estimates \eqref{pre17:03}-\eqref{17:03}, as $n\to \infty$, to obtain a solution of \eqref{introintro} with the desired properties. 

\textbf{Step 2}. To deal now with the case $\mu\in[1,2)$, let us consider the sequence $v_n\in W^2(0,T)$ of solutions to
\be\label{9:35}
\int_0^T\langle \partial_t v_n ,\varphi \rangle dt
+
 \int_{\Omega_T} \big(M(t,x) \nabla v_n+ |v_n|^{\theta}v_n E\big) \cdot \nabla \varphi \dx x\dx t
=  0, 
\ee
with $v_n(0,x)=T_n(u_0)$ and $\varphi\in L^{2}(0,T, W^{1,2}_0(\Omega))\cap L^{\infty}(\Omega_T)$. The existence of such a family of solutions is assured by Step 1 (see also Remark \ref{helpingremark}).  Applying the comparison principle provided by Theorem \ref{comparison} to $v_n$ and $v_m$, we get that
\[
\int_{\Omega}|v_n(t,x)-v_m(t,x)|dx\le \int_{\Omega}|T_n(u_0)(x)-T_m(u_0)(x)|dx,
\]
that implies that the sequence $v_n$ is Cauchy in $C([0,T], L^1(\Omega))$.\\ Now we claim that we can apply Lemma \ref{19:03} with the choice $v=v_n$ and $\tilde g(s)=|s|^{\theta}s$. Clearly we have $v_n\in W^{2}_{loc}(0,T)$. Moreover, thanks again to Step 1, we know that for any $q>2$
\[
\|v_n(t)\|_{L^q (\Omega )}\leq  C(T)\|u_0\|_{\elle2}t^{-\frac N2 (\frac{1}{2}-\frac 1q)}\qquad \mbox{ for } t\in (0,T).
\]
However since $\Omega$ is bounded this implies that
\[
v_n\in L^{\infty}_{loc}((0,T), \elle q) \quad \forall q\ge1.
\]
This concludes the proof of the claim. Therefore, we have that for any $m>1$, $v_n$ satisfies
\[
\frac{d}{dt}\io  | v_n(t)|^{ m}+\alpha\frac{2\mathcal{S}^2 }{m'}\left( \io  | v_n(t)|^{m\frac{2^*}2}\right)^{\frac2{2^*}}\le m\frac{m-1}{2\alpha}\io |E|^2 | v_n(t)|^{2 \theta +m}.
\]
Following the very same procedure of Proposition \ref{propthetasmalf=0} (we omit further details for the sake of brevity) we deduce that, for any $m\ge\mu$,
\be\label{9:34}
 \|v_n(t)\|_{L^m (\Omega )} \le  C \|u_0\|_{L^\mu (\Omega )}t^{-\frac N2 \big(\frac1\mu-\frac 1m\big)} \qquad \mbox{ for }t\in (0,T).
 \ee
Therefore, taking $v_n \chi_{(\epsilon,T)}$ with $\epsilon>0$ as a test function in \eqref{9:35}, it follows that
\[
\io v_n^2(T)dx+\frac{\alpha}{2}\int_{\epsilon}^{T}\io|\nabla v_n|^2 dxdt \le \int_{\epsilon}^{T}\io |v_n|^{2(\theta+1)}|E|^2|dxdt+\io v_n(\epsilon)^2dx.
\]
Taking $m=2^*>2>\mu$ in \eqref{9:34}, we deduce that
\[
\|v_n(t)\|_{L^{2^*} (\Omega )}\leq C \|u_0\|_{L^{\mu}(\Omega)}  \epsilon^{\frac N2 (\frac 1{2^*}-\frac{1}{\mu})},
\]
that implies
\be\label{infondobis}
\begin{split}
\int_\epsilon^{T}\int_{\Omega}|v_n(t)|^{2(\theta+1)}|E|^2dxdt&\le\int_0^{T} \|v_n(t)\|_{\elle{2^*}}^2 \|v_n(t)\|_{\elle1}^{2(\theta+1)-2}\|E(t)\|_{\elle r}^2dt\\
&\le C(\epsilon,T) \|u_0\|_{L^{\mu}(\Omega)}^2\|u_0\|_{\elle1}^{2(\theta+1)-2}\|E\|_{L^{\infty}((0,T),\elle r)}^2,
\end{split}
\ee
with the constant $C(\epsilon,T)$ that diverges as $\epsilon\to0$. All this information says that $|E||v_n|^{\theta+1}$ is bounded in $L^2_{loc}(0,T, \elle2)$, $v_n$ is bounded in $L^2_{loc}(0,T, W^{1,2}_0(\Omega))$, and that  $\partial_t v_n$ is bounded in $L^2_{loc}(0,T, W^{-1,2}(\Omega))$. Therefore, there exists $u\in W^2_{loc}(0,T)\cap C([0,T],\elle 1))$  limit of the sequence $v_n$. Passing to the limit as $n\to \infty$ in \eqref{pn}, we conclude that $u$ is a distributional solution of our problem. Applying Fatou's Lemma in \eqref{9:34} provides us also the required estimate on $\|u(t)\|_{\elle m}$.

In both Steps, uniqueness follows by the comparison principle (Theorem \ref{comparison}), the fact that $u\in  W^2_{loc}(0,T)$, $|E||u|^{\theta+1}\in L^2_{loc}(0,T,\elle2)$, and Remark \ref{helpingremark}.
\end{proof}

\begin{prop}\label{12:15prop}
 Take $\theta>0$,  $r\in(N,\infty]$, and $\mu\in[1,\infty)$ such that assumption \eqref{thetabig} holds true.
Assume that $f\equiv 0$, $E\in L^{\infty}((0,\infty);L^r(\Omega))$, and $u_0\in \elle{\mu}$.
Then, there exists $T^{*}>0$ that depends on $\alpha, N,\mu, r, \theta, \|E\|_{L^{\infty}(0,\infty,L^{ r}(\Omega))}, \|u_0\|_{\elle{\mu}}$ such that, if $\un$ are the solution to \eqref{pn} with $T= T^*$, it follows that
\be\label{13:35}
\|\un(t)\|_{\elle{\mu}}\le \|u_0\|_{\elle{\mu}}\left(\frac{T^*}{T^*-t }\right)^{\gamma} \qquad \mbox{ for }t\in (0,T^*),
\ee
for some positive exponent $\gamma=\gamma(\mu, r, N)$. Moreover for any given $m>\mu$
\be\label{13:35bis}
 \|u_n(t)\|_{L^m (\Omega )} \le  \|u_0\|_{L^{\mu}(\Omega)}\frac{C}{t^{\frac N2 \big(\frac1\mu-\frac 1m\big)}}h(t) \qquad \mbox{ for }t\in (0,T^*),
\ee
with $C=C(\alpha, N, \theta, m, \mu, r, \|E\|_{L^{\infty}(0,\infty,L^{ r}(\Omega))})$ and  $h\in C([0,T^*))$ an increasing function such that $h(0)=1$ and $\lim_{t\to T^*} h(t)=+\infty$.
\end{prop}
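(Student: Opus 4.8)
The plan is to make rigorous the formal differential-inequality scheme on the approximating solutions $\un$, exactly as in the proof of Proposition \ref{propthetasmalf=0}, but now keeping track of the genuinely superlinear ODE that \eqref{thetabig} produces. Since $\un\in L^\infty(\Omega_T)$ solves \eqref{pn} with $f\equiv0$ and $|g_n(s)|\le|s|^{\theta+1}$, every integrability hypothesis in Lemma \ref{19:03} is automatically satisfied for all $m>1$; applying it with $v=\un$ yields the fundamental inequality \eqref{testcontr}, whose only problematic term is $\io|E|^2|\un|^{2\theta+m}$.

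The key step is to control that term by H\"older together with interpolation. I would first use H\"older with $|E|^2\in L^{r/2}(\Omega)$, producing $\|E\|_{\elle r}^2$ times a power of $\|\un\|_{\elle p}$ with $p=(2\theta+m)\frac r{r-2}$, and then interpolate $\|\un\|_{\elle p}$ between $\elle\mu$ and $\elle{m2^*/2}$, the second exponent being exactly the one on the left-hand side of \eqref{testcontr}. Writing $\lambda\in(0,1)$ for the interpolation weight, a direct computation shows that the absorption condition $(1-\lambda)(2\theta+m)<m$ is, for every $m\ge\mu$, equivalent to assumption \eqref{thetabig}; thus \eqref{thetabig} is precisely what allows Young's inequality to move the energy contribution to the left-hand side of \eqref{testcontr}.

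Taking $m=\mu$ and dropping the (nonnegative) energy term after absorption leaves a closed superlinear inequality
\[
y'\le Q\,y^{1+d},\qquad y(t)=\io|\un(t)|^\mu,\qquad d=\frac{2\theta/\mu}{1-N(\theta/\mu+1/r)}>0 ,
\]
with $Q=Q(\alpha,N,\theta,\mu,r,\|E\|)$. Integrating this Bernoulli ODE explicitly shows the bound propagates up to the finite time $T^*=\big(Q\,d\,\|u_0\|_{\elle\mu}^{\mu d}\big)^{-1}$, which is uniform in $n$ because $\|u_{0,n}\|_{\elle\mu}\le\|u_0\|_{\elle\mu}$ and the solution map of the ODE is monotone in the initial value, and yields \eqref{13:35} with $\gamma=\frac1{\mu d}$.

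For \eqref{13:35bis} with $m>\mu$ I would repeat the H\"older--interpolation--Young step for general $m$, absorbing only half of the energy term, and convert the remaining half, via the interpolation inequality between $\elle\mu$ and $\elle{m2^*/2}$ used at the end of the proof of Proposition \ref{propthetasmalf=0}, into a dissipative term $c\,y_m^{1+a}\|\un\|_{\elle\mu}^{-\kappa}$ with $a=\frac{2\mu}{N(m-\mu)}$. Inserting the already-proved bound \eqref{13:35} for $\|\un\|_{\elle\mu}$ turns this into a Bernoulli-type inequality $y_m'+a(t)\,y_m^{1+a}\le b(t)$ whose coefficients are comparable to constants near $t=0$ and blow up as $t\to T^*$; solving it (either directly, or by adapting Lemma \ref{odeode} to time-dependent coefficients) produces the factorized bound $\|\un(t)\|_{\elle m}\le C\,\|u_0\|_{\elle\mu}\,t^{-\frac N2(\frac1\mu-\frac1m)}h(t)$, the power of $t$ coming from the constant-coefficient analysis near the origin (the heat-equation rate) and the increasing factor $h$, with $h(0)=1$ and $\lim_{t\to T^*}h(t)=+\infty$, recording the blow-up inherited from \eqref{13:35}. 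I expect this last ODE analysis to be the main obstacle: one must cleanly separate the short-time smoothing power $t^{-\frac N2(1/\mu-1/m)}$ from the near-$T^*$ blow-up contained in $h$ and verify the stated monotonicity and limits of $h$, the difficulty being that here, unlike in Proposition \ref{propthetasmalf=0}, the coefficient multiplying the dissipative term is itself unbounded as $t\to T^*$.
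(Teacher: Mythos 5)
Your overall strategy coincides with the paper's: apply Lemma \ref{19:03} to $\un$ (legitimate because $\un\in L^\infty(\Omega_{T})$), close the resulting differential inequality by H\"older, interpolation and Young, read off $T^*$ and \eqref{13:35} from the Bernoulli inequality $y'\le C_\mu y^{1+b}$ at $m=\mu$ (your formula for $T^*$ is the correct one), and for $m>\mu$ reinstate a dissipative term via the interpolation between $\elle{\mu}$ and $\elle{m2^*/2}$.

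The one place where your sketch has a genuine gap is exactly the step you flag as ``the main obstacle'', and the paper resolves it with two devices you do not mention. First, after the H\"older--interpolation--Young step the right-hand side of \eqref{13:09} is $C_m y^{1+d}$ with $d=\frac{2r\theta}{mr-mN-Nr\theta}$, i.e.\ still superlinear in $y$ rather than a pure source $b(t)$; the strict inequality in \eqref{thetabig} guarantees $a=\frac{2\mu}{N(m-\mu)}>d$, and the elementary bound $C_m s^{d}-\frac{K}{2}s^{a}\le Q$ for all $s\ge0$ is what turns $y'+Ky^{1+a}\le C_m y^{1+d}$ into $y'+\frac{K}{2}y^{1+a}\le Qy$. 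You never invoke $a>d$, but without it (or a substitute such as your alternative interpolation against $\elle{\mu}$, which then needs its own Young-exponent check) the superlinear forcing cannot be eliminated. Second, no time-dependent version of Lemma \ref{odeode} is needed: for $0<t<\tau<T^*$ one bounds $\|\un(t)\|_{\elle{\mu}}\le\|u_0\|_{\elle{\mu}}(T^*/(T^*-\tau))^{\gamma}$ uniformly on $(0,\tau)$, so that $K=K(\tau)$ and $Q=Q(\tau)$ are constants on that interval; Lemma \ref{odeode} then applies verbatim on $(0,\tau)$, and evaluating its conclusion at $t=\tau$ gives \eqref{13:35bis} with $h(\tau)$ collecting the factors $e^{Q(\tau)\tau}$ and $(T^*/(T^*-\tau))^{m/(b\mu)}$, which indeed is increasing, equals $1$ at $\tau=0$, and diverges as $\tau\to T^*$. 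With these two observations your argument closes; as written, the last paragraph is not yet a proof.
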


\begin{proof} 
As before, the fact that $\un\in L^{\infty}(\Omega_T^*)$ allows us to apply Lemma \ref{19:03} with $v=\un$ and deduce that, for any $m>1$,
\be\label{10:00}
\frac{d}{dt}\io  | \un(t)|^{ m}+\alpha\frac{2\mathcal{S}^2 }{m'}\left( \io  | \un(t)|^{m\frac{2^*}2}\right)^{\frac2{2^*}}\le m\frac{m-1}{2\alpha}\io |E|^2 | \un(t)|^{2 \theta +m}.
\ee
Notice that for any $m\ge \mu$ we have that
\[
m<(2(\theta+1)-2+m)\frac{r}{r-2}<\frac{Nm}{N-2},
\] 
where the first inequality follows from the facts that $r>N$ and $\theta>0$, while the second one is a consequence of assumption \eqref{thetabig}. Therefore, it follows that
\[
\begin{split}
&\quad \int_{\Omega} |E|^2 |\un(t)|^{2 \theta +m}\\
&\le \|E(t)\|_{L^{r}(\Omega)}^{2}\left(\int_{\Omega} |\un(t)|^{(2(\theta+1)-2+m)\frac{r}{r-2}}\right)^{1-\frac2r}\\
&\le \|E(t)\|_{L^{r}(\Omega)}^{2} \left(\int_{\Omega} |\un(t)|^{m}\right)^{\frac{rm-Nm-r \theta (N-2)}{rm}}\left(\int_{\omega} |\un(t)|^{\frac{mN}{N-2}}\right)^{\left(\frac{\theta}{m}+\frac1r\right)(N-2)}\\
&\le C_{m} \left(\int_{\Omega} |\un(t)|^{m}\right)^{1+\frac {2r \theta }{mr-mN-Nr \theta }}+\alpha\frac{\mathcal{S}^2 }{m'}\left( \int_{\Omega} |\un(t)|^{m\frac{2^*}2}\right)^{\frac2{2^*}}
\end{split}
\]
with $C_{m}=c(\alpha,N,m)\|E(t)\|_{L^r(\Omega)}^{\frac{2mr}{mr-mN-rN \theta }}$, where we have use H\"older, interpolation, and Young (thanks to assumption \eqref{thetabig} again) inequality in the second, third, and fourth line respectively. 
Plugging this information into \eqref{10:00} leads us to
\begin{equation}\label{13:09}
\frac{d}{dt}\io  |\un(t)|^{ m}+\alpha\frac{\mathcal{S}^2 }{m'}\left( \io  |\un(t)|^{m\frac{2^*}2}\right)^{\frac2{2^*}}
\le C_m \left(\int_{\Omega} |\un(t)|^{m}\right)^{1+\frac {2r \theta }{mr-mN-Nr \theta }}
\end{equation}

To prove \eqref{13:35}, we set $m\equiv \mu$, neglect the second term on the left hand side, and deduce that $y(t)=\io |\un(t)|^{\mu}$ solves
\[
y'\le C_\mu y^{1+b},
\]
with $b=\frac {2r \theta }{\mu r-\mu N-Nr \theta }$. Integration by separation of variables implies that
\[
\frac{1}{y(0)^b}-\frac{1}{y(t)^b}\le \frac{C_{\mu}}{b} t \quad \mbox{for } t\in(0,T^*)
\]
with $T^*=b(C_{\mu} \|u_0\|_{\elle{\mu}}^{b\mu})^{-1}$. Writing such an inequality in terms of the $L^{\mu}$ norm of $\un$ gives us
\[
\|\un(t)\|_{\elle{\mu}}\le \|u_0\|_{\elle{\mu}}\left(\frac{T^*}{T^*-t }\right)^{\frac1{b\mu}} \quad \mbox{for } t\in(0,T^*).
\]

To prove \eqref{13:35bis} we go back to \eqref{13:09} and use once more time interpolation inequality on its second term, to infer that, for $0<t<\tau<T^*$,
\[
\begin{split}
\left( \io |\un(t)|^{m}\right)^{1+\frac{2\mu}{N(m-\mu)}}&\le\left( \io |\un(t)|^{\mu}\right)^{\frac{2m}{N(m-\mu)}}\left( \io |\un(t)|^{m\frac{2^*}2}\right)^{\frac2{2^*}}\\
&\le\left[ \|u_0\|_{\elle \mu}^{\mu} \left(\frac{T^*}{T^*-\tau }\right)^{\frac1{b}}\right]^{\frac{2m}{N(m-\mu)}}\left( \io |\un(t)|^{m\frac{2^*}2}\right)^{\frac2{2^*}}.
\end{split}
\]
Therefore estimate \eqref{13:09} becomes
\[
\frac{d}{dt}\io  |\un(t)|^{ m}+K \left(\int_{\Omega} |\un(t)|^{m}\right)^{1+a}\le C_m \left(\int_{\Omega} |\un(t)|^{m}\right)^{1+d}
\]
with
\[
 a=\frac{2\mu}{N(m-\mu)}, \ \ \   d=\frac {2r \theta }{mr-mN-Nr \theta }, \ \ \  K=\alpha\frac{\mathcal{S}^2 }{m'}\left[ \|u_0\|_{\elle \mu}^{\mu} \left(\frac{T^*}{T^*-\tau }\right)^{\frac1{b}}\right]^{-\frac{2m}{N(m-\mu)}}
\]
Notice that thanks to the strict inequality of assumption \eqref{thetabig} it follows that $a>d$. 
 Setting $y(t)=\int |\un (t)|^m$ we get
\[
y'\le C_m y^{1+d}-K y^{1+a}\le Qy-\frac{K}{2}y^{1+a},
\]
with $Q=(2/K)^{\frac{d}{a-d}}C_m^{\frac{a}{a-d}}(d/a)^{\frac{d}{a-d}}$. The second inequality above follows by the fact that $C_ms^{d}-\frac{K}{2}s^{a}\le Q$ for all $s\ge0$, that can be directly checked. Therefore, we can apply Lemma \ref{odeode} again to conclude that
\[
y(\tau)\le\left(\frac{1}{Ka}\right)^{1/a}\frac{e^{Q\tau}}{\tau^{1/a}}=\left(\frac{m'}{\alpha\mathcal{S}^2 a}\right)^{1/a} \| u_0\|_{\elle \mu}^m \frac{e^{Q \tau}}{\tau^{\frac{N(m-\mu)}{2\mu}}}\left(\frac{T^*}{T^*-\tau }\right)^{\frac{m}{b\mu}}.
\]

\end{proof}

With the previous Proposition at hand, we can now address the proof of Theorem \ref{thetabigf=0}

\begin{proof}[Proof of Theorem \ref{thetabigf=0}] \textbf{Step 1} Consider the sequence $\un$ of solution to \eqref{pn} with $T= T^*$ given by Proposition \ref{12:15prop}. Assume at first that $\mu\ge 2$. 
Taking $|\un|^{\mu-2}\un \chi_{(0,\tau)}$, with $\tau\in(0,T^*)$, as a test funcion in \eqref{pn}, we deduce that
\be\label{15:53}
\begin{split}
 &  \frac{\alpha}{2}\int_{0}^{\tau}\io|\nabla \un|^2|\un|^{\mu-2} dxdt\\  \le& \frac{1}{2\alpha}\int_{0}^{\tau}\io |\un|^{2 \theta }|\un|^{\mu}|E|^2dxdt+\io u_0^{\mu}dx \\
\le&\frac{c(|\Omega|)}{2\alpha}  \int_0^{\tau}\|\un(t)\|_{\elle \mu}^{2(\theta+1)-2}\|\un(t)^{\mu}\|_{\elle{2^*/2}}\|E(t)\|_{\elle r}^2dt+\io u_0^{\mu}dx
\end{split}
\ee
where in the second line we used H\"older inequality and assumption \eqref{thetabig}.
In order to estimate the integral right hand side above, let us recall at first estimate \eqref{13:35}
\[
\|\un(t)\|_{\elle{\mu}}\le \|u_0\|_{\elle{\mu}}\left(\frac{T^*}{T^*-t }\right)^{\gamma} \qquad \mbox{ for }t\in (0,T).
\]
Moreover, integrating formula \eqref{13:09}, between $0$ and $\tau$ with $m=\mu$,  gives that
\[
\begin{split}
\int_0^{\tau}\left( \io  |\un|^{\mu\frac{2^*}2}dx \right)^{\frac2{2^*}}dt
&\le C_\mu \int_0^{\tau} \left(\int_{\Omega} |\un(t)|^{\mu}dx\right)^{1+a}dt+\|u_0\|_{\elle \mu}^{\mu}\\
&\le C\|u_0\|_{\elle{\mu}}^{1+a}\left(\frac{T^*}{T^*-\tau}\right)^{\gamma_1}+\|u_0\|_{\elle \mu}^{\mu}
\end{split}
\]
for suitable exponents $a,\gamma_1$ that depend on $\mu,r, \theta, N$. Therefore estimate \eqref{15:53} becomes (recall that $\mu\ge2$)
\[
\int_{\Omega_{\tau}\cap\{|\un|\ge1\}}|\nabla  \un|^2 dxdt\le\int_{\Omega_{\tau}}|\nabla \un|^2|\un|^{\mu-2} dxdt \le C(\tau).
\]
with $C(\tau)\to\infty$ as $\tau\to T^*$.
On the other hand, if we go back to estimate \eqref{14:53bis}, take $\eps=1$, and recall that here we are assuming $f\equiv0$, it follows that
\[
\int_{\Omega_{\tau}\cap\{|\un|\le1\}}|\nabla  \un|^2 dxdt\le c(\alpha)\big( \|E\|_{L^2(\Omega_T)}+ \|u_0\|_{\elle 1}\big).
\]
These last two estimates allow us to conclude that the sequence $\un$ is bounded in $L^2_{loc}([0,T^*), W^{1,2}_0(\Omega))$. Notice moreover that
\[
\begin{split}
\int_0^{\tau}\io |\un|^{2(\theta+1)}|E|^2\le \|E\|_{L^2(\Omega_{\tau})}^2+\int_{\Omega_{\tau}\cap\{|\un|\ge1\}}|\un|^{2 \theta }|\un|^{2}|E|^2\\
\le \|E\|_{L^2(\Omega_{\tau})}^2+\int_{\Omega_{\tau}\cap\{|\un|\ge1\}}|\un|^{2 \theta }|\un|^{\mu}|E|^2,
\end{split}
\]
with the second integral in the right hand side that can be estimated as at the beginning of the proof.  This means that the sequence $|\un|^{\theta+1}|E|$ is bounded in $L^2_{loc}([0,T^*),L^2(\Omega))$. Therefore, we conclude that also $\partial_t \un$ is bounded in $L^2_{loc}([0,T^*), W^{-1,2}(\Omega))$ and, therefore, $\un$ is bounded in $W_{loc}([0,T^*))$. Summing up, we deduce that there exists $u\in L^2_{loc}([0,T^*), W^{1,2}_0(\Omega))$ such that $\nabla \un \rightharpoonup \nabla u$ in $ L^2_{loc}([0,T^*),(L^2(\Omega))^N)$ and $\un\to u$ in $C([0,T^*),L^2(\Omega))$ and $a.e.$ in $\Omega_{T^*}$. Passing to the limit in \eqref{pn} and in estimates \eqref{13:35}-\eqref{13:35bis}, we deduce that $u$ is a solution of our problem with the desired properties.\\
\textbf{Step 2}  Assume now $\mu\in[1,2)$. Thanks to assumption \eqref{thetabig}, Step 1 provides us with a sequence $v_n\in W^2_{loc}([0,T^*))$ that solve $v_n(0,x)=T_n(u_0)$ and
\[
\int_0^\tau\langle \partial_t v_n ,\varphi \rangle dt
+
 \int_{\Omega_\tau} M(t,x) \nabla v_n \cdot \nabla \varphi \dx x\dx t
=  \int_{\Omega_\tau} |v_n|^{\theta}v_n E \cdot  \nabla \varphi \dx x\dx t 
\]
for any $\tau\in(0,T^*)$ and for any $\varphi\in L^{2}_{loc}(0,T^*, W^{1,2}_0(\Omega))\cap L^{\infty}(\Omega_{T^*})$, where $T^*$ is the one given by Proposition \ref{12:15prop}. Applying the comparison principle provided by Theorem \ref{comparison} to $v_n$ and $v_m$, we get that for all $t\in(0,T^*)$
\[
\int_{\Omega}|v_n(t,x)-v_m(t,x)|dx\le \int_{\Omega}|T_n(u_0)(x)-T_m(u_0)(x)|dx,
\]
that implies that the sequence $v_n$ is Cauchy in $C([0,T^*), L^1(\Omega))$. Now we claim that we can apply Lemma \ref{19:03} with the choice $v=v_n$. Clearly we have $v_n\in W^{2}_{loc}(0,T)$. Moreover, thanks again to Step 1 and the fact that $1/r+ \theta /2<1/r + \theta /\mu<1/N$, we know that for any $q>2$
\[
\|v_n(t)\|_{L^q (\Omega )}\leq  C(t)t^{-\frac N2 (\frac{1}{2}-\frac 1q)} \qquad \mbox{ for } t\in (0,T^*),
\]
where $C(t)\to\infty$ as $t\to T^*$
However since $\Omega$ is bounded this implies that
\[
v_n\in L^{\infty}_{loc}((0,T^*), \elle q) \quad \forall q\ge1.
\]
From this property we end the proof of the claim. Now, following the very same procedure of Proposition \ref{12:15prop} (we omit further details for the sake of brevity), we deduce that 
\be\label{ciaone}
\|u(t)\|_{\elle{\mu}}\le \|u_0\|_{\elle{\mu}}\left(\frac{T^*}{T^*-t }\right)^{\gamma}\qquad  \|v_n(t)\|_{L^m (\Omega )} \le  C(t)t^{-\frac N2 \big(\frac1\mu-\frac 1m\big)},
 \ee
for $t\in(0,T^*)$. Taking $v_n \chi_{(\epsilon,\tau)}$, with $\epsilon>0$ and $\tau<T^*$, as a test function, it follows that
\[
\io v_n^2(\tau)dx+\frac{\alpha}{2}\int_{\epsilon}^{\tau}\io|\nabla v_n|^2 dxdt \le \int_{\epsilon}^{\tau}\io |v_n|^{2(\theta+1)}|E|^2|dxdt+\io v_n(\epsilon)^2dx.
\]
Estimate \eqref{ciaone} with $m=2^*>2>\mu$ reads as
\[
\|v_n(t)\|_{L^{2^*} (\Omega )}\leq C(t)  \|u_0\|_{L^{\mu}(\Omega)}  \epsilon^{\frac N2 (\frac 1{2^*}-\frac{1}{\mu})}
\]
Therefore, we get that
\[
\begin{split}
\int_\epsilon^{\tau}\int_{\Omega}|v_n(t)|^{2(\theta+1)}|E|^2dxdt&\le\int_0^{\tau} \|\un(t)\|_{\elle{2^*}}^2 \|v_n(t)\|_{\elle1}^{2(\theta+1)-2}\|E(t)\|_{\elle r}^2dt\\
&\le C(\epsilon,\tau) \|u_0\|_{L^{\mu}(\Omega)}^2\|u_0\|_{\elle1}^{2(\theta+1)-2}\|E\|_{L^{\infty}((0,T^*),\elle r)}^2,
\end{split}
\]
with the constant $C(\epsilon, T)$ that diverges both as $\epsilon\to 0$ and $\tau\to T^*$. This means that $v_n$ is bounded in $L^2_{loc}((0,T^*), W^{1,2}_0(\Omega))$ and, in turn, that $\partial_t v_n$ is bounded in $L^2_{loc}((0,T^*), W^{-1,2}(\Omega))$. All this information implies that there exists $u\in W^2_{loc}((0,T^*))\cap C([0,T^*),\elle 1))$ limit of the sequence $v_n$ in the associate topology. Passing to the limit as $p\to \infty$ in the weak formulation solved by $v_n$ and in estimates \eqref{ciaone} implies that $u$ is a distributional solution of our problem. 

In both Steps, uniqueness follows by the comparison principle (Theorem \ref{comparison}), the fact that $u\in  W^2_{loc}(0,T^*)$, $|E||u|^{\theta}\in L^2_{loc}((0,T^*);\elle2)$, and Remark \ref{helpingremark}.
\end{proof}

\section{Proof of Theorem \ref{exist2}}
In this section we obtain the existence of solutions to Problem  \eqref{problemintro} in cases where the exponent $\theta$ is not `too far' from 
$0$, proving Theorem \ref{exist2}.

\begin{lemma}\label{Lemmauno} Keep the same assumptions of Theorem \ref{exist2} and let $\{\un\}$ be the sequence of solution to the approximating problems $(\mathcal P_n)$. Therefore, for any $q\in(1,\frac{N+2}{2})$, we have that
\[
\|\un\|_{L^{\infty}(0,T,L^{q^{\star\star}\frac{N}{N+2}}(\Omega))}+ \|\un\|_{L^{q^{\star\star}}(\Omega_T)}\le C_1 \left(\|f\|_{L^q(\Omega_{T})}+\|u_0\|_{L^{q^{\star\star}\frac{N}{N+2}}(\Omega)}\right).
\]
Moreover, for any $q\in(1,\sigma']$ with $\sigma=2\frac{N+2} N$, we have that
\[
 \|\nabla \un\|_{L^{q^{\star}}(\Omega_T)} \le  C_2 \left(\|f\|_{L^q(\Omega_{T})}+\|u_0\|_{L^{q^{\star\star}\frac{N}{N+2}}(\Omega)}\right).
\]
Here the positive constant $C_1,C_2$ depend on $N, T, \alpha, \|f\|_{L^1(\Omega_T)}, \|u_0\|_{L^1(\Omega)}, \|E\|_{L^r(\Omega)}$.
\end{lemma}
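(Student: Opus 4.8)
The plan is to prove both bounds by the power--test--function method combined with the Gagliardo--Nirenberg inequality of Lemma \ref{GN}, in the spirit of \cite{BG,BDGO2}, the only genuinely new ingredient being a time--slicing argument to tame the noncoercive drift. Set $p_0:=q^{\star\star}\frac{N}{N+2}=\frac{Nq}{N+2-2q}$, so that $p_0\frac{N+2}{N}=q^{\star\star}$ and, applying Lemma \ref{GN} to $v=|u_n|^{p_0/2}$,
\[
\|u_n\|_{L^{q^{\star\star}}(\Omega_T)}^{q^{\star\star}}\le C\,S^{2/N}D,\qquad S:=\sup_t\int_\Omega|u_n(t)|^{p_0},\quad D:=\int_{\Omega_T}\big|\nabla|u_n|^{p_0/2}\big|^2.
\]
For the first estimate I would test \eqref{pn} with a regularization of $|u_n|^{p_0-2}u_n$ and integrate on $(0,t)$; coercivity \eqref{mcon} yields an energy inequality $S+D\le C(\text{forcing}+\text{drift})+C\|u_0\|_{L^{p_0}}^{p_0}$. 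The forcing is controlled via the identity $(p_0-1)q'=q^{\star\star}$: one has $\int_{\Omega_T}|f_n||u_n|^{p_0-1}\le\|f\|_{L^q}\|u_n\|_{L^{q^{\star\star}}}^{p_0-1}$, and inserting the Gagliardo--Nirenberg bound shows this quantity has total homogeneity $\frac{N+2}{Nq'}$ in $(S,D)$, which is strictly below $1$ \emph{precisely because} $q<\frac{N+2}{2}$; a weighted Young inequality then absorbs it into $\tfrac12(S+D)$ plus a constant times $\|f\|_{L^q}^{q}$.

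The drift is the crux, and I expect it to be the main obstacle. Using $|g_n(u_n)|\le|u_n|^{\theta+1}$, Young's inequality absorbs half of the energy and leaves $C\int_{\Omega_T}|E|^2|u_n|^{2\theta+p_0}$. Hölder in space--time together with $E\in L^r$ and the balance \eqref{closelin} produces the exponent $(2\theta+p_0)\frac{r}{r-2}\le q^{\star\star}$, so this term is bounded by $C\|E\|_{L^r}^2\|u_n\|_{L^{q^{\star\star}}}^{2\theta+p_0}$; inserting Gagliardo--Nirenberg now gives total homogeneity $1+\frac{2\theta}{p_0}>1$ in $(S,D)$, which is supercritical and cannot be absorbed on the whole cylinder. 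To overcome this I would partition $(0,T)$ into finitely many slices $(\tau_i,\tau_{i+1})$ on which $\|E\|_{L^r(\Omega_{\tau_i,\tau_{i+1}})}$ is as small as we please, which is possible by absolute continuity of the integral. On a slice the supercritical term carries the small factor $\|E\|_{L^r}^2$, so with $Z_i:=S_i+D_i$ the slice inequality reads $Z_i\le\eta_i Z_i^{1+2\theta/p_0}+\tilde K_i$, where $\eta_i$ is small and $\tilde K_i$ is controlled by the data on the slice and by $\int_\Omega|u_n(\tau_i)|^{p_0}$. A standard continuity/bootstrap argument then gives $Z_i\le C\tilde K_i$; since the value at the right endpoint of one slice bounds the initial term of the next, iterating over the finitely many slices produces a bound for $S$ and $D$, hence for $\|u_n\|_{L^{q^{\star\star}}}$, with a constant depending on $\|E\|_{L^r}$ through the number of slices.

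For the gradient estimate I would use the Boccardo--Gallouet device: testing \eqref{pn} with the bounded function $\psi_\delta(u_n)=[1-(1+|u_n|)^{-\delta}]\,\mathrm{sgn}(u_n)$ (treating the drift again by the same slicing) yields
\[
\int_{\Omega_T}\frac{|\nabla u_n|^2}{(1+|u_n|)^{1+\delta}}\le C.
\]
Writing $|\nabla u_n|^{q^{\star}}=\frac{|\nabla u_n|^{q^{\star}}}{(1+|u_n|)^{(1+\delta)q^{\star}/2}}(1+|u_n|)^{(1+\delta)q^{\star}/2}$ and applying Hölder with exponents $\frac{2}{q^{\star}}$ and $\frac{2}{2-q^{\star}}$ (legitimate since $q\le\sigma'$ forces $q^{\star}\le2$), the first factor is the bounded weighted energy, while the second is finite once $\delta$ is chosen so that $\frac{(1+\delta)q^{\star}}{2-q^{\star}}\le q^{\star\star}$, which is exactly the first estimate. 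This gives $\|\nabla u_n\|_{L^{q^{\star}}(\Omega_T)}\le C$ and closes the proof; the dependence of $C_1,C_2$ on $\|f\|_{L^1}$, $\|u_0\|_{L^1}$ and $\|E\|_{L^r}$ enters through Proposition \ref{stimaL1} and through the number of time slices.
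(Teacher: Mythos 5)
Your overall architecture (power test functions $\sim|u_n|^{p_0-2}u_n$ with $p_0=2\gamma+2=q^{\star\star}\tfrac{N}{N+2}$, Gagliardo--Nirenberg, Young absorption of the forcing via $(p_0-1)q'=q^{\star\star}$, time slicing, and the weighted-energy/H\"older device for the gradient with $q^{\star}\le 2$) coincides with the paper's, and those parts are sound. The genuine gap is exactly where you predicted it: the drift term. You apply a two-factor H\"older inequality, $\int|E|^2|u_n|^{2\theta+p_0}\le C\|E\|_{L^r}^2\|u_n\|_{L^{q^{\star\star}}}^{2\theta+p_0}$, which produces the \emph{supercritical} power $2\theta+p_0>p_0$, and you then try to close the estimate by making $\|E\|_{L^r}$ small on each slice and invoking a continuity/gap bootstrap for $Z_i\le \eta_i Z_i^{1+2\theta/p_0}+\tilde K_i$. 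This does not close in general: the gap argument requires $\eta_i\tilde K_i^{2\theta/p_0}$ to be small, but $\tilde K_i$ contains $\int_\Omega|u_n(\tau_i)|^{p_0}$ and therefore grows geometrically (like $C^i$) in the slice index, while the number of slices needed to make $\|E\|^2_{L^r(\text{slice})}\le\eta$ grows only polynomially in $1/\eta$. For large data or large $\|E\|_{L^r(\Omega_T)}$ the required smallness can never be reached by refining the partition, so your scheme would at best yield the statement under a smallness condition on the data --- which is Theorem \ref{thmSte}, not Lemma \ref{Lemmauno}.

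The paper's fix is a \emph{three}-factor H\"older inequality: using \eqref{closelin} and $\tfrac{2\gamma+2}{q^{\star\star}}=\tfrac{N}{N+2}$ one splits
\[
\int_{\Omega_h}|E|^2 T_k(|u_n|)^{2\theta+2\gamma+2}\le \|E\|^2_{L^r(\Omega_h)}\,\|u_n\|^{2\theta}_{L^1(\Omega_h)}\,\|T_k(u_n)\|^{2\gamma+2}_{L^{q^{\star\star}}(\Omega_h)},
\]
and then invokes the \emph{uniform-in-time} $L^1$ bound of Proposition \ref{stimaL1}, which gives $\|u_n\|_{L^1(\Omega_h)}\le hM_0$ with $M_0=\|u_0\|_{L^1}+\|f\|_{L^1(\Omega_T)}$ a global constant. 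The small factor is thus $h^{2\theta}M_0^{2\theta}\|E\|^2_{L^r(\Omega_T)}$, the drift term keeps the \emph{critical} exponent $2\gamma+2$ (same as the left-hand side), and it is absorbed directly once $h$ is fixed by $Ah^{2\theta}\|E\|^2_{L^r(\Omega_T)}M_0^{2\theta}\le\tfrac14$ --- a choice made once and for all, independent of the slice index, so the subsequent finite induction over the $\lceil T/h\rceil$ slices is free of any circularity. This use of the $L^1$ estimate to linearize the drift's contribution on short time intervals is the missing idea in your proposal; the same device is what lets you run your Boccardo--Gallou\'et argument for the gradient bound without reintroducing the supercritical power.
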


\begin{proof} Take $q\in(1,\frac{N+2}{N})$ and consider the function $ \psi_{\varepsilon}(v)$ defined as
\[
\psi_\varepsilon(v):=\frac{1}{2\gamma+1}\left[ (T_k(|v|)+\varepsilon)^{2\gamma+1} - \varepsilon^{2\gamma+1} \right]{\rm sign}(v),
\]
where  $\varepsilon >0$ and $\gamma$ given by,
\begin{equation}\label{gammadef}
\gamma=\frac{qN-2N-4+4q}{(N+2-2q)2}.
\end{equation}
In the considered range of $q$, the main properties of $\gamma$ are
\be\label{gammaproperties}
2\gamma+1>0, \quad 2\gamma+2=q^{\star\star}\frac{N}{N+2}, \quad q'(2\gamma+1)=q^{\star\star},
\ee
and  $\gamma>2$ if and only if $ q>2\frac{N+2}{N+4}$. We also define, for any $u\in\mathbb R$,
$$\Psi_\varepsilon(u):=\int_0^u \psi_\varepsilon(v)dv.$$
Take $\tau \in (0,h)$, with $h\in(0,T)$ to be chosen in the sequel, and use $\psi_\varepsilon (u_n) \chi_{(0,\tau)}$ as a test function in \eqref{pn}.
We obtain
\begin{equation}\label{est1}
\begin{split}
&\int_\Omega 	 \Psi_\varepsilon(u_n(\tau))dx + \alpha \int_{\Omega_\tau} |\nabla T_k(u_n)|^2  (|u_n|+\varepsilon)^{2\gamma} dx dt
\\
 \le&
\int_{\Omega_\tau} |E||u_n|^{\theta+1} |\nabla T_k(u_n)|  (|u_n|+\varepsilon)^{2\gamma} dx dt
+\int_{\Omega_\tau} |f||\psi_\varepsilon(u_n)|dx dt + \int _ \Omega \Psi_\varepsilon(u_0)dx\\
 \le& \frac{1}{2\alpha} \int_{\Omega_\tau} |E|^2(T_k(|\un|)+\epsilon)^{2(\theta+1)+2\gamma}dx dt+ \frac{\alpha}2 \int_{\Omega_\tau} |\nabla T_k(u_n)|^2  (|u_n|+\varepsilon)^{2\gamma} dx dt\\
&\quad +\int_{\Omega_\tau} |f||\psi_\varepsilon(u_n)|dx dt + \int _ \Omega \Psi_\varepsilon(u_0)dx,
\end{split}
\end{equation}
where we applied Young inequality in the drift term.
Summing up the similar terms and taking the supremum with respect to $\tau\in(0,h)$, \eqref{est1} becomes
\begin{equation}\label{est2}
\begin{split}
\sup_{\tau\in(0,{ h })}\int_\Omega 	& \Psi_\varepsilon(u_n(\tau))dx +  \int_{\Omega_{ h }} |\nabla T_k(u_n)|^2  (T_k(|u_n|)+\varepsilon)^{2\gamma} dx dt\\
& \le C_{\alpha,q}\left[ \int_{\Omega_{ h }} |E|^2(|\un|+\epsilon)^{2(\theta+1)+2\gamma} +\int_{\Omega_{ h }} |f||\psi_\varepsilon(u_n)| + \int _ \Omega \Psi_\varepsilon(u_0)dx \right].
\end{split}
\end{equation}
Notice now that the left hand side above is bounded with respect to $\varepsilon$ (at this stage both $n$ and $k$ are fixed). 
Namely,
\[
\frac{1}{(\gamma+1)^2}\int_{\Omega_{ h }} |\nabla [(T_k(|u_n|)+\varepsilon)^{1+\gamma}-\varepsilon^{1+\gamma}]|^2  dx dt =\int_{\Omega_{ h }} |\nabla T_k(u_n)|^2  (|u_n|+\varepsilon)^{2\gamma} dx dt\le C.
\]
Therefore, the sequence $\phi_{\epsilon}=(|u_n|+\varepsilon)^{1+\gamma}-\varepsilon^{1+\gamma}$ is bounded in $L^2(0,h;W^{1,2}_0(\Omega))$ and it converges $a.e$ in $\Omega_T$ to $|\un|^{\gamma+1}$. Therefore, using Fatou's Lemma on the left and the dominate convergence Theorem on the right, we can take $\epsilon\to 0$ in \eqref{est2}, to get
\begin{equation}\label{est2bis}
\begin{split}
\sup_{\tau\in(0,h)}\int_{\Omega}&T_k(|\un({\tau})|)^{2(\gamma+1)}+\int_{\Omega_{ h }} |\nabla (T_k(u_n)^{\gamma+1})|^2\le\\ 
\le& C_{\alpha,q}\left[\int_{\Omega_{ h }} |E|^2T_k(|u_n|)^{2(\theta+1)+2\gamma} +\int_{\Omega_{ h }} |f|T_k(|u_n|)^{2\gamma+1} + \int _ \Omega |u_0|^{2\gamma+2}dx\right].
\end{split}
\end{equation}
Applying Gagliardo-Nirenberg inequality to the function $T_k(|u_n|)^{\gamma+1}$, it follows that
\begin{equation}\label{est3}
\begin{split}
 \|T_k(\un)\|_{L^{q^{\star\star}} (\Omega_{ h }) }^{2\gamma+2}& = \left(\int_{\Omega_{ h }}   T_k(|u_n|)^{\sigma(\gamma+1)}\right)^{\frac{N}{N+2}}\\
& \le C_N \left( \sup_{\tau\in(0, h )}\int_{\Omega}T_k(|\un({ h })|)^{2(\gamma+1)}\right)^{\frac{2}{N+2}} \left(\int_{\Omega_{ h }} |\nabla (T_k(|u_n|)^{\gamma+1})|^2\right)^{\frac{N}{N+2}} \\ 
&\le A \left[ \int_{\Omega_{ h }} |E|^2T_k(|u_n|)^{2(\theta+1)+2\gamma} +\int_{\Omega_{ h }} |f|T_k(|u_n|)^{2\gamma+1} + \int _ \Omega |u_0|^{2\gamma+2}dx\right],
\end{split}
\end{equation}
for some positive constant $A=A(N,\alpha,q)$.
In order to estimate the contribution of the drift term in the right hand side above, let us recall \eqref{8}. Therefore, it follows that,
\begin{equation*}
\begin{split}
\int_{\Omega_{ h }} |E|^2T_k(|u_n|)^{2(\theta+1)+2\gamma}&\le 
\|E\|^2_{L^r(\Omega_{ h })} 
\left\|u_n\right\|^{2 \theta }_{L^1(\Omega_{ h })}
\left\|  T_k(u_n)\right\|^{2\gamma+2}_{L^{q^{\ast\ast}}(\Omega_{ h })}\\
&\le  h ^{2\theta} \|E\|^2_{L^r(\Omega_T)} M_0^{2 \theta } \left\|  T_k(u_n)
\right\|^{2\gamma+2}_{L^{q^{\ast\ast}}(\Omega_h)},
\end{split}
\end{equation*}
where we have also use H\"older inequality and \eqref{gammaproperties}. Moreover, we have that
\[
\begin{split}
A \int_{\Omega_{ h }} |f|T_k(|u_n|)^{2\gamma+1} & \le \tilde A \|f\|_{L^q(\Omega_T)}
\|T_k(\un)\|_{L^{q^{\star\star}} (\Omega_h) }^{2\gamma+1}\\ 
& \le \tilde A \|f\|_{L^q(\Omega_T)}^{2\gamma+2}+ \frac14 \|T_k(\un)\|_{L^{q^{\star\star}} (\Omega_h) }^{2\gamma+2}.
\end{split}
\]
Plugging this two pieces of information in \eqref{est3}, we obtain
\[
\begin{split}
\|T_k(\un)\|_{L^{q^{\star\star}} (\Omega_{ h }) }^{2\gamma+2}&\le A  h ^{2\theta} \|E\|^2_{L^r(\Omega_T)} M_0^{2 \theta } \left\|  T_k(\un) \right\|^{2\gamma+2}_{L^{q^{\ast\ast}}(\Omega_h)}\\
&+\tilde A \|f\|_{L^q(\Omega_T)}^{2\gamma+2}+ \frac14 \|T_k(\un)\|_{L^{q^{\star\star}} (\Omega_h) }^{2\gamma+2}\\
&+A\int _ \Omega |u_0|^{2\gamma+2}dx.
\end{split}
\]
Choosing $ h $ such that
\begin{equation}\label{sceltah}
 A  h ^{2\theta} \|E\|^2_{L^r(\Omega_T)} M_0^{2\theta}\le \frac14,
\end{equation}
we obtain the following estimate (letting $k\to\infty$)
\[
\|\un\|_{L^{q^{\star\star}} (\Omega_{ h }) }^{2\gamma+2}\le B \left( \|f\|_{L^q(\Omega_T)}^{2\gamma+2}+\int _ \Omega |u_0|^{2\gamma+2}dx\right).
\]

Going back to \eqref{est2bis} and letting again $k\to\infty$, we get
\be \label{dequation}
\sup_{\tau\in (0, h )}\int_{\Omega}|\un({\tau})|^{2(\gamma+1)}+\int_{\Omega_{ h }} |\nabla (|u_n|^{\gamma+1})|^2\le C\left( \|f\|_{L^q(\Omega_T)}^{2\gamma+2}+\int _ \Omega |u_0|^{2\gamma+2}dx \right).
\ee
Clearly, if \eqref{sceltah} is satisfied with $h\equiv T$, the proof of this first part of the Lemma is complete.\\
 If not, we proceed dividing the time interval (0,T) in suitably small slices.  For it, let us take 
\[
h= \frac{1}{ ( 4 A   \|E\|^2_{L^r(\Omega_T)} M_0^{2 \theta })^{1/(2(\theta+1)-2)}},
\]
and set $t_0=0$ and $t_{i}=t_{i-1}+h$ for all $i=1,2,\cdots, k$ where $k$ is the integer part of $T/h$; let us also set $t_{k+1}=T$. We claim that, for any $i\in \mathbb N$ with $i\le k+1$,
\begin{equation}\label{induzione}
\sup_{\tau\in ( t_{i-1} ,t_{i})}\int_{\Omega}|\un({\tau})|^{2(\gamma+1)}+\int_{(t_{i-1},t_i)\times\Omega} |\nabla (|u_n|^{\gamma+1})|^2\le \sum_{j=1}^iD^j \|f\|_{L^q(\Omega_{T})}^{2\gamma+2}+ D^i\int _ \Omega |u_0|^{2\gamma+2}dx,
\end{equation}
where $D$ is the constant in \eqref{dequation}. We have already shown that the claim is true for $i=1$. Therefore assume that \eqref{induzione} holds for some $i=1,\cdots, k$ and let us prove the same inequality for $i+1$. Taking $\psi_{\varepsilon}(\un)\chi_{(t_i,t_{i+1})}$ as a test function in \eqref{pn}, and following the same procedure of the first step of the proof, we obtain that
\[
\sup_{\tau\in (t_i, t_{i+1} )}\int_{\Omega}|\un({\tau})|^{2(\gamma+1)}+\int_{(t_i,t_{i+t})\times\Omega} |\nabla (|u_n|^{\gamma+1})|^2\le D\left( \|f\|_{L^q(\Omega_T)}^{2\gamma+2}+\int _ \Omega |\un(t_{i})|^{2\gamma+2}dx \right).
\]
Let us stress that the last integral makes sense since $\un\in C(0,T,L^{2}(\Omega))$. For the same reason we can deduce from \eqref{induzione} that
\[
\int _ \Omega |\un(t_{i})|^{2\gamma+2}dx \le \sum_{j=1}^iD^j \|f\|_{L^q(\Omega_{T})}^{2\gamma+2}+ D^i \int _ \Omega |u_0|^{2\gamma+2}dx.
\]
Therefore, it follows that
\[
\begin{split}
\sup_{\tau\in (t_i, t_{i+1} )}\int_{\Omega}|\un({\tau})|^{2(\gamma+1)}&+\int_{(t_i,t_{i+t})\times\Omega} |\nabla (|u_n|^{\gamma+1})|^2\\
&\le D\left( \|f\|_{L^q(\Omega_T)}^{2\gamma+2}+\sum_{j=1}^iD^j \|f\|_{L^q(\Omega_{T})}^{2\gamma+2}+ D^i \int _ \Omega |u_0|^{2\gamma+2}dx\right)\\
&\le  \sum_{j=1}^{i+1}D^j \|f\|_{L^q(\Omega_{T})}^{2\gamma+2}+ D^{i+1} \int _ \Omega |u_0|^{2\gamma+2}dx,
\end{split}
\]
and the claim follows by finite induction.\\
Summing up estimate \eqref{induzione} for each time interval $(t_i,t_{i+1})$, we deduce that 
\begin{equation}\label{induzione2}
\sup_{\tau\in ( 0 ,T)}\int_{\Omega}|\un({\tau})|^{2(\gamma+1)}+\int_{\Omega_T} |\nabla (|u_n|^{\gamma+1})|^2\le C( \|f\|_{L^q(\Omega_{T})}^{2\gamma+2}+ \int _ \Omega |u_0|^{2\gamma+2}dx),
\end{equation}
and, applying once more time Gagliardo-Nirenberg inequality, 
\begin{equation}\label{stimaustar}
\begin{split}
 \|\un\|_{L^{q^{\star\star}}(\Omega_T)}\le C \left(\|f\|_{L^q(\Omega_{T})}+\|u_0\|_{L^{q^{\star\star}\frac{N}{N+2}}(\Omega)}\right),
\end{split}
\end{equation}
for a suitable constant $C$.
Having at hand the two inequalities above, it is standard to deduce the estimate for the gradients (see for instance \cite{BDGO2}). Here we give just a quick sketch for the convenience of the reader:
\begin{equation*}
\begin{split}
\left(\int_{\Omega_T} |
\nabla u_n|^{q^{\star}}dxdt\right)^{\frac1{q^{\star}}} & \leq c \left(\int_{\Omega_T }
 |\nabla (|u_n|^{\gamma+1})|^2\right)^{\frac {1}2} \left( \int_{\Omega_T  }  |u_n| ^{q^{\star\star}}\right) ^{\frac1{q^{\star}}-\frac12}\\
& \le C\left(  \|f\|_{L^q(\Omega_{T})} +\|u_0\|_{L^{q^{\star\star}\frac{N}{N+2}}(\Omega)}   \right)^{\gamma+1+q^{\star\star}\left(\frac1{q^{\star}}-\frac12\right)}
\\
&=C\left(  \|f\|_{L^q(\Omega_{T})} +\|u_0\|_{L^{q^{\star\star}\frac{N}{N+2}}(\Omega)}   \right).
\end{split}
\end{equation*}

\end{proof}

\begin{lemma} \label{stimamarcin}
Let assumptions of Lemma \ref{Lemmauno} be in force with $q=1$. Then,  every solution $u_n \in  \LBoch \infty 0 T {2} \Omega\cap\Boch {2} 0 T 1 2 \Omega$ to problem \eqref{pn} satisfies the estimate
\[
\|u_n\|_{\mathcal M^{1^{\star\star}} (\Omega_T)}+\| \nabla u_n \|_{\mathcal M^{1^{\star}} (\Omega_T)} \le C\big( \|f\|_{L^1(\Omega_{T})},\|u_0\|_{L^{1^{\star\star}}(\Omega)}\big).
\]
\end{lemma}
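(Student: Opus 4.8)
The plan is to treat the borderline case $q=1$ as a limiting version of Lemma \ref{Lemmauno}, where the $L^{q^{\star\star}}$ and $L^{q^\star}$ estimates degrade into Marcinkiewicz estimates. The key conceptual point is that when $q=1$ the exponent $\gamma$ defined in \eqref{gammadef} becomes $\gamma=0$ (more precisely $2\gamma+2=1^{\star\star}\frac{N}{N+2}$ collapses to a value that no longer yields an integrable power), so the direct energy estimate on $|u_n|^{\gamma+1}$ that worked for $q>1$ is no longer available. Instead I would run the truncation argument of Lemma \ref{Lemmauno} at the level of $T_k(u_n)$ and extract, for each fixed $k$, a uniform bound on the energy of $T_k(u_n)$, then convert these level-set bounds into the Marcinkiewicz norms by optimizing over $k$.

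First I would test \eqref{pn} with $T_k(u_n)\chi_{(0,\tau)}$ (the natural $q=1$ analogue of $\psi_\varepsilon(u_n)$, i.e. taking $\gamma=0$ in the truncated test function), obtaining after the Young inequality on the drift term and using the $L^1$ bound \eqref{elleuno} together with the slicing-in-time device already developed in Lemma \ref{Lemmauno}, an estimate of the form
\[
\sup_{\tau\in(0,T)}\int_\Omega T_k(u_n)^2 \,dx + \int_{\Omega_T}|\nabla T_k(u_n)|^2 \,dx\,dt \le C\,k\big(\|f\|_{L^1(\Omega_T)}+\|u_0\|_{L^{1^{\star\star}}(\Omega)}\big).
\]
The time-slicing is essential here exactly as before, because the noncoercive drift contributes a factor $h^{2\theta}$ that must be absorbed on intervals of controlled length; I would reuse \eqref{sceltah} verbatim with the same finite-induction summation over the slices $(t_{i-1},t_i)$.

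Next I would convert this energy bound on truncations into the two Marcinkiewicz estimates. For the solution itself, I would apply Lemma \ref{GN} (Gagliardo-Nirenberg) to $T_k(u_n)$, yielding $\|T_k(u_n)\|_{L^\sigma(\Omega_T)}^\sigma \le C k^{1+2/N}$, and then bound the measure of the superlevel set $\{|u_n|>k\}$ by Chebyshev against $\|T_k(u_n)\|_{L^\sigma}$, which after optimization produces $k^{1^{\star\star}}\lambda_{u_n}(k)\le C$, i.e. the $\mathcal M^{1^{\star\star}}$ bound. For the gradient, I would use the standard interpolation-Chebyshev argument for truncated problems (cf. \cite{BDGO2}): split $\{|\nabla u_n|>\lambda\}$ into the part where $|u_n|\le k$ and the part where $|u_n|>k$, control the first piece by $\lambda^{-2}\int|\nabla T_k(u_n)|^2 \le C k \lambda^{-2}$ and the second by the already-established $\mathcal M^{1^{\star\star}}$ bound, then optimize the choice of $k=k(\lambda)$ to reach the exponent $1^\star$.

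\textbf{The main obstacle} I anticipate is controlling the drift term $\int_{\Omega_T}|E|^2 T_k(|u_n|)^{2(\theta+1)}$ uniformly in $k$, since in the $q=1$ case one cannot absorb it into an $L^{q^{\star\star}}$ norm of the same power as was done in \eqref{est3}; instead one must lean on the $L^1$-in-time bound $\|u_n\|_{L^1(\Omega_h)}\le h M_0$ from \eqref{8} together with the smallness of $h^{2\theta}$ from \eqref{sceltah} to absorb the drift slice by slice, and then track carefully that the resulting constant depends on $k$ only through the benign factor $k$ coming from $\int|f|\,T_k(u_n)\le k\|f\|_{L^1}$. Keeping this $k$-dependence linear (rather than allowing it to blow up faster) is precisely what makes the final optimization yield the Marcinkiewicz exponents $1^{\star\star}$ and $1^\star$ rather than weaker integrability.
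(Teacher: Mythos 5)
Your proposal follows essentially the same route as the paper: test with $T_k(u_n)\chi_{(0,\tau)}$, absorb the drift via the $L^1$ bound \eqref{8} and the time-slicing device of Lemma \ref{Lemmauno} to reach $\sup_\tau\int_\Omega |T_k(u_n)|^2+\int_{\Omega_T}|\nabla T_k(u_n)|^2\le Ck$, then convert to the $\mathcal M^{1^{\star\star}}$ bound by Gagliardo--Nirenberg and Chebyshev and to the $\mathcal M^{1^{\star}}$ gradient bound by the level-set splitting with the optimal choice $k=\lambda^{N/(N+1)}$. The only slip is in the motivation: for $q=1$ formula \eqref{gammadef} gives $\gamma=-1/2$ (so $2\gamma+2=1$), not $\gamma=0$; what you actually do, taking $\gamma=0$ in the test function, i.e.\ testing with $T_k(u_n)$, is exactly the paper's choice, so the argument itself is unaffected.
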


\begin{proof}
Given $k\ge0$, direct computation shows that
\[
\frac 1 {2} 
\left|T_k(z)\right|^{2}
\le
\Phi_k(z)
\le  k|z|, \quad \mbox{where} \quad \Phi_k(z):=\int_0^z 
T_k(\zeta) \dx \zeta.
\]
Therefore, taking $\tau \in (0,h)$ with $h\in(0,T)$ to be chosen in the sequel, and using  $\varphi=T_k(u_n)\chi_{0,\tau}$ as a test function in \eqref{pn}, it follows that
\begin{equation}\label{2.1}
\begin{split}
 \int_\Omega |T_k(u_n(x,\tau))|^2\dx x&
+
\int_{\Omega_{\tau}} |\nabla T_k(u_n)|\dx x\dx t
\\
&\le C_{\alpha}\left[\int_{\Omega_{\tau}}     |E||u_n|^{\theta+1} |\nabla T_k(u_n)|     \dx x\dx t+
k\int_\Omega |u_0(x)|\dx x
+
k\int_{\Omega_{\tau}} |f| \dx x\dx t\right].
\end{split}
\end{equation}
Recalling \eqref{8}, we estimate the first integral in the right hand side above as follows
\begin{equation*}
\begin{split}
\int_{\Omega_{\tau}} |E| |T_k(u_n)|^{\theta+1}|\nabla T_k(u_n)|   \dx x \dx \tau &\le  \|E\|_{L^r(\Om_{h})}\|u_n\|^{\theta}_{L^1(\Om_{h})}\|T_k(u_n)\|_{L^\sigma(\Om_{\tau})}\|\nabla T_k(u_n)\|_{L^2(\Om_{\tau})}\\
& \le (hM_0)^{\theta} \|E\|_{L^r(\Om_{h})}\|T_k(u_n)\|_{L^\sigma(\Om_{\tau})}\|\nabla T_k(u_n)\|_{L^2(\Om_{\tau})}.
\end{split}
\end{equation*}
Moreover, using the Gagliardo--Nirenberg inequality, we deduce
\begin{equation}\label{31bis}
\begin{split}
\int_{\Omega_t}  &  |E| |T_k(u_n)|^{\theta+1}|\nabla T_k(u_n)|      \dx x \dx \tau \\
&\le  C_1  (hM_0)^{\theta}    \|E\|_{L^r(\Om_{h})} 
(
\| T_k(u_n)\|_{L^\infty(0,T,L^2(\Om_{\tau}))}+
\|\nabla T_k(u_n)\|_{L^2(\Om_{\tau})}
)
\|\nabla T_k(u_n)\|_{L^2(\Om_{\tau})}     \\
&\le  C_2  (hM_0)^{\theta}    \|E\|_{L^r(\Om_{h})} 
(
\| T_k(u_n)\|^2_{L^\infty(0,T,L^2(\Om_{\tau}))}+\|\nabla T_k(u_n)\|^2_{L^2(\Om_{\tau})}).
\end{split}
\end{equation}
Let us take $h$ such that
\begin{equation}\label{smallcon}
 C_{\alpha}C_2  (hM_0)^{\theta}    \|E\|_{L^r(\Om_{T})} < \frac{\alpha}{2},
\end{equation}
and plug \eqref{31bis} into \eqref{2.1}. We conclude that
\begin{equation}\label{2.2bis}
\begin{split}
\sup_{0<\tau\le h}
 \int_{\Omega }       \left|T_k\left(u_n(x,\tau)\right)\right|^{ 2 } \dx x
+
\int_{\Omega_{h}} |\nabla T_k(u_n)|^2 \dx x\dx \tau
\le C_4 \left(
\|u_0\|_{L^1(\Om)} + \|f\|_{L^1(\Om_T)} 
\right)k.
\end{split}
\end{equation}
If condition \eqref{smallcon} is valid with $h\equiv T$ we are done. Otherwise we shall slide the interval $(0,T)$ as follows: take
\[
h=\left(\frac{\alpha}{2C_{\alpha}C_2\|E\|_{L^r(\Om_{T})}}\right)^{\frac{1}{\theta}}\frac{1}{M_0},
\]
and set $t_0=0$ and $t_{i}=t_{i-1}+h$ for all $i=1,2,\cdots, k$ where $k$ is the integer part of $T/h$; let us also set $t_{k+1}=T$. Testing \eqref{pn} with $T_k(\un)\chi_{t_i,t_{i+1}}$, we get 
\begin{equation*}
\begin{split}
\sup_{t_i<\tau\le t_i+1 }
 \int_{\Omega }       \left|T_k\left(u_n(x,\tau)\right)\right|^{ 2 } \dx x
+
\int_{t_i}^{t_{i+1}}\int_{\Omega} |\nabla T_k(u_n)|^2 \dx x\dx \tau
\le C_5 \left(
\|u_0\|_{L^1(\Om)} + \|f\|_{L^1(\Om_T)} 
\right)k.
\end{split}
\end{equation*}

Reasoning as in the proof of Lemma \ref{Lemmauno} (from \eqref{induzione} onward), we deduce that 
\begin{equation*}\label{2.2boot}
\begin{split}
\sup_{0<t<T}
 \int_{\Omega }       \left|T_k\left(u_n(t,x)\right)\right|^{ 2 } \dx x
+
\int_{\Omega_{T}} |\nabla T_k(u_n)|^2 \dx x\dx \tau
\le C_6 \left(
\|u_0\|_{L^1(\Om)} + \|f\|_{L^1(\Om_T)} 
\right)k.
\end{split}
\end{equation*}
It is well known that such an inequality the desired result (see for instance \cite{BG}). Indeed, using again Gagliardo-Nirenberg inequality, we obtain
\[
k^{2+\frac4N}|\{(t,x)\in\Omega_T \ : \ |\un|\ge k\}|\le \int_{\Omega_T}|T_k(\un)|^{2\frac{N+2}{N}}\le C k^{1+\frac2N}M_0^{1+\frac2N}.
\] 
Concerning the gradient estimate it follows that
\[
\begin{split}
|\{|\nabla \un|>\lambda\}|& \le |\{|\nabla \un|>\lambda, |\un|\ge k\}|+|\{|\nabla \un|>\lambda, |\un|\le k\}|\\
 &\le C\left(\frac{1}{k^{1+\frac2N}}+\frac{k}{\lambda^2}\right)= \frac{C}{\lambda^{\frac{N+2}{N+1}}},
\end{split}
\]
where the last equality follows from the choice $k=\lambda^{\frac{N}{N+1}}$.
\end{proof}

We are now in position to give the proof of Theorem \ref{exist2}. 
\begin{proof}[Proof of Theorem \ref{exist2}]

Let us focus on the case $(iii)$ of the Theorem, namely $q\in[\sigma',\frac{N+2}{2})$. Lemma \ref{Lemmauno} implies that there exists $u$ such that, up to a not relabelled subsequence,
\[
u_n\rightharpoonup u \qquad \mbox{ in }L^{2}(0,T,W_0^{1,2} (\Omega))\cap L^{q^{\star\star}}(\Omega_T).
\]
Moreover since
\[
\frac{1}{r}+\theta+\frac{1}{q^{\star\star}}\le \frac12
\]
we also have that the sequence $E_ng_n(\un)$ si bounded in $(L^2(\Omega_T))^{N}$. These two pieces of information imply that the sequence $\partial_t \un$ is bounded in $L^2(0,T, W^{-1,2}(\Omega))$, that in turn implies that $\un\to u$ in $L^1(\Omega_T)$ and $C([0,T);L^2(\Omega))$. To conclude that $u$ is a distributional solution, notice that, for every test function $\varphi\in  C^{\infty}([0,T]\times\Omega)$ and $0<t_1<t_2<T$ we have:
\begin{eqnarray*} 
\io \big(u(t_2)\varphi(t_2)-u(t_1)\varphi(t_1)\big)\dx x-
 \int_{t_1}^{t_2}\int_{\Omega} u\partial_t\varphi\\
+\int_{t_1}^{t_2}\int_{\Omega} \big(M\nabla u+E|u|^{\theta}u \big)\nabla\varphi\dx x \dx t=\int_{t_1}^{t_2}\int_{\Omega}  f (t,x) \varphi  \dx x \dx t.
\end{eqnarray*} 
Taking the limit as $n\to\infty$ we obtain the existence of a solution. Uniqueness follows by the comparison principle of Theorem \ref{comparison} and the regularity of $u$ (see Remark \ref{helpingremark}).\\

Let us consider now case $(i)$, that is $f\in L^1(\Omega_T)$ and $u_0\in L^1(\Omega)$. Notice that, thanks to the previous step, we can now solve problem \eqref{pn} with the choice $g_n(s)=|s|^{\theta}s$, namely truncating just $f$ and $u_0$. Therefore, naming, with a slight abuse of notation, the solutions of the corresponding problems again as $\un$, we can now use the comparison principle with $u_n$ and $u_m$ to get
\[
\int_{\Omega}|u_n(\tau)-u_m(\tau)|\le \|f_n-f_m\|_{L^1(\Omega_T)}+\|u_{0,n}-u_{0,m}\|_{\elle1},
\]
that implies that the sequence $u_n$ is Cauchy in $C([0,T], \elle1)$.
Moreover, using the estimate given by Lemma \ref{stimamarcin} for this sequence of $\un$, we deduce that there exists $u\in L^{m}(0,T,W_0^{1,m} (\Omega))$, for all $m<\frac{N+2}{N+1}$, such that, up to a (not relabeld) subsequence,
\begin{equation}\label{weakconv}
u_n\rightharpoonup u \qquad \mbox{ in }L^{m}(0,T,W_0^{1,m} (\Omega)) \quad \forall \ m<\frac{N+2}{N+1}.
\end{equation}
Moreover, also the sequence $|E_n||g_n(\un)|$ is bounded in $L^{m}(\Omega_T)$, again for all $m<\frac{N+2}{N+1}$. Indeed, since
assumption \eqref{closelin} implies that
\[
\frac1r+\theta+\frac{N}{N+2}=\frac{1}{N+2}+\frac{N}{N+2}=\frac{N+1}{N+2}>1,
\]
for any $m\in[1,\frac{N+2}{N+1})$ there exists $s\in [1,\frac{N+2}{N})$ such that
\[
\frac1r+\theta+\frac{1}{s}=\frac{1}{m}<1.
\]
Using H\"older inequality, we therefore obtain that
\[
\left(\int_{\Omega}\big(|E_n||\un|^{\theta+1}\big)^{m}\right)^{\frac1m}\le \|E\|_{L^{r}(\Omega_T)}\|\un\|_{L^1(\Omega_T)}\|\un\|_{L^{s}(\Omega_T)}.
\]

These two pieces of information imply that the sequence $\partial_t \un$ is bounded in $L^1(0,T, W^{-1,m}(\Omega))+ L^1(\Omega_T)$, with $m$ as above, and thanks to the compactness results of Aubin's type for non reflexive spaces (see Corollary 4 in \cite{Sim}), we deduce that 
\begin{equation*}
u_n\rightarrow u \qquad  a.e. \mbox{ in } \Omega_T.
\end{equation*}
This $a.e.$ convergence and the boundedness showed above imply that
\begin{equation}\label{driftcon}
E_ng_n(\un) \to E|u|^{\theta}u \quad \mbox{ in } (L^{m}(\Omega_T))^N
\end{equation}
for any $m<\frac{N+2}{N+1}$. \\
To show that $u$ is a distributional solution, let us go back to the equation solved by the $\un$. It follows that, for every test function $\varphi\in  C^\infty(\bar \Omega_T)$ such that ${\rm supp }\,\varphi \subset [0,T)\times \Omega$ we have:
\[
- \int_{\Omega_T} \un \varphi_t 
+
 \int_{\Omega_T} M(x) \nabla \un \cdot \nabla \varphi 
=  \int_{\Omega_T}  E_n
g_n(\un)  \nabla \varphi    +\int_\Omega u_0 \varphi (x,0)
+
 \int_{\Omega_T}
 f_n  \varphi 
\] 
The weak convergence in \eqref{weakconv} is enough to pass to the limit in the left hand side, while, to deal with the drift term we use \eqref{driftcon}. Taking the liminf as $n\to\infty$ in the estimates of Lemma \ref{stimamarcin}, we conclude this part of the proof.\\
The case $(ii)$ follows exactly in the same way with the only difference of using Lemma \ref{Lemmauno} instead of Lemma \ref{stimamarcin}. 
\end{proof}

\section{Proof of Theorem \ref{thmSte}}

To prove Theorem \ref{thmSte} we shall use a fixed point argument. To this aim, for a fixed $v\in L^{q^{\star\star}}(\Om_T)$ we introduce the following problem 

\begin{equation} \label{problem}
\begin{cases}
\partial_t u-\div(M(t,x)\nabla u)= -\div(E |v|^{\theta} v )+f(x) & \quad  \mbox{in } \Om_T,\\
u = 0 & \quad \mbox{on }  (0,T) \times \partial \Omega ,\\
u(x,0)=u_0(x) & \quad \mbox{in } \Omega.
\end{cases}
\end{equation}

We observe that previous problem admits an unique solution $u= \mathscr F (v) \in L^{q^{\star\star}}(\Om_T)\cap L^2(0,T,W^{1,2}_0(\Omega))$. Indeed, thanks to the linearity we can rewrite $u=u_1+u_2$ where $u_1$ solves the problem 
\begin{equation*} 
\begin{cases}
\partial_t u_1-\div(M(t,x)\nabla u_1)= -\div(E |v|^{\theta} v ) & \quad  \mbox{in } \Om_T,\\
u_1 = 0 & \quad \mbox{on }  (0,T) \times \partial \Omega,\\
u_1(x,0)=0 & \quad \mbox{in } \Omega.
\end{cases}
\end{equation*}   
In view of \eqref{farfromlin} we have $F=E |v|^{\theta} v \in L^{q^\star}(\Omega_T) $, than we can apply  \cite{BDGO2}, Remark 2.6; moreover such solution $u_1$ is unique since in our assumption $q^{\star}\geq 2.$  
On the other hand $u_2$ solves the problem
\begin{equation*} 
\begin{cases}
\partial_t u_2-\div(M(t,x)\nabla u_2)= f(x) & \quad  \mbox{in } \Om_T,\\
u_2 = 0 & \quad \mbox{on }  (0,T) \times \partial \Omega,\\
u_2(x,0)=u_0(x) & \quad \mbox{in } \Omega.
\end{cases}
\end{equation*}
Here we can use previous Theorem \ref{exist2} $(iii)$ with $E=0.$

So, we consider the map 
\begin{equation}\label{mappa}
\mathscr F:L^{q^{\star\star}}(\Om_T) \to  L^{q^{\star\star}}(\Om_T)
\end{equation}
 that associates to  any $v\in L^{q^{\star\star}}(\Om_T)$  the unique solution $u= \mathscr F (v) \in L^{q^{\star\star}}(\Om_T)\cap L^2(0,T,W^{1,2}_0(\Omega))$ of Problem \ref{problem}.

\begin{lemma}\label{stimafrozen}
The unique solution to \eqref{problem} satisfies
\be\label{stimalk}
\|G_l(u)\|_{L^{q^{\star\star}}(\Om_T)}\le  C_{\alpha,q} \left[\mathcal{C}_{E,l}\|v\|_{L^{q^{\star\star}}(\Om_T)}^{\theta+1}+ \mathcal{C}_{f,l}+ \|u_0\|_{L^{q^{\star\star}\frac{N}{N+2}}(\Omega)} \right]\quad \forall \ 0\le l,
\ee
where  $\mathcal{C}_{f,l}=(\int_{\Om_T\cap\{|u|\ge l\}}|f|^q)^\frac1q$ and $\mathcal{C}_{E,l}=(\int_{\Om_T\cap\{|u|>l\}}|E|^r)^\frac1r$.
\end{lemma}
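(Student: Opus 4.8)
The plan is to reproduce the test-function scheme of Lemma~\ref{Lemmauno}, replacing the truncation from above $T_k(u_n)$ by the truncation from below $G_l(u)$, and replacing the self-consistent drift $|u|^\theta u$ by the frozen datum $|v|^\theta v$. Let $\gamma$ be the exponent \eqref{gammadef} associated to the given $q$, so that \eqref{gammaproperties} holds; in the range of Theorem~\ref{thmSte} one has $\gamma\ge 2$, hence every power appearing below is positive. First I would test \eqref{problem} with $\psi_\varepsilon(G_l(u))\chi_{(0,\tau)}$, where $\psi_\varepsilon$ is exactly the globally Lipschitz power function used in Lemma~\ref{Lemmauno} (built from $(T_{1/\varepsilon}(|\cdot|)+\varepsilon)^{2\gamma+1}$) and $\Psi_\varepsilon$ is its primitive. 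Since $u\in L^2(0,T;W^{1,2}_0(\Omega))$ with $\partial_t u$ in the appropriate dual (coming from the splitting $u=u_1+u_2$ used to define $\mathscr F$), this is an admissible test function; crucially, its gradient is supported in $\{|u|>l\}$, which is precisely what makes the level-set constants $\mathcal C_{E,l}$ and $\mathcal C_{f,l}$ appear.

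Handling the parabolic term through $\Psi_\varepsilon$, using the ellipticity \eqref{mcon} on the diffusion, and Young's inequality on the drift, I would obtain, after letting $\varepsilon\to0$ (Fatou on the left, dominated convergence on the right, exactly as between \eqref{est1} and \eqref{est2bis}), the energy inequality
\[
\sup_{\tau\in(0,T)}\io |G_l(u)(\tau)|^{2\gamma+2}+\int_{\Om_T}|\D(|G_l(u)|^{\gamma+1})|^2 \le C\Big[\mathcal I_E+\mathcal I_f+\io|u_0|^{2\gamma+2}\Big],
\]
where $\mathcal I_E=\int_{\{|u|>l\}}|E|^2|v|^{2(\theta+1)}|G_l(u)|^{2\gamma}$ and $\mathcal I_f=\int_{\{|u|\ge l\}}|f|\,|G_l(u)|^{2\gamma+1}$. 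Applying the Gagliardo--Nirenberg inequality of Lemma~\ref{GN} to $\varphi=|G_l(u)|^{\gamma+1}$ and using $\sigma(\gamma+1)=q^{\star\star}$ together with $2\gamma+2=q^{\star\star}\frac{N}{N+2}$ (both from \eqref{gammaproperties}) converts the left-hand side into a power of $\|G_l(u)\|_{L^{q^{\star\star}}(\Om_T)}$, giving
\[
\|G_l(u)\|_{L^{q^{\star\star}}(\Om_T)}^{2\gamma+2}\le C\big(\mathcal I_E+\mathcal I_f+\|u_0\|_{L^{q^{\star\star}\frac N{N+2}}(\Omega)}^{2\gamma+2}\big).
\]

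The core computation is then the estimate of $\mathcal I_E$ and $\mathcal I_f$ by Hölder's inequality. Writing $X:=\|G_l(u)\|_{L^{q^{\star\star}}(\Om_T)}$, for the drift I would split $|E|^2|v|^{2(\theta+1)}|G_l(u)|^{2\gamma}$ with the three exponents $r/2$, $q^{\star\star}/(2(\theta+1))$ and $q^{\star\star}/(2\gamma)$; the identity \eqref{farfromlin} together with $\frac{2\gamma+2}{q^{\star\star}}=\frac N{N+2}$ guarantees that the reciprocals sum to exactly $1$, yielding $\mathcal I_E\le \mathcal C_{E,l}^2\,\|v\|_{L^{q^{\star\star}}(\Om_T)}^{2(\theta+1)}\,X^{2\gamma}$. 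For the source, Hölder with exponents $q,q'$ and the relation $q'(2\gamma+1)=q^{\star\star}$ from \eqref{gammaproperties} give $\mathcal I_f\le \mathcal C_{f,l}\,X^{2\gamma+1}$. Substituting, $X$ satisfies the polynomial inequality
\[
X^{2\gamma+2}\le C\big(A^2X^{2\gamma}+\mathcal C_{f,l}X^{2\gamma+1}+\|u_0\|_{L^{q^{\star\star}\frac N{N+2}}(\Omega)}^{2\gamma+2}\big),\qquad A:=\mathcal C_{E,l}\|v\|_{L^{q^{\star\star}}(\Om_T)}^{\theta+1}.
\]
Since the powers $2\gamma$ and $2\gamma+1$ are strictly below $2\gamma+2$, two applications of Young's inequality (with exponents $\tfrac{\gamma+1}{\gamma},\gamma+1$ and $\tfrac{2\gamma+2}{2\gamma+1},2\gamma+2$) absorb $\tfrac12X^{2\gamma+2}$ to the left and leave $X^{2\gamma+2}\le C(A+\mathcal C_{f,l}+\|u_0\|_{\dots})^{2\gamma+2}$; taking the $(2\gamma+2)$-th root gives \eqref{stimalk}.

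I expect the only genuinely delicate point to be the rigorous handling of the regularization, namely the double limit (first $\varepsilon\to0$, and, if one introduces the inner truncation $T_k$, then $k\to\infty$) and the justification of the integration by parts in time through $\Psi_\varepsilon$; this is routine given the templates of Lemma~\ref{Lemmauno} and Lemma~\ref{19:03}. I want to stress one structural simplification relative to Lemma~\ref{Lemmauno}: because the drift is frozen in $v$, the drift contribution carries the strictly subcritical power $X^{2\gamma}$ (rather than the critical $X^{2\gamma+2}$ produced by the self-consistent nonlinearity), so it is absorbed directly by Young's inequality and no subdivision of $(0,T)$ into small time slices is needed.
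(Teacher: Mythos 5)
Your proof is correct and follows essentially the same route as the paper: test with (a truncation of) $|G_l(u)|^{2\gamma}G_l(u)$, derive the energy inequality, apply Gagliardo--Nirenberg, balance the H\"older exponents via \eqref{farfromlin} and \eqref{gammaproperties}, and absorb by Young --- the paper works directly with $T_k(G_l(u))$ and the unregularized power (admissible here since $2\gamma+1\ge 1$), so your $\varepsilon$-regularization is harmless but unnecessary. One small slip: in the range $q\in[2\tfrac{N+2}{N+4},\tfrac{N+2}{2})$ one has $\gamma\ge 0$ (with $\gamma=0$ at the left endpoint), not $\gamma\ge 2$, but this does not affect the argument.
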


\begin{proof}{Proof of Lemma \ref{stimafrozen}}
Let us set $u_{k,l}=T_k(G_l(u))$ and take $\frac{1}{2\gamma+1}|u_{k,l}|^{2\gamma}u_{k,l} \, \chi_{(0,\tau)}$, where $\gamma$ is defined in \eqref{gammadef}, 
as a test function in the weak formulation of \eqref{problem}. We stress that in this case $\gamma>0$. We get
\be\label{30-10}
\begin{split}
\frac{1}{2(\gamma+1)(2\gamma+1)}\sup_{\tau\in(0,T)}\io |u_{k,l}(\tau)|^{2\gamma+2}+  \alpha\iqT |\nabla u_{k,l}|^{2}|u_{k,l}|^{2\gamma}\\ \le \iqT |E||v|^{\theta+1}|\nabla u_{k,l}||u_{k,l}|^{2\gamma}+\frac{1}{2\gamma+1}\iqT|f||u_{k,l}|^{2\gamma+1}+\io |u_0|^{2\gamma+2}.
\end{split}
\ee
Let us estimate the first two integrals on the right hand side above: At first notice that
\be\label{cip}
\begin{split}
\iqT |E||v|^{\theta+1}|\nabla u_{k,l}||u_{k,l}|^{\gamma}|u_{k,l}|^{\gamma} & \le  \mathcal{C}_{E,l}\|v\|_{L^{q^{\star\star}}(\Om_T)}^{\theta+1}\||\nabla u_{k,l}||u_{k,l}|^{\gamma}\|_{L^{2}(\Om_T)}\|u_{k,l}\|_{L^{q^{\star\star}}(\Om_T)}^{\gamma}\\
& \le  \frac{\mathcal{C}_{E,l}^2}{2\alpha}\|v\|_{L^{q^{\star\star}}(\Om_T)}^{2(\theta+1)}\|u_{k,l}\|_{L^{q^{\star\star}}(\Om_T)}^{2\gamma}+\frac{\alpha}{2}\||\nabla u_{k,l}||w|^{\gamma}\|_{L^{2}(\Om_T)}^2\\
& \le  A_{\epsilon,\alpha,q}  \mathcal{C}_{E,l}^{2\gamma+2}\|v\|_{L^{q^{\star\star}}(\Om_T)}^{(\theta+1)(2\gamma+2)}+\epsilon\|u_{k,l}\|_{L^{q^{\star\star}}(\Om_T)}^{2\gamma+2}+\frac{\alpha}{2}\||\nabla u_{k,l}||w|^{\gamma}\|_{L^{2}(\Om_T)}^2
\end{split}
\ee
where $\mathcal{C}_{E,l}=(\int_{\Om_T\cap\{|u_{k,l}|>l\}}|E|^r)^\frac1r$, we have used H\"older inequality with exponents
\[
1=\frac1r+\frac{\theta+1}{q^{\star\star}}+\frac12+\frac{\gamma}{q^{\star\star}},
\]
and, at last, Young inequality. For the second integral we have
\be\label{ciop}
\frac{1}{2\gamma+1}\iqT|f||u_{k,l}|^{2\gamma+1}\le A_{q} \mathcal{C}_{f,l}\|u_{k,l}\|_{L^{q^{\star\star}}(\Om_T)}^{2\gamma+1}\le A_{q,\epsilon}  \mathcal{C}_{f,l}^{2\gamma+2}+\epsilon \|u_{k,l}\|_{L^{q^{\star\star}}(\Om_T)}^{2\gamma+2}
\ee
where $\mathcal{C}_{f,l}=(\int_{\Om_T\cap\{|u_{k,l}|\ge l\}}|f|^q)^\frac1q$ and we have used the identity
\[
(2\gamma+1)q'=q^{\star\star}.
\]
Plugging estimates \eqref{cip} and \eqref{ciop} in \eqref{30-10}, we deduce that
\[
\begin{split}
\sup_{\tau\in(0,T)}\io |u_{k,l}(\tau)|^{2\gamma+2}+ \iqT |\nabla u_{k,l}|^{2}|w|^{2\gamma}\\
 \le C_{\alpha,q} \left[\mathcal{C}_{E,l}\|v\|_{L^{q^{\star\star}}(\Om_T)}+ \mathcal{C}_{f,l}+\epsilon\|u_{k,l}\|_{L^{q^{\star\star}}(\Om_T)}+\|u_0\|_{L^{q^{\star\star}\frac{N}{N+2}}(\Omega)}  \right]^{2\gamma+2}.
\end{split}
\]
Using Gagliardo-Nirenberg inequality, it follows that
\[
\|u_{k,l}\|_{L^{q^{\star\star}}(\Om_T)}\le C_{\alpha,q} \left[\mathcal{C}_{E,l}\|v\|_{L^{q^{\star\star}}(\Om_T)}+ \mathcal{C}_{f,l}+\epsilon\|u_{k,l}\|_{L^{q^{\star\star}}(\Om_T)}+\|u_0\|_{L^{q^{\star\star}\frac{N}{N+2}}(\Omega)}  \right],
\]
Choosing $\epsilon$ small enough and taking the limit as $k\to \infty$, we obtain \eqref{stimalk}.
\end{proof}

We shall need the following property.
\begin{lemma}\label{auxiliaryresult}
Take $\delta,K>0$ and $\theta>0$. If 
\[
K\le K_{\delta}=\left(\frac{1}{\delta(\theta+1)}\right)^{\frac{1}{\theta}}\frac{\theta}{\theta+1},
\]
then, for any $s\in [0,R)$ with $R= (\delta(\theta+1))^{-1/ \theta }$, it follows that
\be
\delta s^{\theta+1}+K\le R.
\ee
\end{lemma}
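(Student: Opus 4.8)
The plan is to exploit monotonicity to reduce the inequality to a single endpoint estimate, and then verify that estimate through one algebraic identity. First I would observe that, since $\delta>0$ and $\theta+1>0$, the function $g(s):=\delta s^{\theta+1}+K$ is nondecreasing on $[0,R)$. Hence its supremum over $[0,R)$ is controlled by the value at the right endpoint, so that $g(s)\le \delta R^{\theta+1}+K$ for every $s\in[0,R)$. This collapses the whole claim to checking the single inequality $\delta R^{\theta+1}+K\le R$.

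The key step is the algebraic identity for $\delta R^{\theta+1}$. Writing $R=(\delta(\theta+1))^{-1/\theta}$, I would factor out one power of $\delta(\theta+1)$ and compute
\[
\delta R^{\theta+1}=\delta\,(\delta(\theta+1))^{-\frac{\theta+1}{\theta}}=\frac{1}{\theta+1}\,(\delta(\theta+1))^{1-\frac{\theta+1}{\theta}}=\frac{1}{\theta+1}\,(\delta(\theta+1))^{-\frac{1}{\theta}}=\frac{R}{\theta+1},
\]
using $1-\frac{\theta+1}{\theta}=-\frac1\theta$. Thus the endpoint bound reads $g(R)=\frac{R}{\theta+1}+K$.

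It then remains only to insert the hypothesis on $K$. Since
\[
K\le K_{\delta}=\left(\frac{1}{\delta(\theta+1)}\right)^{1/\theta}\frac{\theta}{\theta+1}=\frac{\theta}{\theta+1}\,R,
\]
I conclude that $g(s)\le \frac{R}{\theta+1}+\frac{\theta}{\theta+1}\,R=R$ for every $s\in[0,R)$, which is the desired inequality. I do not expect any genuine obstacle here: this is a \emph{calibration} lemma, and the threshold $K_\delta$ is defined precisely so that the endpoint inequality becomes an equality at $K=K_\delta$. The only point deserving a little care is the exponent arithmetic in the identity $\delta R^{\theta+1}=R/(\theta+1)$; once that is secured, monotonicity and the definition of $K_\delta$ close the argument immediately.
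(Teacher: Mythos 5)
Your proof is correct and follows essentially the same route as the paper's: bound $\delta s^{\theta+1}+K$ by its value with $s=R$ and $K=K_\delta$, then verify via the exponent identity $\delta R^{\theta+1}=\frac{R}{\theta+1}$ and $K_\delta=\frac{\theta}{\theta+1}R$ that this equals $R$. The only difference is presentational: you isolate the identity $\delta R^{\theta+1}=R/(\theta+1)$ as a separate step, whereas the paper computes the sum $\delta R^{\theta+1}+K_\delta$ in one chain.
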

\begin{proof}
 For $s\in(0,R)$ and $K\le K_{\delta}$ we clearly have that $\delta s^{\theta+1}+K\le \delta R^{\theta+1}+K_{\delta}$. Using the definition of $R$ and $K_{\delta}$ it follows that
\[
\begin{split}
\delta R^{\theta+1}+K_{\delta}=&\delta \left(\frac{1}{\delta(\theta+1)}\right)^{1+\frac1{\theta}}+\left(\frac{1}{\delta(\theta+1)}\right)^{\frac{1}{\theta}}\frac{\theta}{\theta+1}\\
=&\left(\frac{1}{\delta(\theta+1)}\right)^{\frac{1}{\theta}}\left(\frac{1}{\theta+1}+\frac{\theta}{\theta+1}\right)=R
\end{split}
\]
\end{proof}

We conclude with the following proof.

\begin{proof}[Proof of Theorem \ref{thmSte}]
We apply Shauder's fixed point Theorem to the map defined in \eqref{mappa}.

\textbf{Existence of an invariant ball} Having in mind estimate \eqref{stimalk}, let us set now
\[
\delta= C_{\alpha,q}\|E\|_{L^{r}(\Om_T)} \ \ \ \mbox{and} \ \ \ K= C_{\alpha,q}\left[\|f\|_{L^{q}(\Om_T)} + \|u_0\|_{L^{q^{\star\star}\frac{N}{N+2}}(\Omega)}\right],
\]
so that, using the same notation of Lemma \ref{auxiliaryresult},
\[
R=(C_{\alpha,q}\|E\|_{L^{r}(\Om_T)}(\theta+1))^{-1/ \theta }.
\]
Under assumption \eqref{smallness}, we can apply Lemma \ref{auxiliaryresult} to conclude that, 
\[
\|v\|_{L^{q^{\star\star}}(\Om_T)}\le R \Rightarrow \|u\|_{L^{q^{\star\star}}(\Om_T)}\le \delta \|v\|_{L^{q^{\star\star}}(\Om_T)}^{\theta+1}+ K\le  R
\]
namely
\[
 \mathscr F (B_{R})\subset B_{R}.
\]

\textbf{Continuity  of the map $ \mathscr F$} Assume $v_n\to v$ in $L^{q^{\star\star}}(\Om_T)$ and set $u_n= \mathscr F(v_n)$ and $u= \mathscr F(v)$. The difference $z_n=u_n-u$ solves
\[
\begin{cases}
\partial_t z_n-\div(M\nabla z_n)= -\div\left[E (|v_n|^{\theta}v_n- |v|^{\theta } v)\right]& \quad  \mbox{in } \Om_T,\\
z_n= 0 & \quad \mbox{on }  (0,T) \times \partial \Omega ,\\
z_n(x,0)=0 & \quad \mbox{in } \Omega.
\end{cases}
\]
Therefore, thanks to Step 1, we have that $z_n$ satisfies the following a priori estimate
\[
\|z_n\|_{L^{q^{\star\star}}(\Om_T)}\le C_{\alpha,q} \left( \iqT   (|v_n|^{\theta} v_n-|v|^{\theta } v)^{\frac{q^{\star\star}}{\theta+1}}  \right)^{\frac{\theta+1}{q^{\star\star}}}.
\]
Since, up to a subsequence, the dominated convergence theorem implies that the right hand side above tends to zero, we have that $z_n$ goes to zero in $L^{q^{\star\star}}(\Om_T)$. Being the argument independent of the considered subsequence, the continuity of the map $v\to  \mathscr F(v)$ follows.\\

\textbf{Compactness} Let us now take a sequence $v_n$ that is bounded in $L^{q^{\star\star}}(\Om_T)$ and set, as before, $u_n= \mathscr F (v_n)$. In order to show that the map $v\to  \mathscr F(v)$ is compact, we shall prove that there exists a not relabeled subsequence of $u_n$ that strongly converges in $L^{q^{\star\star}}(\Om_T)$.\\
Let us recall that $\un\in L^2(0,T,W^{1,2}_0(\Omega))$ satisfies in weak sense.
\[
\partial_t u_n-\div(M(t,x)\nabla u_n)= -\div(E(t,x) |v_n|^{\theta} v_n)+f(t,x),
\]
with $-\div(E(t,x) |v_n|^{\theta} v_n)+f(t,x)$ uniformly bounded in $L^2(0,T,W^{-1,2}(\Omega))$.
This implies that the sequence $u_n$ is bounded in the space 
$
W_2(0,T)$
and, by the Aubin-Lions' compactness result, we deduce that, up to a subsequence, $\un$ strongly converge to some $\zeta$ in $L^2(\Om_T)$ and $\un\to \zeta$ a.e. in $\Om_T$. Using \eqref{stimalk} with $v_n$, we deduce that
\[
\begin{split}
\|G_l(u_n)\|_{L^{q^{\star\star}}(\Om_T)}\le&  C_{\alpha,q} \left[\left(\int_{\Om_T\cap\{|u_n|>l\}}|E|^r\right)^\frac1r\|v_n\|_{L^{q^{\star\star}}(\Om_T)}^{\theta+1}+ \left(\int_{\Om_T\cap\{|u_n|>l\}}|f|^q\right)^\frac1q \right]\\
\le & \tilde C\left[\left(\int_{\Om_T\cap\{|u_n|>l\}}|E|^r\right)^\frac1r+ \left(\int_{\Om_T\cap\{|u_n|>l\}}|f|^q\right)^\frac1q \right]=\omega(l).
\end{split}
\]
Since $\un$ strongly converges in $L^2(\Om_T)$, we have that $\omega(l)$ goes to zero as $l$ diverges. Therefore, for any measurable set $\mathcal E\subset \Om_T$,
\[
\int_{\mathcal E}|u_n|^{q^{\star\star}}\le \int_{\mathcal E}|T_l(u_n)|^{q^{\star\star}}+\io |G_l(u_n)|^{q^{\star\star}}\le l^{q^{\star\star}}|\mathcal E|+\omega^{q^{\star\star}}(l).
\]
This implies that the sequence $|u_n|^{q^{\star\star}}$ is uniformly equi-integrable: indeed, for any $\epsilon>0$, there exists $l$ large enough such that $\omega(l)^{q^{\star\star}}\le \frac{\epsilon}2$; setting $\delta= \frac{\epsilon}{2l^{q^{**}}}$ we have that
\[
\int_{\mathcal E}|u_n|^{q^{**}}\le \epsilon \ \ \ \forall \mathcal E \ : \ |\mathcal E|<\delta, \ \forall n\in\mathbb{N} .
\]
Since $\un\to \zeta$ a.e. in $\Om_T$, Vitali Theorem assures us that $u_n\to \zeta$ in $L^{q^{\star\star}}(\Om_T)$.\\

Therefore, Shauder's fixed point Theorem provides existence of a solution.Uniqueness follows by Theorem \ref{comparison}.
\end{proof}

\section*{Acknowledgments}
S.B. has been supported by the Austrian Science Fund (FWF) project 10.55776/ESP9.   F.F. has been supported  by PRIN Project 2022HKBF5C - PNRR Italia
Domani, funded by EU Program NextGenerationEU.
G. Z. has been supported by  Progetto FRA 2022 \lq\lq Groundwork and OptimizAtion Problems in Transport” from the University of Naples Federico II. 
The authors are members of Gruppo Nazionale per l'Analisi Matematica, la Probabilit\`a e le loro Applicazioni (GNAMPA) of INdAM, and have been  supported by the GNAMPA-INdAM Projects 2025  (CUP E5324001950001).


\begin{thebibliography}{99}

\bibitem{fessel} {\sc F. Achhoud,} \textit{Parabolic problems with slightly superlinear convection terms}, preprint

\bibitem{AS} {\sc D. G. Aronson, J. Serrin,} \textit{
Local behavior of solutions of quasilinear parabolic equations}
Arch. Rational Mech. Anal.25 (1967), 81-122.




\bibitem{BOMbis} {\sc M.F. Betta, O. Guib\'e, A. Mercaldo,} {\it Uniqueness for Neumann problems for nonlinear elliptic equations,}
Commun. Pure Appl. Anal. 18 (2019), 1023-1048.

\bibitem{oned2} {\sc S. Bianchini, G. M. Leccese,} {\it Existence and blow-up for non-autonomous scalar conservation
laws with viscosity.}  J. Math. Anal. Appl. 515 (2022), 126361.

\bibitem{lucio}{\sc L. Boccardo,} {\it Some developments on Dirichlet problems with discontinuous coefficients}, Boll. Unione Mat.
Ital. 2 (2009),  285--297.

\bibitem{BBC} {\sc L. Boccardo, S. Buccheri, R. Cirmi,} \textit{Elliptic problems with superlinear convection terms} Journal of Differential Equations 406 (2024) 275-301.



\bibitem{BG} {\sc L. Boccardo, T. Gallouët,} \textit{Non linear Elliptic and Parabolic Equations
Involving Measure Data } J.  Func. Analysis 87 (1989), 149-169.



\bibitem{BDGO2} {\sc L. Boccardo, A. Dall'Aglio, T. Gallouët, L. Orsina} \textit{Existence and regularity results for some nonlinear parabolic equations.} Adv. Math. Sci. Appl. 9 (1999), no. 2, 1017-1031.

\bibitem{BGDM} {\sc L. Boccardo, D. Giachetti, J.I. Diaz, F. Murat} {\it Existence and regularity of renormalized solutions for some elliptic problems involving derivatives of nonlinear term} J. Differ. Equ. 106 (1993), 215--237.


\bibitem{bop} {\sc L. Boccardo, L. Orsina, A. Porretta},  \textit{Some noncoercive parabolic equations with lower order terms
in divergence form} Journal of Evolution Equations (2003) 407 - 418.

\bibitem{BOP} {\sc L. Boccardo, L., Orsina, M. M. Porzio,} \textit{Regularity results and asymptotic bahavior for a noncoercive parabolic problem}, J. Evol. Equ, (2021),  2195-2211

\bibitem{BZ} {\sc H. Brezis, T. Cazenave,} \textit{A nonlinear heat equation with singular initial data},
Journal d’Anal. Math., 68 (1996), 277-304.

\bibitem{buccheri} {\sc S. Buccheri,} {\it Gradient estimates for nonlinear elliptic equations with first order terms}, Manuscripta Math.
165 (2021), 191-225.


\bibitem{CCLR} {\sc J. A. Cañizo, J. A. Carrillo, P. Laurençot, J. Rosado}, \textit{The Fokker-Planck equation for
bosons in 2D: well-posedness and asymptotic behavior.} Nonlinear Anal. 137 (2016), 291-305.

\bibitem{CdFT} {\sc J. A. Carrillo, M. Di Francesco, G. Toscani}, \textit{Condensation phenomena in nonlinear drift
equations}, Ann. Sc. Norm. Super. Pisa Cl. Sci. 15 (2016), 145--171.

\bibitem{cirmibis} {\sc G. R. Cirmi, S. D’Asero, S. Leonardi,} {\it Morrey estimates for a class of elliptic equations with drift term,}
Adv. Nonlinear Anal. 9 (2020), 1333-1350.

\bibitem{daSilva} {\sc G. da Silva, Genival,} {\it Quasilinear elliptic equations with superlinear convection}, J. Math. Anal. Appl. 554,
  (2026), Paper No. 130005, 15.

 \bibitem{delvecchio} T. Del Vecchio, M. R. Posteraro, An existence result for nonlinear and noncoercive problems, Nonlinear Anal.
    31 (1998), 191--206. 

\bibitem{who} {\sc E. DiBenedetto,  }  \textit{Degenerate parabolic equations}. Springer--Verlag, New York, 1993.





      \bibitem{nodea} {\sc F.~Farroni, L.~Greco, G.~Moscariello, G.~Zecca} \newblock \lq\lq Nonlinear evolution problems with singular coefficients in the lower order terms\rq\rq,   Nonlinear Differential Equations and Applications, (NoDeA) 2021, 28 (4), 38
     

 

      \bibitem{anona} {\sc F.~Farroni, L.~Greco, G.~Moscariello, G.~Zecca} \newblock \lq\lq   Noncoercive parabolic obstacle problems        \rq\rq,  Advances in Nonlinear Analysis, 2023; 12: 20220322
      
      

      
      \bibitem{FGMZcv} {\sc F.~Farroni, L.~Greco, G.~Moscariello, G.~Zecca} \newblock \lq\lq Noncoercive quasilinear elliptic operators with singular lower order terms\rq\rq.   {Calc. Var. and Partial Differential Equations, }  60 (3), 83,  April 2021

      \bibitem{FAMO18}

{\sc F.~Farroni,  G.~Moscariello} \newblock \lq\lq 
A nonlinear parabolic equation with drift term.
\rq\rq.   {Nonlinear Anal. }  
177 (2018), part B, 397–412.

\bibitem{oned1} {\sc P.L. Guidolin, L. Sch\"utz, J.S. Ziebell, J.P. Zingano},  \textit{Global existence results for solutions of general conservative
advection-diffusion equations in $\mathbb{R}$.} J. Math. Anal. Appl. 515 (2022) 126361
 

\bibitem{Hop} {\sc K. Hopf },  \textit{Singularities in $L^1$-supercritical Fokker-Planck equations: A qualitative analysis.}
Ann. Inst. H.Poincaré Anal. Non Lin. (2024) 357-403




\bibitem{Li} {\sc J. L. Lions, } \textit{Quelques méthodes de resolution des problèmes aux limites non linéaires.} Dunod, Gauthier-
Villars, Paris 1969.

\bibitem{KQ1} {\sc G. Kaniadakis, P. Quarati,} \textit{Kinetic equation for classical particles obeying an exclusion
principle.} Phys. Rev. E 48 (1993), 4263-4270.

\bibitem{KQ2} {\sc G. Kaniadakis, P. Quarati,} \textit{Classical model of bosons and fermions} 	Phys. Rev. E 49 (1994), 5103-5110.

\bibitem{MP} {\sc M. Magliocca, A. Porretta,} \textit{Local and global time decay for parabolic equations
with super linear first-order terms}
Proc. London Math. Soc. (3) 118 (2019) 473-512.

\bibitem{Mos} {\sc G. Moscariello,} {\it Existence and uniqueness for elliptic equations with lower-order
   terms}, {Adv. Calc. Var.} 4 (2011) 421-444.



\bibitem{otto} {\sc F. Otto,} \textit{$L^1$-contraction and uniqueness for quasilinear
   elliptic-parabolic equations}, {J. Differential Equations} 131 (1996) {20--38}.

 
 
 \bibitem{porr1999} {\sc Porretta,} {\it A. Existence results for nonlinear parabolic equations via strong convergence of truncations,} Annali di Matematica pura ed applicata 177 (1999), 143-172 
 
 
\bibitem{Porr2001}  {\sc A. Porretta,} \textit{Local existence and uniqueness of weak solutions for nonlinear parabolic equations with
superlinear growth and unbounded initial data}, Adv. Differential Equations 6 (2001) 73-128
 
\bibitem{por} {\sc A. Porretta,} \textit{Weak solutions to Fokker-Planck equations and mean field games,} Arch. Ration. Mech. Anal. 216(1) (2015) 1-62.

\bibitem{porzio1} {\sc M. M. Porzio,} {\it Existence, uniqueness and behavior of solutions for a class of nonlinear parabolic problems,} Nonlin. Analysis 74 (2011), 5359-5382.

\bibitem{porzio2} {\sc M. M. Porzio,} {\it On uniform and decay estimates for unbounded solutions of partial differential equations,} J. Differ. Equ. 12 (2015), 6960-7011.





\bibitem{Sim} {\sc J. Simon}, \textit{Compact sets in the space }$L^p(0,T,B)$ Ann. Mat. Pura Appl. 146 (1987), 65-96.

\bibitem{toscani}  {\sc G. Toscani}, \textit{Finite time blow up in Kaniadakis-Quarati model of Bose-Einstein particles.}
Comm. Partial Differ. Equ. 37 (2012), 77-87.
\end{thebibliography}
\end{document}